\let\old@tocline\@tocline
\let\section@tocline\@tocline
\newcommand{\subsection@dotsep}{4.5}
\newcommand{\subsubsection@dotsep}{4.5}
     \leaders\hbox{$\m@th
        \mkern \subsection@dotsep mu\hbox{.}\mkern \subsection@dotsep mu$}\hfill
\let\subsection@tocline\@tocline
\let\@tocline\old@tocline
     \leaders\hbox{$\m@th
        \mkern \subsubsection@dotsep mu\hbox{.}\mkern \subsubsection@dotsep mu$}\hfill
\let\subsubsection@tocline\@tocline
\let\@tocline\old@tocline
\let\old@l@subsection\l@subsection
\let\old@l@subsubsection\l@subsubsection
\def\@tocwriteb#1#2#3{%
  \begingroup
    \@xp\def\csname #2@tocline\endcsname##1##2##3##4##5##6{%
      \ifnum##1>\c@tocdepth
      \else \sbox\z@{##5\let\indentlabel\@tochangmeasure##6}\fi}%
    \csname l@#2\endcsname{#1{\csname#2name\endcsname}{\@secnumber}{}}%
  \endgroup
  \addcontentsline{toc}{#2}%
    {\protect#1{\csname#2name\endcsname}{\@secnumber}{#3}}}%
\newlength{\@tocsectionindent}
\newlength{\@tocsubsectionindent}
\newlength{\@tocsubsubsectionindent}
\newlength{\@tocsectionnumwidth}
\newlength{\@tocsubsectionnumwidth}
\newlength{\@tocsubsubsectionnumwidth}
\newcommand{\settocsectionnumwidth}[1]{\setlength{\@tocsectionnumwidth}{#1}}
\newcommand{\settocsubsectionnumwidth}[1]{\setlength{\@tocsubsectionnumwidth}{#1}}
\newcommand{\settocsubsubsectionnumwidth}[1]{\setlength{\@tocsubsubsectionnumwidth}{#1}}
\newcommand{\settocsectionindent}[1]{\setlength{\@tocsectionindent}{#1}}
\newcommand{\settocsubsectionindent}[1]{\setlength{\@tocsubsectionindent}{#1}}
\newcommand{\settocsubsubsectionindent}[1]{\setlength{\@tocsubsubsectionindent}{#1}}
\renewcommand{\l@section}{\section@tocline{1}{\@tocsectionvskip}{\@tocsectionindent}{}{\@tocsectionformat}}%
\renewcommand{\l@subsection}{\subsection@tocline{2}{\@tocsubsectionvskip}{\@tocsubsectionindent}{}{\@tocsubsectionformat}}%
\renewcommand{\l@subsubsection}{\subsubsection@tocline{3}{\@tocsubsubsectionvskip}{\@tocsubsubsectionindent}{}{\@tocsubsubsectionformat}}%
\newcommand{\@tocsectionformat}{}
\newcommand{\@tocsubsectionformat}{}
\newcommand{\@tocsubsubsectionformat}{}
\def\csname toc@1format\endcsname{\@tocsectionformat}
\def\csname toc@2format\endcsname{\@tocsubsectionformat}
\def\csname toc@3format\endcsname{\@tocsubsubsectionformat}
\newcommand{\settocsectionformat}[1]{\renewcommand{\@tocsectionformat}{#1}}
\newcommand{\settocsubsectionformat}[1]{\renewcommand{\@tocsubsectionformat}{#1}}
\newcommand{\settocsubsubsectionformat}[1]{\renewcommand{\@tocsubsubsectionformat}{#1}}
\newlength{\@tocsectionvskip}
\newcommand{\settocsectionvskip}[1]{\setlength{\@tocsectionvskip}{#1}}
\newlength{\@tocsubsectionvskip}
\newcommand{\settocsubsectionvskip}[1]{\setlength{\@tocsubsectionvskip}{#1}}
\newlength{\@tocsubsubsectionvskip}
\newcommand{\settocsubsubsectionvskip}[1]{\setlength{\@tocsubsubsectionvskip}{#1}}
\patchcmd{\tocsection}{\indentlabel}{\makebox[\@tocsectionnumwidth][l]}{}{}
\patchcmd{\tocsubsection}{\indentlabel}{\makebox[\@tocsubsectionnumwidth][l]}{}{}
\patchcmd{\tocsubsubsection}{\indentlabel}{\makebox[\@tocsubsubsectionnumwidth][l]}{}{}
\newcommand{\@sectypepnumformat}{}
\renewcommand{\contentsline}[1]{%
  \expandafter\let\expandafter\@sectypepnumformat\csname @toc#1pnumformat\endcsname%
  \csname l@#1\endcsname}
\newcommand{\@tocsectionpnumformat}{}
\newcommand{\@tocsubsectionpnumformat}{}
\newcommand{\@tocsubsubsectionpnumformat}{}
\newcommand{\setsectionpnumformat}[1]{\renewcommand{\@tocsectionpnumformat}{#1}}
\newcommand{\setsubsectionpnumformat}[1]{\renewcommand{\@tocsubsectionpnumformat}{#1}}
\newcommand{\setsubsubsectionpnumformat}[1]{\renewcommand{\@tocsubsubsectionpnumformat}{#1}}
\renewcommand{\@tocpagenum}[1]{%
  \hfill {\mdseries\@sectypepnumformat #1}}
\let\oldappendix\appendix
\renewcommand{\appendix}{%
  \leavevmode\oldappendix%
  \addtocontents{toc}{%
    \protect\settowidth{\protect\@tocsectionnumwidth}{\protect\@tocsectionformat\sectionname\space}%
    \protect\addtolength{\protect\@tocsectionnumwidth}{2em}}%
}
\let\oldtableofcontents\tableofcontents
\renewcommand{\tableofcontents}{%
  \vspace*{-\linespacing}
  \oldtableofcontents}
\definecolor{antiquefuchsia}{rgb}{0.57, 0.36, 0.51}
\definecolor{azure}{rgb}{0.0, 0.5, 1.0}
\definecolor{blue(ncs)}{rgb}{0.0, 0.53, 0.74}
\theoremstyle{plain}
\newtheorem{theorem}{Theorem}
\newtheorem{proposition}[theorem]{Proposition}
\newtheorem{corollary}[theorem]{Corollary}
\newtheorem{lemma}[theorem]{Lemma}
\theoremstyle{definition}
\newtheorem{definition}[theorem]{Definition}
\newtheorem{example}[theorem]{Example}
\theoremstyle{remark}
\newtheorem{remark}{Remark}
\numberwithin{theorem}{section}
\numberwithin{equation}{section}
\newcommand{\R}{\mathbb{R}}
\newcommand{\Z}{\mathbb{Z}}
\newcommand{\U}{\mathcal{U}}
\newcommand{\B}{\mathcal{B}}
\newcommand{\so}{\mathfrak{so}}
\newcommand{\I}{\mathcal{I}}
\newcommand{\A}{\mathcal{A}}
\renewcommand{\sc}{\mathrm{sc}}
\newcommand{\g}{\mathfrak{g}}
\newcommand{\gl}{\mathfrak{gl}}
\newcommand{\D}{\mathscr{D}}
\newcommand{\M}{\mathcal{M}}
\newcommand{\E}{\mathscr{E}}
\newcommand{\id}{\mathrm{id}}
\newcommand{\inv}{\mathrm{inv}}
\renewcommand{\L}{\mathcal{L}}
\newcommand{\C}{\mathbb{C}}
\renewcommand{\u}{\mathfrak{u}}
\newcommand{\defn}{\overset{\mathrm{def}}{=} }
\DeclareMathOperator{\supp}{supp}
\DeclareMathOperator{\End}{End}
\DeclareMathOperator{\Gl}{Gl}
\DeclareMathOperator{\F}{\mathcal{F}}
\DeclareMathOperator{\dist}{dist}
\renewcommand{\d}{\mathrm{d}}
\newcommand{\interior}{\mathrm{int}}
\renewcommand{\newline}{\vspace{.2em}\\\indent}
\begin{document}
\title{Stability of the non-abelian $X$-ray transform in dimension $\ge 3$}
\author{Jan Bohr}
\address{University of Cambridge\,${}^\dagger$\footnote{$\dagger$ {\it Department of Pure Mathematics and Mathematical Statistics}, Wilberforce Road,  Cambridge CB3 0WB, UK}}
\date{\today}
\email{bohr@maths.cam.ac.uk}



\setcounter{tocdepth}{1}

\begin{abstract}
Non-abelian $X$-ray tomography seeks to recover a matrix potential $\Phi:M\rightarrow \C^{m\times m}$ in a domain $M$ from measurements of its so called scattering data $C_\Phi$ at $\partial M$. For $\dim M\ge 3$ (and under appropriate convexity and regularity conditions), injectivity of the forward map $\Phi \mapsto C_\Phi$ was established in \cite{PSUZ19}. In this article we extend \cite{PSUZ19} by proving a H{\"o}lder-type stability estimate. As an application we generalise a statistical consistency result for $\dim M =2$  \cite{MNP19} to higher dimensions.\\
The injectivity proof in \cite{PSUZ19} relies on a novel method by Uhlmann-Vasy \cite{UhVa16}, which first establishes injectivity in a shallow layer below $\partial M$ and then globalises this by a layer stripping argument. The main technical contribution of this paper is a more quantitative version of these arguments, in particular proving uniform bounds on layer-depth and stability constants.
\end{abstract}

\maketitle

\tableofcontents 

\newpage 
\section{Introduction}
Let $(M,g)$ be a compact $d$-dimensional Riemannian manifold with strictly convex boundary ($d\ge 2$) and $\Phi:M\rightarrow \C^{m\times m}$ ($m\ge 1$) a continuous matrix-potential. Suppose $\gamma: [0,\tau]\rightarrow M$ is a unit-speed geodesic with endpoints on $\partial M$ and consider the linear matrix  differential equation
\begin{equation}
\dot U(t) + \Phi(\gamma(t))U(t) = 0,\quad U(\tau) = \id.
\end{equation}
This has a unique continuous solution $U:[0,\tau]\rightarrow \Gl(m,\C)=\{A\in \C^{m\times m}:\det A \neq 0\}$ and we write $C_\Phi(\gamma)=U(0)\in \Gl(m,\C)$ for its value at the boundary. The matrix $C_\Phi(\gamma)$ is called {\it scattering data} or {\it non-abelian $X$-ray transform} of $\Phi$  (along $\gamma$).  For $m=1$ we have $\log C_\Phi(\gamma)=  \int_0^\tau \Phi(\gamma(t)) \d t$,  which is the standard $X$-ray transform; for $m\ge 2$ this relation breaks as $\Gl(m,\C)$ ceases to be abelian. 
\\
We are concerned with an inverse problem for the non-abelian $X$-ray transform with access to partial data: Can one recover $\Phi$  in an open set $O\subset M$ from measuring $C_\Phi(\gamma)$ for geodesics $\gamma$ that do not leave $O$? For $d\ge 3$ 
and $O\subset M$ satisfying  the so called \textit{foliation condition} (see Definition \ref{foliationcondition} below) it is known that locally, smooth potentials are determined uniquely by their scattering data. Precisely,  \cite{PSUZ19} establishes injectivity of the map
\begin{equation} \label{def1nonabelian}
C^\infty(O,\C^{m\times m})\ni \Phi \mapsto (C_\Phi(\gamma):
\gamma\in \Gamma_O),
\end{equation}
where $\Gamma_O$ is the set of unit-speed geodesics $\gamma:[0,\tau]\rightarrow M$ with $\gamma([0,\tau]) \subset O$ and both endpoints on $\partial M$, so called $O$-local geodesics.
In this article injectivity is refined to a H{\"o}lder type stability estimate; this estimate is our main result, precisely formulated in Theorem \ref{mainthm} below.
\newline
Non-abelian $X$-ray tomography provides the mathematical basis for the novel imaging technology of {\it polarimetric neutron tomography} \cite{nature1}\cite{nature2}, which seeks to determine a magnetic field  within a medium by probing it with neutron beams and measuring the spin-change that results from traversing the magnetic field. In this setting $\Phi$ takes values in  $\so(3)=\{A\in \R^{3\times 3}:A^T=-A\}$ and encodes the magnetic field and
 $C_\Phi(\gamma) \in SO(3)$ describes the resulting rotation of the spin-vector for a neutron travelling along $\gamma$. For a survey on further applications of non-abelian $X$-ray tomography we refer to \cite{Nov19}.\newline
Even in the simplest example, when $M$ is a Euclidean ball (thus geodesics are straight lines) and we have access to full data ($O=M$), the inverse problem described above is very challenging. It is nonlinear and for $m\ge 2$ no explicit inversion formula is known or expected to exist.
  \\
At the same time, real-life applications 
demand a computational approach to `solve' the inverse problem, typically in the presence of statistical noise on the measurements.
An attractive  and widely used such approach is Stuart's framework of {Bayesian inverse problems} \cite{DaSt17}, in which $\Phi$ is estimated from draws of a `posterior probability measure', which can be computed from a finite number of observations $C_\Phi(\gamma_1),\dots, C_\Phi(\gamma_n)$.\\ 
From a theoretical point of view this shifts the focus to a rigorous study of the performance of Bayesian algorithms. For the non-abelian $X$-ray transform this was initiated in \cite{MNP19}, where the authors prove a statistical consistency result in dimension $d=2$. This roughly asserts that potentials $\Phi$ can be recovered from $C_\Phi$ by a Bayesian algorithm with arbitrary accuracy, as the number of measurements $n \rightarrow \infty$. 
One of the key ingredients in the statistical analysis of non-linear inverse problems is a quantitative stability estimate with good control on the involved constants.  This principle has emerged in a series of recent papers, including \cite{MNP19} for the two-dimension non-abelian $X$-ray transform, as well as \cite{AbNi19} and \cite{GiNi19}, which analyse the Calder{\'o}n-problem and an inverse problem for the Schr{\"o}dinger equation respectively. In our case,  establishing consistency in $L^2$-norm requires a stability estimate of the form 
\begin{equation}
\Vert \Phi - \Psi \Vert_{L^2} \le C(\Phi, \Psi) \cdot  d(C_\Phi, C_\Psi),
\end{equation}
\nopagebreak[1]where $C(\Phi,\Psi)>0$ is bounded over large classes of potentials $\Phi,\Psi$ and $d(\cdot, \cdot)$ 
 is an appropriate (semi-)metric. In \cite{MNP19}, the authors prove such an estimate in the two-dimensional case for $d(\cdot,\cdot)$ given by the distance in an $H^1$-Sobolev-space. Using an interpolation argument they derive further stability estimates with $d(\cdot,\cdot)=\Vert \cdot - \cdot \Vert^\mu_{L^2}$ and $\mu \in (0,1)$. Our main theorem contains a version of this H{\"o}lder type stability estimate for $d\ge 3$ and implies essentially the same consistency result as in two dimensions, however with the caveat of requiring priors of higher regularity and obtaining a slower rate of convergence. \newline
In a Euclidean setting the two-dimensional results from \cite{MNP19} are relevant also in higher dimensions, as one can reduce to $d=2$ by recovering $\Phi$ slice by slice; nevertheless there are several reasons to study the case $d\ge 3$ intrinsically.  Besides the applicability to a wider class of geometries,  partial data results become available, which for $d=2$ are less well understood and not available in general   \cite{Bo11}. This is of direct relevance to real-life applications, where one might have access only to localised measurement data. Further,  the methods for proving injectivity are very different in $d=2$ versus $d\ge 3$ and the quest for new stability estimates requires refining the methods to make them more quantitative,  which might in turn prove useful in other problems. This is especially true in $d \ge 3$, where injectivity is proved  by means of a novel and extremely versatile technique, as explained in the next paragraph.\newline
 The working horse behind many partial data results in $d\ge 3$, for non-abelian $X$-ray tomography as well as boundary rigidity and some other geometric inverse problems, is 
a ground-breaking technique of Uhlmann and Vasy \cite{UhVa16}. Their method automatically provides a local 
stability estimate for the linearised problem; however, there are two less welcome features: The necessity of smooth data and the need to globalise. Let us elaborate on these points to explain the main technical contributions of this article.\\
With microlocal analysis at the core of the method, smoothness of the underlying data (in our case the potential $\Phi$) is not easily relaxed to lower regularity; in particular the constants in the local stability estimate a priori depend continuously on $\Phi$ only in the $C^\infty$-topology. However, statistical consistency demands better control and one of our main contributions is to show uniformity on \textit{arbitrarily large} $C^k$-balls (for $k\ge 0$ sufficiently large).\\ 
By `globalisation' we mean the extension of injectivity from small neighbourhoods of boundary points to larger domains, or all of $M$, via a layer stripping argument. As the initial domain of injectivity depends on the potentials $\Phi$, the layer stripping argument becomes more delicate and another contribution  of this paper is to carefully combine the arguments from \cite{PSUZ19} and \cite{SUV16} to globalise stability estimates for the non-abelian $X$-ray transform.

\subsection{Notation and Background} 
We denote with $SM=\{(x,v)\in TM:\vert v\vert =1\}$ the unit-sphere bundle of $M$ and write $\pi:SM\rightarrow M$ for the projection onto the base variable. $SM$ is itself a manifold with boundary and, writing $\nu$ for the inward-pointing unit-normal to $\partial M$, we can decompose $\partial SM$ into 
\begin{equation*}
\partial_\pm 
SM=\{(x,v) \in SM: x\in \partial M, \pm \langle \nu(x), v\rangle \ge 0\}.
\end{equation*}
Let $X$ be the geodesic vector field on $SM$ and $\varphi_t$ the geodesic flow. We then write $\gamma_{x,v}(t)=\pi(\varphi_t(x,v))$ for the geodesic adapted to $(x,v)\in SM$ and $\tau(x,v)\in [0,\infty]$ for the first time that $\gamma_{x,v}$ exits $M$. If $\tau(x,v)<\infty$ for all $(x,v)\in SM$, then $M$ is called \textit{non-trapping}. Further we say that $\partial M$ is {\it strictly convex} if  its second fundamental form is positive definite everywhere.\newline
If $M$ is non-trapping and has strictly convex boundary, then  $\partial_+SM$ naturally para\-me\-trises all geodesics with endpoints on $\partial M$ and  the non-abelian $X$-ray transform can be recast as map
\begin{equation}
C(M,\C^{m\times m})\rightarrow C(\partial_+SM,\Gl(m,\C)),\quad \Phi\mapsto C_\Phi.
\end{equation}
Precisely,  we set $C_\Phi = U_\Phi \vert_{\partial_+SM}$, where  $U_\Phi:SM\rightarrow \Gl(m,\C)$ denotes the unique continuous solution (differentiable along the geodesic flow) of 
\begin{equation}\label{bvp}
(X+\Phi)U_\Phi = 0 \text{ on } SM\quad \text{ and }\quad U_\Phi = \id \text{ on } \partial_-SM.
\end{equation}
For  $O\subset M$ open we write $\M_O\subset \partial_+SM$ for the open set of all $(x,v)$ for which $\gamma_{x,v}(t)\in O$ for $0\le t\le \tau(x,v)$. The set $\M_O$ parametrises the collection $\Gamma_O$ of $O$-local geodesics. The following condition, introduced in this form in \cite{PSUZ19}, ensures that $O$ is scanned by sufficiently many geodesics emerging from $\M_O$ and allows to prove an injectivity result as stated below.

\begin{definition}\label{foliationcondition}
	An open subset $O\subset M$ satisfies the \textit{foliation condition}, if 
	there is a smooth, strictly convex function $\rho: O \rightarrow \R$ which is \textit{exhausting} in the sense that  $O_{\ge c}=\{x \in O: \rho(x) \ge c\}\subset M$ is compact for all $c>\inf_O \rho$.
\end{definition}

\begin{theorem} \label{PSUZ}[Paternain, Salo, Uhlmann, Zhou, 2019 \cite{PSUZ19}]\,\\ Let $d\ge 3$,  assume that $\partial M$ is strictly convex and $O$ satisfies the foliation condition. Then for smooth potentials $\Phi,\Psi:M\rightarrow \C^{m\times m}$ we have that \begin{equation}
C_\Phi = C_\Psi \text{ on } \M_O \quad \Longrightarrow \quad \Phi = \Psi \text{ on } O.
\end{equation}
\end{theorem}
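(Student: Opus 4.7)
The plan is to follow the strategy outlined in the introduction: a pseudo-linearisation that reduces the problem to an attenuated $X$-ray transform with a matrix connection, then local injectivity in a shallow layer beneath a level set of $\rho$ using the microlocal technique of Uhlmann--Vasy \cite{UhVa16}, and finally a globalisation via layer stripping driven by the foliation function.

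\textbf{Step 1 (Pseudo-linearisation).} Set $W=U_\Phi U_\Psi^{-1}$ on $SM$; a direct computation using \eqref{bvp} shows that $XW + \Phi W - W\Psi=0$. If $C_\Phi=C_\Psi$ on $\M_O$, then $W=\id$ on $\partial_-SM$ and on those fibres of $\partial_+SM$ corresponding to $O$-local geodesics. Writing $V=W-\id$ and introducing the matrix connection $\A_{\Phi,\Psi}:A\mapsto \Phi A-A\Psi$ acting on $\C^{m\times m}$-valued functions, one obtains
\begin{equation*}
(X+\A_{\Phi,\Psi})V=\Psi-\Phi,\qquad V\big|_{\partial SM\cap \M_O\text{-traces}}=0.
\end{equation*}
Integrating along geodesics in $\M_O$, this is equivalent to the vanishing of the attenuated $X$-ray transform of the matrix-valued function $f\defn \Psi-\Phi$ with attenuation $\A_{\Phi,\Psi}$. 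The goal then reduces to showing $f=0$ on $O$.

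\textbf{Step 2 (Local injectivity via scattering calculus).} Given a point $x_0\in O$ with $\rho(x_0)=c$, I would work microlocally near the level set $\{\rho=c\}$. Using the strict convexity of $\rho$ from Definition \ref{foliationcondition}, construct a slightly modified defining function $\widetilde\rho=\rho-c+\epsilon|\cdot|^2$ whose super-level set $\Omega_c=\{\widetilde\rho\ge 0\}$ is a small lens-shaped neighbourhood of $x_0$ sitting inside $O$, with artificial boundary $\{\widetilde\rho=0\}$ strictly convex with respect to the geodesic flow. For geodesics staying close to $x_0$, the attenuated transform of $f$ restricted to $\Omega_c$ vanishes on the relevant set of geodesics tangent to $\M_O$. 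Define the localised normal operator $N_{\A}f$ on $\Omega_c$ and consider its conjugate $e^{-F/h}N_\A e^{F/h}$ with $F=\widetilde\rho^2/2$, interpreted as a semiclassical scattering pseudodifferential operator in Melrose's sense. Following \cite{UhVa16}, the principal symbol is elliptic down to the boundary, independent of the lower-order connection term (which is subprincipal). Ellipticity yields a microlocal left parametrix modulo smoothing, and since $f$ is smooth and supported in $\Omega_c$, the standard bootstrap argument forces $f=0$ in a layer $\{c-\delta<\rho\le c\}\cap O$ near $x_0$.

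\textbf{Step 3 (Globalisation by layer stripping).} Set
\[
c_*\defn \inf\bigl\{c\in [\inf_O\rho,\sup_O\rho]: f\equiv 0 \text{ on } O_{\ge c}\bigr\}.
\]
The supremum is admissible because for $c$ close to $\sup_O\rho$ the set $O_{\ge c}$ is empty or has no interior geodesics. By the local result of Step 2 applied at each point of the compact level set $\{\rho=c_*\}\cap O$, one obtains a uniform $\delta>0$ such that $f$ vanishes on $\{c_*-\delta<\rho\le c_*\}\cap O$, covered by finitely many of the local lenses. This contradicts the minimality of $c_*$ unless $c_*=\inf_O\rho$, in which case $f\equiv 0$ on $O$ and thus $\Phi=\Psi$ on $O$.

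\textbf{Main obstacle.} The delicate step is Step 2: setting up the artificial boundary in a way compatible with the $\mathrm{GL}(m,\C)$-valued attenuation, verifying that the scattering-calculus ellipticity survives the addition of the subprincipal matrix term $\A_{\Phi,\Psi}$, and, crucially for globalisation, obtaining a layer depth $\delta$ depending only on the geometry near $\{\rho=c_*\}$ rather than on $\Phi,\Psi$ themselves—this uniformity is what allows the finite covering argument in Step 3 to close up and, as emphasised in the introduction, will need to be sharpened substantially to yield the stability estimates of the main theorem.
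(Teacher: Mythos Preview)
The paper does not give its own proof of this statement; Theorem \ref{PSUZ} is quoted from \cite{PSUZ19} as background, and the paper's contribution is the quantitative refinement Theorem \ref{mainthm} (which implies injectivity by setting $C_\Phi=C_\Psi$). Your three-step outline---pseudo-linearisation, local injectivity via Uhlmann--Vasy, layer stripping---is exactly the strategy of \cite{PSUZ19} and of this paper, so in broad strokes you are on the right track.

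A few corrections to the details. In Step 1, the paper (Lemma \ref{lem_pseudolin}) takes a slightly different but equivalent route: rather than working with the attenuated transport equation for $V=U_\Phi U_\Psi^{-1}-\id$, it conjugates by smooth integrating factors $R_\Phi, R_\Psi$ to obtain $C_\Phi-C_\Psi = R_\Phi\, I_{\mathcal{W}_{\Phi,\Psi}}(\Phi-\Psi)\,\alpha^*R_\Psi^{-1}$, reducing to the \emph{weighted} linear transform $I_W$ with invertible weight $\mathcal{W}_{\Phi,\Psi}$ rather than an attenuated one. This weighted formulation is what feeds directly into the microlocal machinery of Section \ref{s_robust}.

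In Step 2 you have the right picture but the wrong calculus: the localised normal operators $A_W^\chi$ of \eqref{defnormal} lie in Melrose's \emph{scattering} calculus $\Psi_\sc^{-1,0}(X)$, not the semiclassical calculus; the exponential conjugation is by $e^{-1/x}$ with $x$ a boundary defining function for the artificial boundary (see \eqref{defnormal}), not by a semiclassical weight $e^{-F/h}$. Also, the weight $W$ (equivalently your attenuation, after conjugation) is not merely subprincipal: it enters the scattering principal symbol through the factor $W^*W$ in \eqref{fullsymbol}, and ellipticity genuinely requires $W$ to be invertible.

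Finally, your ``main obstacle'' slightly misstates the issue for injectivity. Since $\Phi,\Psi$ are fixed throughout the argument, the layer depth $\delta$ may certainly depend on them; what Step 3 needs is only that $\delta$ is bounded below uniformly over the compact level set $\{\rho=c_*\}$, which follows from compactness and continuity of the construction in $c$ and in the base point. Uniformity in $\Phi,\Psi$ is precisely the extra ingredient this paper supplies on top of \cite{PSUZ19} to obtain stability.
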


In fact, the authors of \cite{PSUZ19} consider a more general situation, where  scattering data is defined with respect to attenuations $\A(x,v) = \Phi(x) + A_x(v)$ that may depend on the direction $v$  up to first order. That is, $\Phi$ is a matrix potential as above and $A\in \Omega^1(O,\C^{m\times m})$ is a matrix-valued one-form. In that case a similar result holds true, but one only has injectivity up to  the gauge $\A \mapsto u^{-1} \d u + u^{-1} \A u$ ($u: O\rightarrow \Gl(m,\C)$ smooth).\\
For the full-data problem $(O=M)$, the foliation condition reduces to the existence of a strictly convex function on $M$ and is set into relation with other geometric properties of $M$ in Section 2 of \cite{PSUZ19}: For example, if $M$ (with $\partial M$ strictly convex) supports a strictly convex function, it is automatically non-trapping and contractible. Conversely, if $M$ has non-negative sectional curvatures (or non-positive sectional curvatures and it is simply connected), then it admits a strictly convex function.\newline
Let us  conclude with a  brief overview of the history of the problem. Assuming a flat background geometry and access to full data, the problem was first studied by Vertgeim \cite{Ver91}, with further pioneering work by Novikov \cite{Nov02} and Eskin \cite{Esk04}, who established injectivity in dimension $d\ge 3$ and $d=2$ respectively (up to gauge in the general problem mentioned above).\\
In  the geometric setting and for $d=2$,  the full data problem is typically studied on compact surfaces $(M,g)$ that are {\it simple} in the sense that $\partial M$ is strictly convex and $M$ is assumed to be non-trapping and free of conjugate points.  There, injectivity of the map
\begin{equation}
C^\infty(M,\g)\rightarrow C^\infty(\partial_+SM,G), \Phi\mapsto C_\Phi
\end{equation}
(where $G\subset \Gl(m,\C)$ is a matrix Lie group with Lie-algebra $\g$) was first proved for $G = U(m)$ (the unitary group) in \cite{PSU12}.  In the case $G=\Gl(m,\C)$ injectivity was established under a negative curvature assumption in \cite{PaSa18} and, very recently, for general simple surfaces   \cite{PaSa20}.
Partial data results on the other hand (even for $m=1$) are less well understood in $d=2$  \cite{Bo11} and there is no analogue for \eqref{PSUZ} for smooth (non-analytic) potentials.

  \subsection{Main result} 
  Our main analytical result is the following stability estimate for the non-abelian $X$-ray transform on a compact manifold $(M,g)$, assumed to be non-trapping and have a strictly convex boundary.

  \begin{theorem}\label{mainthm}
  	Suppose $d\ge 3$  and $K\subset O\subset M$ are such that $K$ is compact and $O$ is open and satisfies the foliation condition. Then for smooth potentials $\Phi,\Psi:M\rightarrow \C^{m\times m}$ 
  	we have
  	\begin{equation} \label{mainthm1}
  	\Vert \Phi - \Psi \Vert_{L^2(K)} \le C(\Phi,\Psi) \cdot \Vert C_\Phi - C_\Psi \Vert_{L^2(\M_O)}^{\mu(\Phi,\Psi)},
  	\end{equation}
  	where $C>0$ and $\mu \in (0,1)$ obey an estimate of the form
  	\begin{equation} \label{mainthm2}
  	C(\Phi,\Psi)\vee \mu(\Phi,\Psi)^{-1} \le \omega(\Vert \Phi \Vert_{C^k(M)} \vee \Vert \Psi \Vert_{C^k(M)})
  	\end{equation}
  	for some non-decreasing function $\omega:[0,\infty)\rightarrow [0,\infty)$ and an integer $k\ge 0$.
  \end{theorem}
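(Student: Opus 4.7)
The plan is to combine a quantitative version of the local stability estimate from the Uhlmann--Vasy method \cite{UhVa16, PSUZ19} with a layer-stripping argument driven by the foliation function $\rho$, and then upgrade the resulting Sobolev-scale estimate to the H\"older $L^2$-to-$L^2$ estimate by interpolation against a priori bounds on $C_\Phi - C_\Psi$.

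\textbf{Linearisation and local stability.} Setting $W = U_\Phi U_\Psi^{-1}$, a direct computation from \eqref{bvp} gives $XW = -\Phi W + W\Psi$; integrating along geodesics expresses $C_\Phi - C_\Psi$ as a generalised attenuated $X$-ray transform $\I_{\Phi,\Psi}(\Phi - \Psi)$, with attenuation kernel built from the transport matrices $U_\Phi, U_\Psi$ and thus controlled by finitely many derivatives of $\Phi, \Psi$. Fixing a boundary point $p \in \partial M$ and a boundary-defining function $\tilde\rho$ that is strictly convex, one conjugates the normal operator $\mathcal{N}_{\Phi,\Psi} = \I_{\Phi,\Psi}^* \I_{\Phi,\Psi}$ by an exponential weight $e^{-F/h}$ adapted to $\tilde\rho$, following \cite{UhVa16, PSUZ19}; the conjugated operator belongs to Vasy's scattering pseudodifferential calculus and is elliptic on a shallow layer $\{\tilde\rho \ge -c\}$. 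Elliptic regularity together with Theorem \ref{PSUZ} (used to absorb the compact error) then produces a local estimate
\[
\Vert \Phi - \Psi \Vert_{L^2(\{\tilde\rho \ge -c\})} \le C\, \Vert C_\Phi - C_\Psi \Vert_{H^1(\M_O)}.
\]

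\textbf{Uniformity of layer depth and constants.} The central new task, and what I expect to be the main obstacle, is to show that both $C$ and the admissible depth $c>0$ can be chosen to depend on $(\Phi, \Psi)$ only through $\omega(\Vert \Phi \Vert_{C^k} \vee \Vert \Psi \Vert_{C^k})$ for some finite $k$. The geometric ingredients --- strict convexity of $\tilde\rho$, ellipticity of the unweighted principal symbol at the boundary, invertibility modulo compacts --- are stable under small perturbations, but the scattering-calculus argument contains the attenuation as a lower-order ingredient whose dependence on $\Phi, \Psi$ must be tracked through the conjugation, through the asymptotic composition with $\I_{\Phi,\Psi}^*$, and through the Fredholm inversion. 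This requires revisiting these steps while keeping only finitely many derivatives of $\Phi, \Psi$ under control and replacing the $C^\infty$-continuity arguments of \cite{PSUZ19} by explicit symbol bounds on bounded $C^k$-balls.

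\textbf{Layer stripping and interpolation.} The foliation condition supplies a strictly convex exhausting function $\rho : O \to \R$, and compactness of $K$ forces $\rho|_K \ge c_* > \inf_O \rho$. Partition $[c_*, \sup_O \rho]$ into finitely many sub-intervals of width below the uniform layer depth established above; iteratively apply the local estimate to the super-level sets $O_{\ge c}$ in the style of \cite{SUV16, PSUZ19}, with information from the previously stripped region feeding into the next step as data on an artificial boundary. The number of iterations is controlled by $\omega(\Vert \Phi \Vert_{C^k} \vee \Vert \Psi \Vert_{C^k})$, yielding a global Sobolev-scale estimate $\Vert \Phi - \Psi \Vert_{L^2(K)} \le C(\Phi,\Psi)\, \Vert C_\Phi - C_\Psi \Vert_{H^1(\M_O)}$. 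Finally, a direct energy analysis of \eqref{bvp} gives a priori bounds $\Vert C_\Phi - C_\Psi \Vert_{H^s(\M_O)} \le C'(\Vert \Phi \Vert_{C^s} \vee \Vert \Psi \Vert_{C^s})$ for any $s$, and interpolating $\Vert \cdot \Vert_{H^1}$ between $L^2$ and $H^s$ converts the Sobolev estimate into the desired H\"older estimate \eqref{mainthm1} with exponent $\mu = (s-1)/s$ obeying \eqref{mainthm2}.
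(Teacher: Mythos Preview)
Your overall architecture matches the paper's --- pseudo-linearisation, Uhlmann--Vasy local estimate, layer stripping, interpolation --- but two of the steps, as you have written them, would not go through.

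\textbf{Absorbing the compact error via injectivity.} You propose to obtain the local estimate by ``elliptic regularity together with Theorem \ref{PSUZ} (used to absorb the compact error)''. This is precisely what must be avoided if you want the uniformity \eqref{mainthm2}. An argument that invokes injectivity to kill a compact remainder produces a constant by contradiction/compactness and gives no control of that constant in terms of finitely many derivatives of $\Phi,\Psi$. The paper's route is different: it builds a \emph{finite-order} parametrix for the scattering operator and shows that the residual $R_A = \id - A^+A$, restricted to an $h$-thin collar of the artificial boundary, has $L^2$-operator norm $O(h)$ with the implicit constant bounded by a fixed semi-norm of $A$ and $\lambda(A)^{-1}$. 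Then $\id - QR_A$ is inverted by a Neumann series, and the constants $C(A), h(A)^{-1}$ come out with an explicit polynomial bound (Theorem \ref{thmini} and Lemma \ref{parametrix}). Injectivity is never used.

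\textbf{Layer stripping does not preserve an $H^1$ estimate.} You claim the iteration yields a global estimate $\Vert \Phi-\Psi\Vert_{L^2(K)} \le C(\Phi,\Psi)\,\Vert C_\Phi - C_\Psi\Vert_{H^1(\M_O)}$, after which a single interpolation at the end would give \eqref{mainthm1} with a fixed $\mu$. This step fails: when you move from the layer $\{\rho\ge c\}$ to the next, the local transform $I_W^c f$ on $M_c$ differs from the global $I_W f$ by $I_W(1_{O_{\ge c}} f)$, and the truncated function $1_{O_{\ge c}} f$ lies only in $H^s$ for $|s|<1/2$. You therefore cannot bound the $H^1$-norm of the error term by a forward estimate. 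The paper's fix (Lemma \ref{lem_error}) is to interpolate \emph{at every step}: bound $\Vert I_W^c f\Vert_{H^1}^2 \le \Vert I_W^c f\Vert_{L^2}\Vert I_W^c f\Vert_{H^2}$, control the $L^2$-factor by the $L^2$ error (which is fine) plus $\Vert I_W f\Vert_{L^2}$, and the $H^2$-factor by a forward estimate. This is why the H\"older exponent in the paper is $\mu = 2^{-N}$ with $N$ the number of layers, hence genuinely dependent on $\Phi,\Psi$ through the layer depth $h(W)$.

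A smaller point: the weight you build from $U_\Phi, U_\Psi$ is not smooth across the glancing region $S\partial M$, which is fatal for the scattering calculus. The paper circumvents this by using smooth integrating factors $R_\Phi, R_\Psi$ obtained from a slightly larger manifold (Theorem \ref{thm_mainintfac}), with $C^k$-bounds controlled by $\Vert \Phi\Vert_{C^k}$.
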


 Here the Lebesgue spaces $L^2(K)$ and   $L^2(\M_O)$ (with codomain $\C^{m\times m}$ suppressed from the notation) are defined with respect to the natural Riemannian volume forms on the ambient manifolds $M$ and $\partial_+SM$. The space $C^k(M)$ consist of functions which are $k$-times continuously differentiable up to $\partial M$ and a choice 
 of continuous norm $\Vert \cdot\Vert_{C^k(M)}$ (defined with respect to some atlas) is assumed to be fixed throughout the discussion. Further, the notation $a\vee b$ is used  for the maximum of two quantities $a,b>0$. \\
Finally we mention that in the formulation of Theorem \ref{mainthm} as well as below we assume smoothness of the involved potentials $\Phi,\Psi$ only for convenience.  In all cases one can derive results for potentials of finite regularity $C^k$ (for $k\ge 0$ determined by \eqref{mainthm2} or a similar bound) by means of an approximation argument: If $\Phi \in C^k(M,\C^{m\times m})$  is approximated by a sequence of smooth potentials $(\Phi_n:n\ge 0)$ in $C^k$-norm, then $C_{\Phi_n}\rightarrow C_\Phi$ in $H^k(\partial_+SM)$ by Corollary \ref{cor_naxray} below.  As the $C^k$-norms are bounded along the sequence,  a bound as in \eqref{mainthm2} prevents the constants from blowing up, such that the stability estimate persists in the limit.
 \newline
  In view of the local stability estimates for the linearised problem in \cite[Theorem 1.3]{PSUZ19} and the situation in $d=2$ \cite[Corollary 1.4]{MNP19} 
  one might expect a stronger result, with the right hand side of \eqref{mainthm1} being replaced by a Lipschitz-type bound $\le C(\Phi,\Psi) \cdot \Vert C_\Phi - C_\Psi \Vert_{F(\M_O)}$ in terms of a suitable function-space $F$, say of Sobolev-regularity $H^1$ or even $H^{1/2}$. However, our result is both  in line with the available estimates for the related conformal boundary rigidity problem and  it is sufficient to prove statistical consistency. Let us elaborate on these points:\\
  The conformal boundary rigidity problem (determining a Riemannian metric $g$ on $M$ within a fixed conformal class from its boundary distance function) shares many features with the problem at hand: It is a gauge-free non-linear problem, which in dimension $d\ge 3$ is solved with Uhlmann-Vasy's method, also requiring a layer stripping argument to propagate injectivity into the interior of $M$. It is thus  natural to compare the available stability estimates \cite[Theorem 1.4]{SUV16} and indeed, equation (3) there is of a similar form as \eqref{mainthm1} here. Both here and there, the passage to the weaker H{\"o}lder-type estimate is an artefact of the globalisation procedure, which employs interpolation  at every step of the layer stripping argument.\\
To understand the statistical consequences of  Theorem \ref{mainthm}, we draw the comparison with the stability estimates in \cite{MNP19}. Their result concerns the full data problem on a simple surface $(M,g)$ and states  that
\begin{equation}\label{2dlipschitzstability}
\Vert \Phi - \Psi \Vert_{L^2(M)} \le C(\Phi,\Psi) \cdot \Vert C_\Phi - C_\Psi \Vert_{H^1(\partial_+SM)}
\end{equation}
for all smooth potentials $\Phi,\Psi:M\rightarrow \u(m)$ and some $C(\Phi,\Psi)>0$ which is bounded, as long as the $C^1$-norms of $\Phi$ and $\Psi$ are bounded.  This stability estimate is derived by means of a Pestov-type energy estimate which does not extend to higher dimensions and necessitates the restriction to $\u(m)$-valued potentials (although,
using the techniques from a recent  injectivity result \cite{PaSa20}, one might be able to extend it to general matrix potentials).  By means of the forward estimates in \cite{MNP19} and an interpolation argument (as described in the proof of Theorem 5.16 there),  estimate \eqref{2dlipschitzstability} can be brought into the following H{\"o}lder-type form, again valid for smooth $\Phi,\Psi:M\rightarrow \u(m)$
   \begin{equation} \label{2dstability}
	\Vert \Phi - \Psi \Vert_{L^2(M)} \le C_k(\Phi, \Psi) \cdot \Vert C_\Phi - C_\Psi \Vert_{L^2(\partial_+SM)}^{(k-1)/k}, \quad k \ge 2
	\end{equation}
	where 
	$ 
		C_k(\Phi,\Psi) = c_{1,k} \exp \left (c_{2,k} \left(\Vert \Phi \Vert_{C^k(M)} \vee \Vert \Phi \Vert_{C^k(M)}\right )\right)
	$ 
	for constants $c_{1,k},c_{2,k}>0$ only depending on $(M,g)$ and $m$. 
	This resembles the estimates given in Theorem \ref{mainthm} and indeed, in Section \ref{s_stats} we show that the statistical analysis of \cite{MNP19} carries over to the full data case ($O=M$) in $d\ge 3$: In a Bayesian framework, and under a suitable choice of priors $\Pi_n$ on $C(M,\R^{m\times m})$, 
	the following consistency result holds true:

\begin{theorem}[Consistency] \label{consistencymain} Let $\Phi_0\in C^\infty(M,\R^{m\times m})$ and suppose  we observe $(X_i,V_i)$ and $Y_i =C_{\Phi_0}(X_i,V_i) + \epsilon_i$ ($i=1,\dots,n$), where the directions $(X_i,V_i)\in \partial_+SM$ are drawn uniformly at random and $\epsilon_i\in \R^{m\times m}$ is independent Gaussian noise. Then, as the sample size $n\rightarrow \infty$, the potential $\Phi_0$ can be recovered  as $L^2$-limit (in probability) of the posterior means $\mathbb{E}_{\Pi_n}[\Phi\vert (X_i,V_i,Y_i)_{i=1}^n] \in C(M,\R^{m\times m})$. 
\end{theorem}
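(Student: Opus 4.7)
The plan is to follow the Bayesian nonparametric strategy developed for $d=2$ in \cite{MNP19}, with the H\"older stability estimate of Theorem~\ref{mainthm} (applied with $K=O=M$) replacing its two-dimensional analogue as the bridge between posterior contraction in the forward observation metric and posterior contraction in $L^2(M)$.

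First I would analyse the forward map $\G\colon\Phi\mapsto C_\Phi$. The boundary value problem \eqref{bvp} yields Lipschitz and smoothing estimates showing that $\G$ sends $C^k$-bounded subsets of $C^k(M,\R^{m\times m})$ continuously into $H^k(\partial_+SM,\R^{m\times m})$; this is essentially the content of Corollary~\ref{cor_naxray}, whose proof is dimension independent. These bounds are used to control Kullback--Leibler and Hellinger neighbourhoods of the data law $P^n_{\Phi_0}$ in the regression model (so that, on a fixed $C^k$-ball, Hellinger distance between data laws and forward $L^2$-distance of scattering data are equivalent up to constants), to produce small-ball estimates for the prior in forward metric, and to furnish the exponentially consistent tests required by the general contraction machinery. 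The prior $\Pi_n$ is then chosen as in \cite{MNP19}: a rescaled Gaussian process prior on $C(M,\R^{m\times m})$, truncated (or conditioned) to a $C^k$-ball of slowly growing radius $M_n$, schematically $\Pi_n=\mathrm{Law}\bigl(n^{-\alpha}W \,\big|\, \|W\|_{C^k(M)}\le M_n\bigr)$, with $W$ a smooth centred Gaussian base process, $k$ the integer from Theorem~\ref{mainthm}, and tuning parameters $\alpha>0$ and $M_n\to\infty$. The properties to verify are the usual ones: a concentration function bound at $\Phi_0$ giving the prior mass condition, support inside $\B_{M_n}=\{\|\Phi\|_{C^k(M)}\le M_n\}$, and metric entropy bounds for $\B_{M_n}$ --- all standard for sufficiently smooth Gaussian priors.

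With these ingredients I would invoke a Ghosal--van der Vaart type contraction theorem in the regression version used in \cite[Section~5]{MNP19} to conclude that, in $P^n_{\Phi_0}$-probability,
\begin{equation*}
\Pi_n\bigl(\{\Phi\in\B_{M_n}:\|C_\Phi-C_{\Phi_0}\|_{L^2(\partial_+SM)}\le \varepsilon_n\}\,\big|\,(X_i,V_i,Y_i)_{i=1}^n\bigr)\longrightarrow 1
\end{equation*}
for an appropriate forward rate $\varepsilon_n\to 0$. Applying Theorem~\ref{mainthm} with $K=O=M$ on $\B_{M_n}$ converts this into the parameter-space bound
\begin{equation*}
\|\Phi-\Phi_0\|_{L^2(M)}\le \omega(M_n)\cdot \varepsilon_n^{1/\omega(M_n)}
\end{equation*}
valid on the concentration event, giving posterior consistency in $L^2(M)$ as soon as the tuning makes the right hand side vanish. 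Convergence of the posterior mean then follows by Jensen's inequality on the concentration event combined with the deterministic $L^2$-bound $\|\Phi\|_{L^2(M)}\lesssim M_n$ valid on its complement; this provides the uniform integrability needed to conclude $\mathbb{E}_{\Pi_n}[\Phi\mid(X_i,V_i,Y_i)_{i=1}^n]\to\Phi_0$ in $P^n_{\Phi_0}$-probability in $L^2(M)$.

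The main obstacle, and the source of the slower rate compared to $d=2$ mentioned in the introduction, is the simultaneous balancing of three rates: the prior concentration rate, the truncation level $M_n$, and the H\"older exponent $1/\omega(M_n)$ appearing in Theorem~\ref{mainthm}. Since $\omega$ is only known to be non-decreasing, with no quantitative growth bound, $M_n$ must be chosen to grow very slowly so that $\varepsilon_n^{1/\omega(M_n)}\to 0$, which in turn forces a slow forward rate $\varepsilon_n$ and hence a slow $L^2$-rate. This interpolation/balancing step is the direct analogue of the passage from \eqref{2dlipschitzstability} to \eqref{2dstability} in \cite[Theorem~5.16]{MNP19}; it is purely quantitative and, beyond it, the proof is otherwise a routine application of the now-standard Bayesian nonparametric contraction machinery to the forward/stability pair provided by Corollary~\ref{cor_naxray} and Theorem~\ref{mainthm}.
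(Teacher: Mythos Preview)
Your overall strategy --- forward estimates from Corollary~\ref{cor_naxray}, a Ghosal--van der Vaart contraction theorem in the forward metric, then Theorem~\ref{mainthm} to pass to $L^2(M)$ --- is correct and matches the paper's approach in Section~\ref{s_stats}. However, you introduce an unnecessary complication and, as a consequence, misidentify the main obstacle.

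The paper does \emph{not} truncate the prior to a growing $C^k$-ball. It uses an untruncated rescaled Mat\'ern--Whittle prior $\Pi_n = \L(n^{-d/(4\alpha+2d)}\Phi)$ of regularity $\alpha$ chosen once and for all so that $\alpha - d/2 > k$ (with $k$ the integer from Theorem~\ref{mainthm}). The posterior is then shown, via Fernique's theorem and Borell's isoperimetric inequality, to concentrate on a \emph{fixed} $C^k$-ball $\{\Vert\Phi\Vert_{C^k(M)}\le A\}$ for some sufficiently large but $n$-independent $A$. On this fixed ball the stability constants $C$ and $\mu$ from Theorem~\ref{mainthm} are themselves fixed, so the passage from forward-metric contraction at rate $\delta_n$ to $L^2$-contraction at rate $\delta_n^\mu$ is immediate --- no balancing of $M_n$ against $\omega(M_n)$ arises at all. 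Your growing truncation $M_n\to\infty$ creates the very obstacle you then labour to overcome; with fixed $A$ it simply disappears. The genuine source of the ``slower rate'' mentioned in the introduction (see Remarks~\ref{unknown1} and~\ref{unknown2}) is rather that both the required prior regularity $\alpha$ (through the unknown $k$) and the resulting H\"older exponent $\mu$ are unknown and uncontrolled --- this is a limitation of Theorem~\ref{mainthm} itself, not of the statistical argument. For the posterior mean, the paper uses the exponential rate of posterior contraction (not just convergence to $1$) together with Gaussian tail bounds, following \cite{MNP19}; your uniform-integrability argument via the deterministic bound $\Vert\Phi\Vert_{L^2}\lesssim M_n$ is a legitimate alternative that the truncation makes available, but is not how the paper proceeds.
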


As the statistical analysis is conceptually independent of the remaining paper, a more detailed discussion of the underlying priors and a  comparison with \cite{MNP19} is postponed to Section \ref{s_stats}. The theorem above is restated in Theorem \ref{Bcont} and the remarks thereafter.\newline
Continuing our discussion of Theorem \ref{mainthm}, we remark that estimate \eqref{mainthm2} is a way of saying that for smooth potentials $\Phi$ and $\Psi$ lying inside of a fixed ball $\{{\Vert \cdot \Vert_{C^k(M)} }\,\le A\}$ ($A>0$), one may choose the constants $C$ and $\mu$ uniformly. This is a stronger result than the uniformity in \cite[Theorem 1.4]{SUV16} (conformal boundary rigidity), which only holds over sufficiently small balls. However, similar to the just cited result, the required regularity $k$ for which \eqref{mainthm2} is true, is unknown. This is in stark contrast with the  two-dimensional situation in \eqref{2dstability}, where one can freely choose $k\ge 2$.  To the knowledge of the author, the available techniques to reduce the required regularity to some smaller $k'\ll k$ (cf. \cite[Theorem 2b]{FSU07}, where $k'=2$), only yield uniformity for \textit{generic} elements of $C^{k'}$, which is not sufficient for the  statistical application mentioned above.

   \subsection{Key ideas and structure} \label{ss_keyideas}  Analysis of the non-abelian $X$-ray transform starts with a pseudo-linearisation identity  that we will now describe. Given a potential $\Phi\in C^\infty(M,\C^{m\times m})$ we call any (smooth) solution  $R:SM\rightarrow \Gl(m,\C)$ to $(X+\Phi)R=0$ on $SM$ an {\it integrating factor} for $\Phi$. Smooth integrating factors always exist in our setting ($M$ compact, non-trapping \& with strictly convex boundary) and can be used to express the non-abelian $X$-ray transform in  terms of the linear, weighted $X$-ray transform
   \begin{equation}\label{weighted}
   I_W f(x,v) = \int_0^{\tau(x,v)} Wf(\varphi_t(x,v)) \d t,\quad (x,v)\in \partial_+SM,
   \end{equation}
   defined for $W:SM\rightarrow \C^{m\times m}$ and $f:M\rightarrow \C^{m}$.  Precisely, we have:
   \begin{lemma}\label{lem_pseudolin} Let $\Phi,\Psi\in C^\infty(M,\C^{m\times m})$ and suppose that $R_\Phi$ and $R_\Psi$ are smooth integrating factors for $\Phi$ and $\Psi$ respectively. Then we have
   \begin{equation}   \label{pseudolin}
  C_\Phi - C_\Psi = R_\Phi\cdot  I_{\mathcal{W}_{\Phi,\Psi}} (\Phi - \Psi) \cdot \alpha^*R^{-1}_\Psi \quad \text{ on } \partial_+SM,
   \end{equation}
   where $\alpha(x,v)=\varphi_{\tau(x,v)}(x,v)$ is the scattering relation of $(M,g)$ and the weight $\mathcal{W}_{\Phi,\Psi}:SM\rightarrow \mathrm{End}(\C^{m\times m})$ is  defined pointwise by $\mathcal{W}_{\Phi,\Psi}A = R_\Phi^{-1} A R_\Psi$ for $A\in \C^{m\times m}$.
   \end{lemma}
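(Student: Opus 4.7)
The plan is to reduce the identity to a computation using the fundamental theorem of calculus along geodesics, exploiting how integrating factors transform the solution of the transport equation.

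First I would express $C_\Phi$ in terms of $R_\Phi$. Since both $R_\Phi$ and $U_\Phi$ satisfy $(X+\Phi)V=0$, a direct computation shows $X(R_\Phi^{-1}U_\Phi) = R_\Phi^{-1}\Phi U_\Phi - R_\Phi^{-1}\Phi U_\Phi = 0$, so $R_\Phi^{-1}U_\Phi$ is constant along the geodesic flow. Using the boundary condition $U_\Phi|_{\partial_-SM} = \id$ and flowing from $(x,v)\in\partial_+SM$ to $\alpha(x,v)\in\partial_-SM$, this yields
\begin{equation*}
C_\Phi = R_\Phi\cdot \alpha^*R_\Phi^{-1}\quad \text{on } \partial_+SM,
\end{equation*}
and analogously for $\Psi$. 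Subtracting and factoring $R_\Phi$ on the left and $\alpha^* R_\Psi^{-1}$ on the right reduces \eqref{pseudolin} to the identity
\begin{equation*}
\alpha^*(R_\Phi^{-1}R_\Psi)-R_\Phi^{-1}R_\Psi \;=\; I_{\mathcal{W}_{\Phi,\Psi}}(\Phi-\Psi)\quad \text{on } \partial_+SM.
\end{equation*}

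Next I would verify this identity by the fundamental theorem of calculus applied along geodesics. Using $XR_\Phi = -\Phi R_\Phi$ (hence $X(R_\Phi^{-1}) = R_\Phi^{-1}\Phi$) and $XR_\Psi = -\Psi R_\Psi$, one computes
\begin{equation*}
X(R_\Phi^{-1}R_\Psi) = R_\Phi^{-1}(\Phi-\Psi)R_\Psi = \mathcal{W}_{\Phi,\Psi}(\Phi-\Psi),
\end{equation*}
where in the last equality the potential $\Phi-\Psi$ is viewed as a function on $SM$ via pullback along $\pi$. Integrating this equation from $0$ to $\tau(x,v)$ along the flow gives exactly the reduced identity above, in view of the definition \eqref{weighted} of $I_W$ and the definition of $\alpha$.

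There is no real obstacle here: everything follows by inspection once the right objects are differentiated along $X$. The only point requiring minor care is tracking left/right multiplication conventions and the endomorphism action of $\mathcal{W}_{\Phi,\Psi}$ on $\C^{m\times m}$, but this is bookkeeping rather than analysis. Assembling the three displays above produces \eqref{pseudolin}.
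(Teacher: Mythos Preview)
Your proof is correct and follows essentially the same approach as the paper. The paper introduces an auxiliary first integral $F_\Phi$ (with $F_\Phi=R_\Phi^{-1}$ on $\partial_-SM$) to write $U_\Phi=R_\Phi F_\Phi$ and then examines $G=F_\Phi F_\Psi^{-1}-R_\Phi^{-1}R_\Psi$; since $F_\Phi|_{\partial_+SM}=\alpha^*R_\Phi^{-1}$, this is precisely your quantity $\alpha^*(R_\Phi^{-1}R_\Psi)-R_\Phi^{-1}R_\Psi$, and the remaining computation of $X(R_\Phi^{-1}R_\Psi)$ together with the fundamental theorem of calculus is identical.
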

   
 Note that the weighted $X$-ray transform in \eqref{pseudolin} is to be understood `one level higher', identifying $\C^{m\times m} \cong \C^{m'}$ and $\End(\C^{m\times m}) \cong \C^{m'\times m'}$ for $m'=m^2$.  
   
    \begin{proof}
	Let $F_\Phi$ be a first integral for $R_\Phi^{-1}\vert_{\partial_-SM}$ , that is $F_\Phi: SM\rightarrow \Gl(m,\C)$ solves $XF_\Phi=0$ on $SM$ and $F_\Phi = R_\Phi^{-1}$ on $\partial_-SM$. Then $U_\Phi = R_\Phi F_\Phi$ satisfies \eqref{bvp} and $C_\Phi = U_\Phi\vert_{\partial_+SM}$.  Using the corresponding notation for $\Psi$ we have
	\begin{equation}
		U_\Phi - U_\Psi = R_\Phi \cdot (F_\Phi F^{-1}_\Psi - R^{-1}_\Phi R_\Psi) \cdot F_\Psi,
	\end{equation}
	which, when restricted to $\partial_+SM$, yields \eqref{pseudolin}. To see this, note that  $G = F_\Phi F^{-1}_\Psi - R^{-1}_\Phi R_\Psi$ satisfies $XG  = - \mathcal{W}_{\Phi,\Psi}(\Phi-\Psi)$ on $SM$ and $G = 0$ on $\partial_-SM$. The fundamental theorem of calculus now implies that $G\vert_{\partial_+SM} = I_{\mathcal{W}_{\Phi,\Psi}}(\Phi - \Psi)$ and since further  $F_\Psi \vert_{\partial_+SM} = \alpha^* R_\Psi^{-1}$, the proof is complete.
   \end{proof}

We can now summarise the content of the subsequent sections and lay out the general strategy to prove the main results of this article.\\
In {section \ref{s_forward}}, we prove a forward estimate for the map $\Phi \mapsto R_\Phi$, which allows to translate stability estimates for the weighted $X$-ray transform into one for the non-abelian one. Further consequences are forward estimates for $\Phi \mapsto C_\Phi$, which are of interest in statistical applications. The techniques in this section  are similar to the ones in \cite{MNP19}, suitably adjusted to deal with dimension $d\ge 3$ and integrating factors taking values in the non-compact group $\Gl(m,\C)$.\\
{Section \ref{s_microlocal}} prepares the further  analysis  by proving a quantitative version of the microlocal technique (local inversion of scattering operators near elliptic points), introduced in the context of $X$-ray transforms by Uhlmann and Vasy \cite{UhVa16}. We give a self-contained proof, emphasising quantitative bounds on the involved constants.\\
In {section \ref{s_robust}} we start the stability analysis by considering the weighted $X$-ray transform $f\mapsto I_W f$.  By \cite[Thm.\,1.3]{PSUZ19}, if $W:SM\rightarrow \Gl(m,\C)$ is a smooth invertible weight, then every convex boundary point $p\in\partial M$ has a neighbourhood $O$ such that for $K\subset O$ compact we have 
\begin{equation}
\Vert f \Vert_{L^2(K)} \lesssim_K C \cdot \Vert I_W f \Vert_{H^1(\M_O)}.
\end{equation}
Here both $C>0$ and the maximal size of $O$ (say, measured by the largest radius $h>0$ for which the ball $B(p,h)\subset O$) depend on $W$ and we will be concerned with understanding their behaviour as $W$ varies. Standard techniques imply that $C(W)$ and $h(W)$ depend continuously on $W$ in the $C^\infty$-topology, but this is not sufficient for our purposes. Using the quantitative analysis from Section \ref{s_microlocal}, we can upgrade this to uniformity as long as  $\Vert W \Vert_{C^k(SM)} \vee \Vert W^{-1} \Vert_{L^\infty(SM)}$ (for some $k\gg 1$) remains bounded.\\
In {section \ref{s_glob}} we use the local stability result from the previous section to successively derive further stability estimates. First, using a layer stripping argument similar to the one in \cite{PSUZ19}, we extend stability to arbitrary sets satisfying the foliation condition. Next, we use the pseudo-linearisation formula to translate this into a stability estimate for the non-abelian $X$-ray transform and finish the proof of our main theorem.\\
Finally, in {section \ref{s_stats}} we illustrate the strength of Theorem \ref{mainthm} by proving a statistical consistency result similar to the one in \cite{MNP19}. This section is mostly expository, as the extension to higher dimensions and general $\R^{m\times m}$-valued potentials   is fairly straightforward.

\subsection*{Acknowledgements} I would like to thank Gabriel Paternain and Richard Nickl for suggesting the project this article is based on, as well as for their support and guidance while working on it. Further thanks go to  Plamen Stefanov, Andr{\'a}s Vasy, Peter Hintz and Xi Chen who generously offered help and answered my numerous questions on microlocal analysis, as well as Jiren Zhu, whose thesis helped to clarify several aspects of the Uhlmann-Vasy method. 

	\section{Forward Estimates}\label{s_forward}
	
	In this section $(M,g)$ is a compact, non-trapping Riemannian manifold  with strictly convex boundary $\partial M$ and dimension $d\ge 2$.  Further, as it requires no additional effort, we work in a slightly more general setting and replace matrix potentials $\Phi:M\rightarrow \C^{m\times m}$ by attenuations $\A:SM\rightarrow \C^{m\times m}$.\\
	Recall that an integrating factor for $\A$ is a solution to the transport equation $(X+\A)R= 0$  on $SM$. The main result of this section then reads as follows:

	\begin{theorem}\label{thm_mainintfac} For every $\A\in C^\infty(SM,\C^{m\times m})$ there exists an integrating factor $R_\A\in C^\infty(SM,\Gl(m,\C))$ with \begin{equation*}
	 \Vert R_\A^{\pm 1} \Vert_{C^k(SM)} \le c_{1,k} \exp(c_{2,k} \Vert \A\Vert_{L^\infty(SM)}) \cdot (1+  \Vert \A \Vert_{C^k(SM)})^k, \quad k\ge 0
	 	 \end{equation*}
	 	 for constants $c_{1,k},c_{2,k}>0$ only depending on $M$ and $m$. If $\A$ takes values in $\u(m)$, the exponential factors can be dropped.
	\end{theorem}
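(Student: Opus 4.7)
The plan is to realise $R_\A$ as the restriction to $SM$ of the solution of a boundary value problem for the transport equation on a slightly enlarged ambient manifold, and then to extract the $C^k$ estimates inductively by differentiating the transport equation and invoking Gronwall along the flow.

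First, I would embed $(M,g)$ isometrically into a compact, non-trapping Riemannian manifold $(\tilde M,\tilde g)$ with strictly convex boundary such that $M\subset \interior(\tilde M)$, and extend $\A$ to some $\tilde\A\in C^\infty(S\tilde M,\C^{m\times m})$ via a fixed bounded extension operator so that $\Vert \tilde\A\Vert_{C^k(S\tilde M)}\le c_k\Vert \A\Vert_{C^k(SM)}$ with geometric constants. Define $R_\A$ on $SM$ as the restriction of the unique solution to $(X+\tilde\A)\tilde R=0$ on $S\tilde M$ with $\tilde R=\id$ on $\partial_- S\tilde M$. Since the backward exit time $\tilde\tau_-$ of $\tilde M$ is smooth and strictly positive on the compact set $SM\subset \interior(S\tilde M)$ (being away from glancing), $R_\A$ is smooth there, and uniqueness of linear ODEs ensures $R_\A(x,v)\in \Gl(m,\C)$ pointwise.

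A direct Gronwall estimate along the flow gives $\Vert R_\A\Vert_{L^\infty}\le \exp(T\Vert\tilde\A\Vert_{L^\infty})$ for $T=\sup\tilde\tau_-|_{SM}$ a geometric constant; since $R_\A^{-1}$ satisfies $XR_\A^{-1}=R_\A^{-1}\tilde\A$, the same estimate holds for it. When $\A$ takes values in $\u(m)$, the computation $\tfrac{d}{dt}(U^*U)=-U^*(\tilde\A^\ast+\tilde\A)U=0$ together with $U^*U=\id$ on $\partial_- S\tilde M$ forces $R_\A\in U(m)$, removing the exponential factor. For $k\ge 1$, fix a finite atlas with coordinate frames $\{Z_1,\dots,Z_N\}$ on $S\tilde M$ and exploit the key identity
\begin{equation*}
(X+\tilde\A)(Z_{i_1}\cdots Z_{i_k}R_\A) = F_k,
\end{equation*}
where $F_k$ is a universal polynomial expression in $\{Z^\beta\tilde\A\}_{|\beta|\le k}$, $\{Z^\beta R_\A\}_{|\beta|<k}$ with coefficients built from iterated commutators of $X,Z_1,\dots,Z_N$. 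By the induction hypothesis $\Vert Z^\beta R_\A\Vert_{L^\infty}$ is bounded by $c_{1,k-1}\exp(c_{2,k-1}\Vert\tilde\A\Vert_{L^\infty})(1+\Vert\tilde\A\Vert_{C^{k-1}})^{k-1}$ for $|\beta|<k$, so $\Vert F_k\Vert_{L^\infty}\le C_k(1+\Vert\tilde\A\Vert_{C^k})^k\exp(c\Vert\tilde\A\Vert_{L^\infty})$, and a further Gronwall argument together with the bound on the boundary data $Z^\alpha R_\A|_{\partial_- S\tilde M}$ yields the desired estimate for $R_\A$. The bound for $R_\A^{-1}$ follows either by the same scheme, or from the algebraic inequality $\Vert R_\A^{-1}\Vert_{C^k}\le P_k(\Vert R_\A\Vert_{C^k},\Vert R_\A^{-1}\Vert_{L^\infty})$ for some universal polynomial $P_k$.

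The main obstacle is a careful bookkeeping in the induction so that the exponent in $(1+\Vert\tilde\A\Vert_{C^k})^k$ does not inflate beyond $k$: each of the $k$ derivatives must effectively hit at most one $\tilde\A$ factor, with the remaining cost absorbed into purely geometric constants and into $e^{c\Vert\tilde\A\Vert_\infty}$. Equally delicate is the handling of boundary data at $\partial_- S\tilde M$, where tangential derivatives of $\tilde R=\id$ vanish while normal derivatives are extracted from the transport equation itself and therefore contribute additional factors of $\tilde\A$ that must be threaded through the induction. A conceptually cleaner alternative, which I would use to cross-check the accounting, is to differentiate the Dyson series $R_\A=\sum_{n\ge 0}(-1)^n\int_{\Delta_n(\tilde\tau_-)}\tilde\A\circ\tilde\varphi_{s_1}\cdots\tilde\A\circ\tilde\varphi_{s_n}\,ds$ term by term and invoke uniform bounds on flow derivatives together with the exponential convergence of the series to recover the estimate directly.
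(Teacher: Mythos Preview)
Your overall strategy coincides with the paper's: embed $M$ into a larger manifold, extend $\A$, solve the transport equation with identity data on the incoming boundary, restrict back, and then run a Gronwall-based induction on the order of differentiation. The difference lies in two technical choices the paper makes that dissolve precisely the complications you flag as ``the main obstacle'' and ``equally delicate''.

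First, the paper extends $\A$ to an attenuation $\A_1$ with \emph{compact support} in $SM_1^{\interior}$ (not merely a bounded Seeley extension). Consequently $U_{\A_1}\equiv\id$ in a full neighbourhood of $\partial SM_1$, so \emph{all} derivatives $X^jP^\alpha U_{\A_1}$ vanish on $\partial_-SM_1$. This eliminates the boundary-data bookkeeping you anticipate: there is no need to split into tangential and normal derivatives, nor to extract normal derivatives from the transport equation.

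Second, rather than using arbitrary coordinate vector fields $Z_i$, the paper constructs \emph{commuting frames} $\{X,P_1,\dots,P_{2d-2}\}$ on flow-saturated sets $W_\Sigma$ (by pushing forward a coordinate frame on $\Sigma\times\R$ along the flow, using a no-return extension of the geometry). Since $[X,P_i]=0$, one has exactly
\[
(X+\A)(X^jP^\alpha U_\A)=[\A,X^jP^\alpha]\,U_\A,
\]
and the right-hand side involves only Leibniz terms $(X^{j'}P^{\alpha'}\A)(X^{j-j'}P^{\alpha-\alpha'}U_\A)$ with $(j',\alpha')\neq(0,0)$. Each induction step therefore contributes precisely one factor of $\Vert\A\Vert_{C^k}$ (and one $e^{2\tau_\infty\Vert\A\Vert_\infty}$ from the Gronwall bound for the inhomogeneous equation), giving the clean exponent $k$ without any accounting for geometric commutators $[X,Z_i]$.

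Your approach would go through, but both simplifications are worth adopting: they turn the two issues you correctly identify as delicate into non-issues. The Dyson-series cross-check you mention is a valid alternative but is not used in the paper.
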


	In order to define $R_\A$, we use a standard trick to avoid differentiability issues at the glancing region $S\partial M$:	We embed $M$ into the interior of a slightly larger manifold $M_1$ and  extend $\A$ smoothly to an attenuation $\A_1:SM_1\rightarrow \C^{m\times m}$ with compact support in $SM_1^{\interior}$. Then
	\begin{equation*}
	(X+\A_1) U = 0 \text{ on } SM_1\quad \text{ and } \quad U = \id \text{ on } \partial_-SM_1
	\end{equation*}
	has a unique solution $U_{\A_1}:SM_1\rightarrow \Gl(m,\C)$, which is constant $\equiv \id$ near $S\partial M_1$ and thus smooth on all of $SM_1$. Setting $R_\A = U_{\A_1} \vert_{SM}$ gives the desired integrating factor and the forward estimate above is  a consequence of the following result, applied to the larger manifold $M_1$.

	\begin{proposition}\label{prop_intfactor} Let $\A\in C^k(SM,\C^{m\times m})$ ($k\ge 0$) and suppose $U_\A\in C^k(SM,\Gl(m,\C))$ solves  $(X+\A)U = 0$ on $SM$ and $U = \id$ on $\partial_-SM$.

	\begin{enumerate}[label=(\roman*)]
		\item \label{prop_intfactori} Writing $\tau_\infty = \sup_{SM} \tau$, we have 
		$\Vert U_\A \Vert_{L^\infty(SM)} \le  m^{1/2} \exp(\tau_\infty \Vert \A \Vert_{L^\infty(SM)}).$
		\item \label{prop_intfactorii}If $\supp \A \subset K$ for a compact set $K\subset SM^{\interior}$, then 
		\begin{equation*}
		\Vert U_\A  \Vert_{C^k(SM)} \le c  e^{(2k+1)\tau_\infty \Vert \A\Vert_{L^\infty(SM)}}	
	(1+ \Vert \A \Vert_{C^k(SM)})^k , 
		\end{equation*}
		for  a constant $c=c(k,m,K,M)>0$.
		\item \label{prop_intfactoriii} The assertions remain true if $U_\A$ is replaced by its inverse $U_\A^{-1}$. Further, if $\A$ takes values in $\u(m)$, the exponential factors can be dropped.
	\end{enumerate}
	\end{proposition}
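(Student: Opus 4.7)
The starting point is that, pulled back along a unit-speed geodesic $\gamma(t) = \varphi_t(x,v)$ with $(x,v)\in \partial_-SM$, the transport equation becomes the matrix ODE $\dot{U}(t) = -\A(\varphi_t(x,v))U(t)$ with $U(0) = \id$. For part \ref{prop_intfactori}, I would differentiate $|U|_{HS}^2$ along $\gamma$ to get
\begin{equation*}
\tfrac{d}{dt} |U_\A|_{HS}^2 = -2\,\mathrm{Re}\,\mathrm{tr}(U_\A^* \A^* U_\A) \le 2\Vert \A\Vert_{L^\infty}|U_\A|_{HS}^2,
\end{equation*}
so Grönwall gives $|U_\A(\gamma(t))|_{HS}\le \sqrt{m}\,e^{t\Vert \A\Vert_{L^\infty}}$, and taking the supremum over $t\le \tau_\infty$ yields the stated bound. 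In the $\u(m)$-case, $U_\A$ is unitary along every orbit so $|U_\A|_{HS} \equiv \sqrt{m}$, removing the exponential factor.

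For part \ref{prop_intfactorii}, the key observation is that since $\supp \A \subset K \Subset SM^{\interior}$, the solution $U_\A$ is identically $\id$ on a neighborhood $N(K)$ of $\partial_-SM$ that depends only on $K$ and $M$. Hence every tangential derivative $Y^\alpha U_\A$ vanishes on $N(K)$ for $|\alpha|\ge 1$. I would choose a finite collection of smooth vector fields $Y_1,\ldots,Y_N$ on $SM$ spanning the tangent space at every point (whose choice contributes to the constant $c(k,m,K,M)$) and compute, for any such $Y$,
\begin{equation*}
(X+\A)(YU_\A) = [X,Y]U_\A - (Y\A)U_\A,
\end{equation*}
and more generally, inductively, $(X+\A)(Y^\alpha U_\A) = F_\alpha$ where $F_\alpha$ is a polynomial expression in $Y^\beta\A$ ($|\beta|\le |\alpha|$) and $Y^\beta U_\A$ ($|\beta|<|\alpha|$) with coefficients determined by iterated commutators against $X$ (independent of $\A$). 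Treating this as an inhomogeneous ODE along geodesics with zero initial data on $\partial_-SM$ and applying Duhamel's formula together with the integrating factor bound from \ref{prop_intfactori}, one obtains
\begin{equation*}
\Vert Y^\alpha U_\A \Vert_\infty \le C_K\, \tau_\infty\, e^{\tau_\infty \Vert\A\Vert_\infty}\Vert F_\alpha\Vert_\infty.
\end{equation*}
Inducting on $|\alpha|=j=1,\ldots,k$ and recalling that the bound from \ref{prop_intfactori} already contributes one factor $e^{\tau_\infty \Vert\A\Vert_\infty}$, each step introduces at most two additional factors of $e^{\tau_\infty \Vert \A\Vert_\infty}$ (one from Grönwall, and a second from expressing $F_\alpha$ via products with $U_\A$ itself), so that $k$ inductive steps accumulate the quoted exponent $(2k+1)\tau_\infty\Vert \A\Vert_\infty$; the polynomial factor $(1+\Vert \A\Vert_{C^k})^k$ comes from the multilinearity of $F_\alpha$ in the derivatives of $\A$.

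For part \ref{prop_intfactoriii}, the inverse $V = U_\A^{-1}$ satisfies the transport equation $XV = V\A$ (obtained by differentiating $VU_\A = \id$) with initial value $V=\id$ on $\partial_-SM$. This is structurally identical to the equation for $U_\A$, with $\A$ replaced by right multiplication, and the same Grönwall and induction arguments apply verbatim. In the $\u(m)$-case, $U_\A$ takes values in the compact group $U(m)$, giving $\Vert U_\A\Vert_\infty, \Vert U_\A^{-1}\Vert_\infty \le \sqrt{m}$ with no exponential dependence; feeding this uniform zeroth-order bound into the induction replaces every occurrence of $e^{\tau_\infty\Vert \A\Vert_\infty}$ in the recursion by a constant depending only on $m$, so the exponential factor is cleared from all higher derivative bounds as well.

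\textbf{Main obstacle.} The principal difficulty is the careful bookkeeping of the constants, specifically verifying that the accumulated exponent is exactly $2k+1$ rather than some larger power; this requires being attentive to the fact that at each inductive step one should plug the lower-order bounds into $F_\alpha$ \emph{linearly} rather than via products of previously-obtained bounds, so that the exponential factor grows additively. A secondary point is choosing the spanning vector fields $Y_i$ consistently across charts and with uniform control on the commutators $[X,Y_i]$, which explains the geometric dependence of the constant on $K$ and $M$.
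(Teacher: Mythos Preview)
Your arguments for parts \ref{prop_intfactori} and \ref{prop_intfactoriii} are correct and essentially match the paper. The gap is in part \ref{prop_intfactorii}: your claim that $F_\alpha$ involves only $Y^\beta U_\A$ with $|\beta| < |\alpha|$ is false for a generic spanning family of vector fields. Already at the first step you compute $(X+\A)(YU_\A) = [X,Y]U_\A - (Y\A)U_\A$, but $[X,Y]$ is itself a vector field, so $[X,Y]U_\A$ is a \emph{first}-order derivative of $U_\A$, not zeroth order. More generally $[X, Y^\alpha]$ is a differential operator of order $|\alpha|$, and the resulting term couples all top-order derivatives; the induction does not close as written. One could salvage this by treating all order-$k$ derivatives as a coupled ODE system along the flow and absorbing the geometric coupling matrix into $c$, but that is not what you describe, and the bookkeeping of the exponent $(2k+1)$ would need to be redone.

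The paper avoids the issue by constructing, via a no-return extension of $M$, local \emph{commuting} frames $\{X, P_1, \ldots, P_{2d-2}\}$ on flow-saturated sets $W_\Sigma$ (Lemma \ref{commutingframe}): the $P_i$ are pushforwards of a frame on a transversal $\Sigma \subset \partial_+SM\setminus S\partial M$ along the geodesic flow, which forces $[X, P_i] = 0$. Then $(X+\A)(X^j P^\alpha U_\A) = [\A, X^j P^\alpha]U_\A$ with no geometric commutator term, and since $[\A, X^jP^\alpha]$ genuinely has order $k-1$ with coefficients controlled by $\|\A\|_{C^k}$, the scalar induction goes through directly. Note also that the correct Duhamel bound for $(X+\A)G = -F$, $G|_{\partial_-SM}=0$ carries the factor $e^{2\tau_\infty\|\A\|_\infty}$ (coming from $\|U_\A\|_\infty\|U_\A^{-1}\|_\infty$), not $e^{\tau_\infty\|\A\|_\infty}$; this is where both exponential factors per inductive step arise, rather than one from Gronwall and one from $F_\alpha$. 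The commuting-frame construction is precisely what you flagged as a ``secondary point'', but it is in fact the main missing idea.
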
	
	
	\begin{proof}[Proof of Theorem \ref{thm_mainintfac}]
	Following the construction outlined above, Proposition \ref{prop_intfactor} yields an estimate of $R_\A$ in terms of the norms $\Vert \A_1 \Vert_{C^k(SM_1)}$  and it remains to replace this by $
	\Vert \A \Vert_{C^k(SM)}$. Formally, this can be achieved by using Seeley's extension operator $E:C^\infty(SM,\C^{m\times m}) \rightarrow C^\infty(SM_1,\C^{m\times m})$ (Lemma \ref{seeley}). One can arrange (by multiplying with a fixed cut-off), that $\supp E\A\subset K$ for all $\A \in C^\infty(SM,\C^{m\times m})$ and a fixed $K\subset SM_1^{\interior}$. Then, as $E$ is continuous between the respective $C^k$-spaces, setting $\A_1= E \A$ allows to estimate $\Vert \A_1 \Vert_{C^k(SM_1)} \lesssim \Vert \A \Vert_{C^k(SM)}$ as desired.
	\end{proof}
	
	\subsection{Proof of Proposition \ref{prop_intfactor}}	
	We start by constructing suitable commuting frames, adapting \cite[Lemma 5.1]{MNP19} to arbitrary dimensions $d\ge 2$.
	
\begin{lemma} \label{commutingframe} Suppose $\Sigma\subset\partial_+SM\backslash S\partial M$ is open and $\{ P_1,\dots, P_{2d-2}\}$ is a commuting frame of $T\Sigma$. Then these vector fields can be extended smoothly to the open set $W_\Sigma=\{\varphi_t(x,v):(x,v)\in \Sigma, 0\le t\le \tau(x,v)\}\subset SM$ 
to yield a commuting frame $\{X,P_1,\dots,P_{2d-2}\}$ of $TW_\Sigma$.
	\end{lemma}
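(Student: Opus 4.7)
The natural approach is to transport the frame along the geodesic flow, exploiting that pushforward under a diffeomorphism preserves Lie brackets and that $X$ itself is the generator of that flow. Working inside a slight enlargement $M\subset M_1$ so that the flow $\varphi_t$ extends past time $\tau(x,v)$ for each $(x,v)\in\Sigma$ (the same trick used elsewhere in the paper to sidestep glancing at $S\partial M$), I would consider the smooth map
\begin{equation*}
\psi:U\to SM_1,\qquad ((x,v),t)\mapsto \varphi_t(x,v),
\end{equation*}
where $U\subset \Sigma\times\R$ is a small open neighborhood of $\widehat W_\Sigma:=\{((x,v),t):(x,v)\in\Sigma,\ 0\le t\le \tau(x,v)\}$. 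Such a $U$ exists because $\tau$ is smooth on $\Sigma$: for a compact non-trapping manifold with strictly convex boundary, $\tau$ is smooth on $\partial_+SM\setminus S\partial M\supset\Sigma$.

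The heart of the proof is then to show that $\psi$ restricts to a diffeomorphism from some open neighborhood of $\widehat W_\Sigma$ onto an open neighborhood of $W_\Sigma$. Injectivity on $\widehat W_\Sigma$ follows because each point of $W_\Sigma$ lies on a unique maximal geodesic through $M$, whose entry point in $\Sigma$ and elapsed time are uniquely determined; nondegeneracy of $d\psi$ follows from transversality of $X$ to $\partial_+SM$ (a consequence of strict convexity of $\partial M$) together with $d\varphi_t$ being a linear isomorphism. A standard inverse-function-theorem argument then upgrades these two facts to a diffeomorphism on a (possibly shrunken) open neighborhood.

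Given this, define $\tilde P_i:=\psi_*\bar P_i$, where $\bar P_i$ is the trivial, $t$-independent lift of $P_i$ from $\Sigma$ to $\Sigma\times\R$. These are smooth vector fields on an open neighborhood of $W_\Sigma$, and by construction $\psi_*\partial_t=X$. Naturality of Lie brackets under diffeomorphisms turns $[\partial_t,\bar P_i]=0$ (since $\bar P_i$ has no $t$-dependence) and $[\bar P_i,\bar P_j]=0$ (by hypothesis) into $[X,\tilde P_i]=0$ and $[\tilde P_i,\tilde P_j]=0$ on $W_\Sigma$. Since $(\partial_t,\bar P_1,\dots,\bar P_{2d-2})$ frames the tangent bundle upstairs, its pushforward $(X,\tilde P_1,\dots,\tilde P_{2d-2})$ frames $TW_\Sigma$, as required.

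The main delicate point is the global injectivity of $\psi$ on $\widehat W_\Sigma$ (as opposed to mere local injectivity of an immersion); this uses the geometric input that in a non-trapping manifold with strictly convex boundary the orbits of the geodesic flow through an interior unit vector are parametrised bijectively by (entry direction, elapsed time). Once this is established, every remaining step is essentially formal.
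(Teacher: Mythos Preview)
Your approach is essentially the same as the paper's: transport the frame along the geodesic flow by showing the map $((x,v),t)\mapsto\varphi_t(x,v)$ is a diffeomorphism onto its image, then push forward. The one substantive difference is in how global injectivity is obtained. The paper embeds $M$ into a \emph{no-return extension} $(N,g)$ (Lemma~\ref{noreturn}) and works with the map $\Phi:\Sigma\times\R\to SN$ on the entire product; the no-return property forces injectivity on all of $\Sigma\times\R$ at once, and invariance of domain immediately makes $\Phi$ a diffeomorphism onto an open set containing $W_\Sigma$. Your version instead proves injectivity only on the closed set $\widehat W_\Sigma$ and then invokes a ``standard inverse-function-theorem argument'' to pass to an open neighbourhood. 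That step is correct when $\widehat W_\Sigma$ is compact, but $\Sigma$ is merely open and need not have compact closure, so the passage from injective immersion on $\widehat W_\Sigma$ to diffeomorphism on a neighbourhood requires a little extra care (e.g.\ an exhaustion argument). The paper's choice of extension sidesteps this entirely. Otherwise the arguments coincide.
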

	
	\begin{remark}
	The Lemma can be strengthened to allow $\Sigma\subset \partial_+SM$ with $\Sigma \cap S\partial M \neq \emptyset$. In that case the extended vector fields are continuous on $W_\Sigma$ and smooth on $W_\Sigma\backslash S\partial M$. (One can show that the map $\Phi$ below is a homeomorphism on $\Sigma\times \R$ and an immersion in $\Sigma\backslash S\partial M\times \R$. Since we do not use the stronger result, we omit the details.)
	\end{remark}
	
\begin{proof}
Let $(N,g)$ be a \textit{no return extension} of $M$ (cf. Lemma \ref{noreturn}) and denote the geodesic flow on $N$ also by $\varphi_t$. 
We claim that the map
\begin{equation*}\label{comframe1}
\Phi: \Sigma \times \R\rightarrow SN,\quad (x,v,t)\mapsto \varphi_t(x,v)
\end{equation*}
 is a diffeomorphism onto its image. Injectivity  follows immediately from the no-return property: If $\Phi(x,v,t)=\Phi(y,w,s)$, then  $\gamma_{x,v}$ enters $M$ both at times $0$ and $t-s$, which is impossible unless $(x,v,t)=(y,w,s)$. It remains to prove that $\Phi$ is an immersion, so let us compute its derivative at $(x,v,t)\in \Sigma\times \R$: For a tangent vector $\xi \oplus a \partial_t \in T_{(x,v)}\Sigma\oplus T_t\R$ we have 
 \[ 
\Phi_*(\xi\oplus a \partial_t) = \d \varphi_t(x,v)(\xi) + a X(\varphi_t(x,v))\in T_{\varphi_t(x,v)}SN.
\] If $\Phi_*(\xi\oplus a \partial_t) =0$, then the previous display implies $\xi + a X(x,v) = 0$ and as $X$ is transversal to $\Sigma$, we must have $a=0$ and $\xi=0$. Hence $\Phi$ is an immersion and the claim follows from invariance of domain.\\
Now extend the vector fields $ P_1,\dots, P_{2d-2}$ to  $t$-independent smooth vector fields  $\tilde P_1,
\dots,\tilde P_{2d-2}$ on  $\Sigma\times \R$. Then  $\{\partial_t,\tilde P_1,\dots,\tilde P_{2d-2}\}$ is a commuting frame on $\Sigma \times \R$ which pushes forward  along $\Phi$ to a commuting frame $\{X,P_1,\dots,P_{2d-2}\}$ on $\Phi(\Sigma\times \R)\subset SN$. Restricting to $W_\Sigma=\Phi(\Sigma\times \R)\cap S M$  finishes the proof.
\end{proof}

\begin{proof}[Proof of Proposition \ref{prop_intfactor}]
Let us first remark why \ref{prop_intfactoriii} holds true. The inverse $U_\A^{-1}$ satisfies the equation $XU_\A^{-1} - U_\A^{-1}\A=0 $ and forward estimates can be derived with the same arguments as for $U_\A$. Further, if $\A$ is $\u(m)$-valued, then $U_\A\in U(m)$, which is compact. In particular $\Vert U_\A\Vert_{L^\infty(SM)}$ can be bounded by an absolute constant and no exponentials arise below.\\

To prove part \ref{prop_intfactori}, fix $(x,v)\in SM$ and note that $U(t)=U_\A(\varphi_t(x,v))$ solves   \[\dot U + \A(\varphi_t(x,v)) U = 0 \text{ for } 0\le t \le\tau(x,v)\quad  \text{ and } \quad U(\tau(x,v))=\id.
 \] 
	Let $v(t)= \vert U(t) \vert_F^2$ (with $\vert \cdot \vert_F$ the Frobenius norm), then
	$
	\dot v(t) = 2 \langle \dot U(t),U(t)\rangle_F = 2 \langle - \A(\varphi_t(x,v)) U(t),U(t)\rangle_F \le 2 \vert \A(\varphi_t(x,v)) \vert_F \cdot \vert U(t) \vert^2_F,
	$
	where we used the Cauchy-Schwarz inequality and the sub-multiplicativity of the Frobenius-norm. Thus by Gronwall's inequality (with reversed time) we have
	\begin{equation*}
	v(t) \le v(\tau(x,v)) \exp\left(\int_t^{\tau(x,v)} 2 \vert \A(\varphi_s(x,v)) \vert_F \d s \right),\quad 0\le t \le \tau(x,v).
	\end{equation*}
	Choose $t=0$, such that the  left hand side becomes $\vert U_\A(x,v)\vert_F^2$. Note that $v(\tau(x,v))=\vert \id \vert_F^2=m$ and crudely bound the integral in the exponential by $2 \tau_\infty  \Vert \A \Vert_{L^\infty(M)}$. This concludes the proof of \ref{prop_intfactori}.\\
	
	In order to show \ref{prop_intfactorii}, we use the following inequality, which (in the unitary version) appears as part of Lemma 5.2 in \cite{MNP19}: If $\A,F:SM\rightarrow \C^{m\times m}$ are continuous and $G\in C(SM,\C^{m\times m})$ is the unique solution to $(X+\A)G=-F$ on $SM$ and $G=0$ on $\partial_-SM$, then
	\begin{equation}\label{pf_intfactor1}
	\Vert G \Vert_{L^\infty(SM)} \le m\tau_\infty \exp(2\tau_\infty \Vert \A \Vert_{L^\infty(SM)})\cdot \Vert F\Vert_{L^
	\infty(SM)}.
	\end{equation}
	We repeat its proof: One readily checks that 
		\begin{equation*}
			G(x,v) = -U_\A(x,v) \int_0^{\tau(x,v)} U_\A^{-1}F(\varphi_t(x,v)) \d t ,\quad (x,v)\in SM
		\end{equation*}
	and thus  $\Vert G\Vert_{L^\infty(SM)}\le \tau_\infty \Vert U_\A \Vert_{L^\infty(SM)} \Vert U_\A^{-1} \Vert_{L^\infty(SM)} \Vert F\Vert_{L^\infty(SM)}$. The norms of $U_\A^{\pm1}$ can be bounded with \ref{prop_intfactori} and thus \eqref{pf_intfactor1} follows.\\
	
	To proceed, take $\Sigma\subset \partial_+SM\backslash S\partial M$ a small open subset (such that it admits a commuting frame). Let $P_1,\dots,P_{2d-2}$ be the vector fields on $W_\Sigma$, as provided by Lemma \ref{commutingframe} and write $P^\alpha = P_1^{\alpha_1}\cdots P^{\alpha_{2d-2}}_{2d-2}$ for a multi-index $\alpha\in \Z^{2d-2}$. We claim that
	\begin{equation}\label{pf_intfactor2}
	\begin{split}
	\Vert U_\A \Vert_{k,\Sigma} &\overset{\mathrm{def}}{=}  \sup_{j+\vert \alpha\vert =  k}\Vert X^j P^\alpha U_\A  \Vert_{L^\infty(W_\Sigma)}
	\lesssim_{k,\Sigma}   e^{(2k+1)\tau_\infty \Vert \A\Vert_{L^\infty(SM)}}	
	\Vert \A \Vert_{C^k(SM)}^k
	\end{split}
	\end{equation}
	for all $k\in \Z_{\ge 0}$.
	Since finitely many such sets $\Sigma_1,\dots,\Sigma_n$ suffice to ensure $K\subset \bigcup_i W_{\Sigma_i}$, we have
	$
	\Vert U_\A\Vert_{C^k(SM)}\le \sum_i\sum_{\ell\le k} \Vert U_\A \Vert_{\ell,\Sigma_i} \lesssim_k 
	 e^{2(k+1)\tau_\infty \Vert \A\Vert_{L^\infty(SM)}}	
	(1+ \Vert \A \Vert_{C^k(SM)})^k
	$	
	 and \ref{prop_intfactorii} follows.\\
	We prove \eqref{pf_intfactor2} by induction over $k\in \Z_{\ge 0}$. The case $k=0$ follows from part \ref{prop_intfactori}, so let $k\ge 1$ and assume the result is true for $k-1$.  Consider $G =  X^jP^\alpha U_\A$  for an integer $j\ge 0$ and multi-index $\alpha$ such that  $j+\vert \alpha \vert = k$. We have
	\[
	(X+\A)G = [\A,X^jP^\alpha] U_\A \text{ on } SM\quad \text{ and }\quad G = 0 \text{ on } \partial_-SM,
	\]
	where $[\cdot,\cdot]$ denotes the commutator and the zero boundary values follow from $\A$ having compact support and thus $U_\A$ being constant near $\partial_-SM$. By \eqref{pf_intfactor1} we conclude that
	$
	\Vert X^jP^\alpha U_\A\Vert_{L^\infty(W_\Sigma)} \lesssim e^{2\tau_\infty\Vert\A\Vert_{L^\infty(SM)}} \cdot \Vert [\A,X^jP^\alpha] U_\A \Vert_{L^\infty(W_\Sigma)}
	$
	and since $ [\A,X^jP^\alpha]$ is a  differential operator on $SM$ of order $k-1$ and with continuous coefficients $\lesssim_{k} \Vert \A \Vert_{C^k(SM)}$, we have
	\begin{equation}
	\Vert X^jP^\alpha U_\A\Vert_{L^\infty(W_\Sigma)} \lesssim_{k} e^{2\tau_\infty\Vert\A\Vert_{L^\infty(SM)}} \cdot \Vert \A\Vert_{C^k(SM)} \cdot \Vert U_\A\Vert_{k-1,\Sigma}.
	\end{equation}
	Inequality \eqref{pf_intfactor2} follows from the induction hypothesis and we are done.
\end{proof}

\subsection{Consequences and further forward estimates} We first recall that the standard linear $X$-ray transform
\[
\I: C^\infty(SM)  \rightarrow  C^\infty(\partial_+SM ),\quad \I F(x,v) = \int_0^{\tau(x,v)} F(\varphi_t(x,v)) \d t,
\]
is continuous as map $H^k(SM)\rightarrow H^k(\partial_+SM)$ for all $k\ge 0$ \cite[Theorem 4.2.1]{Sha94}.\footnote{Alternatively one could start with a forward estimate for $\I$ with respect to different function spaces and obtain corresponding results for weighted and non-abelian $X$-ray transforms.} Independently of Theorem \ref{thm_mainintfac}, this yields the following:

\begin{corollary} \label{cor_wxray} Let $f\in C^\infty(M,\C^m)$ and $ W\in C^\infty(SM,\C^{m\times m})$. Then 
\begin{equation}
\Vert I_W f \Vert_{H^k(\partial_+SM)} \lesssim_k \Vert W \Vert_{C^k(SM)} \cdot \Vert f \Vert_{H^k(SM)}\quad k\ge 0.
\end{equation}
\end{corollary}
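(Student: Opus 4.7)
The plan is to reduce immediately to the known continuity of the unweighted transform. Since $Wf$ depends on $(x,v)$ only through $W(x,v)\cdot f(\pi(x,v))$, we have the trivial identity $I_W f = \I(g)$ with $g \in C^\infty(SM,\C^m)$ given by $g(x,v) = W(x,v)\, f(\pi(x,v))$. Applying the cited Sharafutdinov bound componentwise, we obtain
\begin{equation*}
\Vert I_W f \Vert_{H^k(\partial_+ SM)} = \Vert \I g \Vert_{H^k(\partial_+ SM)} \lesssim_k \Vert g \Vert_{H^k(SM)},
\end{equation*}
so the whole game is to bound $\Vert Wf \Vert_{H^k(SM)}$ by $\Vert W \Vert_{C^k(SM)} \cdot \Vert f \Vert_{H^k(SM)}$ (where $f$ is identified with its pullback $f\circ\pi$, for which $\Vert f\circ\pi\Vert_{H^k(SM)} \lesssim \Vert f\Vert_{H^k(M)}$ via the submersion $\pi$).

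The second step is the standard product (Moser-type) estimate. Fixing a finite atlas of charts on $SM$ together with a subordinate partition of unity, one localises and expands derivatives of order $\le k$ of the product $Wf$ via the Leibniz rule as finite sums
\begin{equation*}
\partial^\gamma (Wf) = \sum_{\alpha+\beta=\gamma} \binom{\gamma}{\alpha} (\partial^\alpha W)(\partial^\beta f), \qquad |\gamma|\le k.
\end{equation*}
Each summand is estimated in $L^2$ by placing $\partial^\alpha W \in L^\infty$ and $\partial^\beta f \in L^2$, yielding $\Vert (\partial^\alpha W)(\partial^\beta f)\Vert_{L^2} \le \Vert W\Vert_{C^k(SM)} \Vert f\Vert_{H^k(SM)}$. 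Summing over $\gamma$ and combining the chart estimates gives the required bound.

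The only mild subtlety is a clean choice of norms on the manifold so that the Leibniz expansion is well-defined globally (one uses a fixed reference connection on $SM$, or simply local coordinates together with the partition of unity), but this is routine and introduces only implicit constants depending on $k$ and $(M,g)$. There is no genuine obstacle here: the corollary is purely a consequence of (i) the forward mapping property of the unweighted $X$-ray transform and (ii) the fact that multiplication by a $C^k$ function is bounded on $H^k$ with operator norm controlled by the $C^k$ norm of the multiplier. Accordingly, the estimate is established without invoking Theorem~\ref{thm_mainintfac}, as announced.
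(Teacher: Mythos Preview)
Your proof is correct and follows essentially the same approach as the paper: write $I_W f = \I(Wf)$, invoke the $H^k$-continuity of the unweighted transform $\I$, and bound $\Vert Wf\Vert_{H^k(SM)}$ via the multiplication estimate together with the boundedness of $\pi^*:H^k(M)\to H^k(SM)$. The paper states this in one sentence while you spell out the Leibniz-rule argument for the multiplication bound, but the logical content is identical.
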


\begin{proof}
As $I_Wf = \I(Wf)$, this follows immediately from the $H^k$-continuity of $\I$ (in its straightforward extension to vector-valued functions) and the fact that pull-back by $\pi:SM\rightarrow M$ yields a bounded linear map $\pi^*:H^k(M)\rightarrow H^k(SM)$.
\end{proof}

Further, using Lemma \ref{lem_pseudolin} (pseudo-linearisation) and Theorem \ref{thm_mainintfac} we obtain the following forward-estimates for the non-abelian $X$-ray transform:

\begin{corollary}\label{cor_naxray}
Let $\Phi ,\Psi \in C^k(M,\C^{m\times m})$, then \[\Vert C_\Phi - C_\Psi \Vert_{H^k(\partial_+SM)} \le c_k(\Phi,\Psi) \cdot \Vert \Phi - \Psi \Vert_{H^k(M)},\quad k\ge 0,
\]
where \[c_k(\Phi,\Psi) = c_{1,k} \exp (c_{2,k} \Vert \Phi \Vert_{L^\infty(M)} + \Vert \Psi \Vert_{L^\infty(M)} ) \cdot (1+ \Vert \Phi \Vert_{C^k(M)}+  \Vert \Psi \Vert_{C^k(M)})^{2k}\]
for constants $c_{1,k},c_{2,k}$ only depending on $(M,g)$ and $m$. Further, if $\Phi,\Psi$ take values in $\u(m)$, the exponential factors can be dropped.
\end{corollary}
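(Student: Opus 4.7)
The plan is to combine the pseudo-linearisation identity of Lemma \ref{lem_pseudolin} with the forward estimate for weighted $X$-ray transforms (Corollary \ref{cor_wxray}) and the integrating-factor bounds of Theorem \ref{thm_mainintfac}. I would take $R_\Phi, R_\Psi$ to be the integrating factors supplied by Theorem \ref{thm_mainintfac}, so that each of $\Vert R_\Phi^{\pm 1}\Vert_{C^k(SM)}, \Vert R_\Psi^{\pm 1}\Vert_{C^k(SM)}$ is bounded by $c_{1,k}\exp(c_{2,k}\Vert\cdot\Vert_{L^\infty})(1+\Vert\cdot\Vert_{C^k})^k$, with constants depending only on $(M,g)$ and $m$. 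Lemma \ref{lem_pseudolin} then gives
\[
C_\Phi-C_\Psi \;=\; R_\Phi \cdot I_{\mathcal{W}_{\Phi,\Psi}}(\Phi-\Psi)\cdot \alpha^* R_\Psi^{-1}\quad\text{on }\partial_+SM,
\]
with $\mathcal{W}_{\Phi,\Psi}(x,v):A\mapsto R_\Phi^{-1}(x,v)\,A\,R_\Psi(x,v)$.

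I would estimate the three factors separately. Corollary \ref{cor_wxray} handles the middle one,
\[
\Vert I_{\mathcal{W}_{\Phi,\Psi}}(\Phi-\Psi)\Vert_{H^k(\partial_+SM)} \;\lesssim_k\; \Vert\mathcal{W}_{\Phi,\Psi}\Vert_{C^k(SM)}\cdot \Vert\Phi-\Psi\Vert_{H^k(M)},
\]
and a Leibniz expansion of the pointwise product defining $\mathcal{W}_{\Phi,\Psi}$ yields $\Vert\mathcal{W}_{\Phi,\Psi}\Vert_{C^k(SM)}\lesssim_k \Vert R_\Phi^{-1}\Vert_{C^k}\Vert R_\Psi\Vert_{C^k}$. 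For the outer factors, $R_\Phi\vert_{\partial_+SM}$ inherits its $C^k$-norm directly from $SM$, and for $\alpha^*R_\Psi^{-1}$ I would work inside the no-return extension $M_1$ introduced in the proof of Theorem \ref{thm_mainintfac}: on $M_1$ the integrating factor $R_\Psi^{-1}$ is globally smooth and constant near $S\partial M_1$, while the scattering relation of $M\subset M_1$ is smooth with $C^k$-norm depending only on $(M,g)$, yielding $\Vert\alpha^*R_\Psi^{-1}\Vert_{C^k(\partial_+SM)}\lesssim_M \Vert R_\Psi^{-1}\Vert_{C^k(SM)}$. The routine $C^k$-times-$H^k$ multiplier estimate then combines these into
\[
\Vert C_\Phi-C_\Psi\Vert_{H^k(\partial_+SM)} \;\lesssim_k\; \Vert R_\Phi\Vert_{C^k}\Vert R_\Phi^{-1}\Vert_{C^k}\Vert R_\Psi\Vert_{C^k}\Vert R_\Psi^{-1}\Vert_{C^k}\cdot\Vert\Phi-\Psi\Vert_{H^k(M)}.
\]

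Inserting the four bounds from Theorem \ref{thm_mainintfac} and collecting polynomial contributions via elementary inequalities such as $(1+a)(1+b)\le (1+a+b)^2$ produces the asserted form after re-labelling the constants $c_{1,k}, c_{2,k}$; the $\mathfrak{u}(m)$-valued case is then immediate from the last clause of Theorem \ref{thm_mainintfac}, since all exponentials disappear. The main technical nuisance in this scheme is the smoothness of $\alpha^* R_\Psi^{-1}$ at the glancing region $S\partial M$, where the scattering relation is generically non-smooth; this is finessed by passing to the no-return extension, which also underlies the construction of the integrating factors themselves.
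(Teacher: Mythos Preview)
Your argument is correct and follows essentially the same route as the paper: pseudo-linearisation via Lemma~\ref{lem_pseudolin}, the weighted $X$-ray forward bound of Corollary~\ref{cor_wxray}, the $C^k$-multiplier estimate for the outer factors $R_\Phi$ and $\alpha^*R_\Psi^{-1}$, and finally the integrating-factor bounds of Theorem~\ref{thm_mainintfac}. The paper handles the $\alpha^*R_\Psi^{-1}$ factor by simply noting that $\alpha$ is a diffeomorphism of $\partial_+SM$ onto $\partial_-SM$ (which is the standard fact for strictly convex boundary), whereas you supply a more explicit justification via the no-return extension; either works, and your extra care at the glancing region is a reasonable way to make that step transparent.
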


\begin{proof}
We use the pseudo-linearisation identity $C_\Phi - C_\Psi = R_\Phi\cdot  I_{\mathcal{W}_{\Phi,\Psi}} (\Phi - \Psi)\cdot \alpha^*R_\Psi^{-1}$ from Lemma \ref{lem_pseudolin} for the integrating factors provided by Theorem \ref{thm_mainintfac}. The integrating factor $R_\Psi$, acting via multiplication on $H^k(\partial_+SM)$, has operator norm $\le \Vert R_\Phi \Vert_{C^k(\partial_+SM)} \le \Vert R_\Phi \Vert_{C^k(SM)}$. A similar bound holds for $\alpha^*R_\Psi^{-1}$, as $\alpha$ is a diffeomorphism and thus, by Corollary \ref{cor_wxray}, we obtain
\begin{equation*}
\Vert C_\Phi - C_\Psi \Vert_{H^k(\partial_+SM)} \lesssim_k \Vert R_\Phi \Vert_{C^k(M)}  \Vert \mathcal{W}_{\Phi,\Psi} \Vert_{C^k(SM)}  \Vert \Phi - \Psi \Vert_{H^k(M)}  \Vert R_\Psi^{-1} \Vert_{C^k(SM)}.
\end{equation*}
As $  \Vert \mathcal{W}_{\Phi,\Psi} \Vert_{C^k(SM)}  \le \Vert R_\Psi \Vert_{C^k(SM)} \Vert R_\Psi \Vert_{C^k(SM)}$, the proof is finished by applying the estimates from Theorem \ref{thm_mainintfac}.
\end{proof}

\newpage

   \section{Local Inversion of Scattering Operators} \label{s_microlocal}
This section prepares the local stability estimate from Section \ref{s_robust} by proving a quantitative version of the microlocal argument that underlies Uhlmann and Vasy's method from \cite{UhVa16}.\\ Their argument relies on the following phenomenon: In the context of Melrose's scattering calculus, ellipticity of an operator near a boundary point yields local injectivity. More precisely, if $X$ is a manifold with boundary  and $A: C_c^\infty(X^\interior) \rightarrow C^\infty(X^\interior)$ is a (classical) {\it scattering} pseudodifferential operator ($\psi$do),  then the leading order behaviour at $\partial X$  is captured by its \textit{scattering principal symbol}, which is a smooth function $\sigma_\sc: {}^\sc T_{\partial X}^*X\rightarrow \C$, defined on the total space of the scattering cotangent bundle over $\partial X$. Ellipticity at $p\in \partial X$ then means that
\begin{equation} \label{sclowerbound}
\inf_{\zeta\in {}^\sc T_{p}^*X }\vert \sigma_{\sc}(p,\zeta) \vert > 0
\end{equation}
and implies the existence of a neighbourhood $O\subset X$ of $p$ for which
\begin{equation}
 \ker A \cap \{u \in L^2(X):  \supp(u) \subset O\} = 0.
\end{equation}
Together with the Fredholm property between appropriate function spaces this can be upgraded to a stability estimate for functions supported in $O$. The purpose of this section is to show that the size of $O$ as well as constants in a stability estimate can be controlled in terms of a lower bound on the scattering principal symbol and an upper bound on a fixed semi-norm of $A$.\newline
To formulate the theorem, let $X$ be a compact manifold with boundary, fix a boundary defining function $\rho:X\rightarrow [0,\infty)$ and write $B(\partial X, h) = \{x \in X: \rho(x) < h\}$. Then in terms of the locally convex spaces 
\begin{itemize}
\item $\Psi^{m,\ell}_\sc(X) = $ Fr{\'e}chet space of classical scattering $\psi$do's of order $(m,\ell)$
\item $H^{s,r}_\sc(X) =$ Hilbert space of Sobolev-functions of regularity $(s,r)$,
\end{itemize}
discussed in Subsection \ref{ss_sc} below, our result reads as follows:

\begin{theorem}[Local inversion of scattering operators]\label{thmini}
Let $ V \subset \partial X$ be open and $K\subset X$ compact  with  $K\cap \partial X \subset V$. Suppose $A\in \Psi^{m,\ell}_\sc(X)$ satisfies 
\begin{equation}\label{thminilambda}
\lambda(A) = \inf\{ \vert \sigma_\sc(A) (z,\zeta)\vert: z\in V: \zeta\in {}^\sc T_z^*X \}>0.
\end{equation}
\begin{enumerate}[label=(\roman*)]
	 \item \label{thminia} There are $h, C>0$ such that all functions $u \in L^2(X) $ with support contained in $K\cap B(\partial X, h)$ obey the estimate
\begin{equation} \label{thminieq}
\Vert u \Vert_{L^2(X) } \le C \Vert A u \Vert_{H_\sc^{-m,-(d+1 + 2\ell)/2}(X)}.
\end{equation}
\item \label{thminib} As $A$ varies, the constants $h(A)$ and $C(A)$ 
satisfy 
\begin{equation} \label{thminibeq}
C(A) \vee h(A)^{-1} \le \omega(\Vert A \Vert \vee \lambda(A)^{-1} )
\end{equation}
for a non-decreasing function $\omega:[0,\infty)\rightarrow [0,\infty)$ (of polynomial growth) and a continuous $\Psi_\sc^{m,\ell}$-semi-norm $\Vert \cdot \Vert $.
\end{enumerate}
\end{theorem}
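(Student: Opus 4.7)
The plan is to construct a scattering parametrix $B$ for $A$ microlocally near $V\cap\partial X$, and then use the support condition on $u$ to absorb the residual error. The quantitative control in \ref{thminib} will be obtained by tracking seminorms through every step of the construction, rather than invoking the compactness-based injectivity argument of Uhlmann--Vasy, which yields no uniform bounds.

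First I would fix a cutoff $\chi\in C^\infty(X)$ equal to $1$ on a neighborhood of $K\cap\partial X$ with support contained, near $\partial X$, in $V$. By \eqref{thminilambda} the scattering principal symbol $\sigma_\sc(A)$ satisfies $\vert\sigma_\sc(A)\vert\ge\lambda(A)$ on the scattering fibers over $\supp\chi\cap\partial X$, so $\chi/\sigma_\sc(A)$ is a legitimate symbol of order $(-m,-\ell)$. The standard iterative parametrix construction in $\Psi_\sc^{*,*}(X)$ then produces $B\in\Psi_\sc^{-m,-\ell}(X)$ with
\[
BA=\chi+E_1+E_2,
\]
where $E_1\in\Psi_\sc^{-1,1}(X)$ has principal symbol supported in $\supp\chi$ and $E_2\in\Psi_\sc^{-N,-N}(X)$ for any prescribed $N$. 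Because each iterative step is a finite combination of symbol multiplications, cutoffs, and the single inversion $1/\sigma_\sc(A)$, a fixed seminorm of $B$, $E_1$ and $E_2$ is polynomially controlled by some higher seminorm $\Vert A\Vert$ and by $\lambda(A)^{-1}$.

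Next I would invoke the standard mapping property $B\colon H_\sc^{-m,-(d+1+2\ell)/2}(X)\to L^2(X)$, whose operator norm is dominated by a seminorm of $B$, hence of $A$. The crucial quantitative step is that $E_1$, having positive decay index, gains a factor of $h$ when restricted to functions supported in $B(\partial X,h)$: writing $E_1=\rho\,\widetilde E_1$ microlocally, with $\widetilde E_1\in\Psi_\sc^{-1,0}$ bounded on $L^2$ and $\rho\le h$ on $\supp u$, one obtains $\Vert E_1 u\Vert_{L^2}\le C_1 h\Vert u\Vert_{L^2}$. Taking $N$ sufficiently large makes $E_2$ arbitrarily smoothing, so that a similar support-based estimate yields $\Vert E_2 u\Vert_{L^2}\le C_2 h\Vert u\Vert_{L^2}$, with $C_1,C_2$ polynomial in $\Vert A\Vert\vee\lambda(A)^{-1}$.

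For $h$ small enough that $\chi u=u$ (i.e.\ $h$ less than the boundary-defining distance from $K\cap\partial X$ to $\{\chi<1\}$), the identity $u=BAu-E_1u-E_2u$ combined with the above gives
\[
\Vert u\Vert_{L^2}\le\Vert B\Vert\cdot\Vert Au\Vert_{H_\sc^{-m,-(d+1+2\ell)/2}}+(C_1+C_2)h\Vert u\Vert_{L^2};
\]
choosing $h$ so that $(C_1+C_2)h\le 1/2$ absorbs the error term and yields \ref{thminia} with $C\le 2\Vert B\Vert$. Since both $h^{-1}$ and $C$ are then polynomial in $\Vert A\Vert\vee\lambda(A)^{-1}$, part \ref{thminib} follows. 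The main obstacle will be keeping the entire symbolic parametrix construction quantitative: every iteration introduces compositions and the inversion $\sigma_\sc(A)^{-1}$, so one must select the class of seminorms carefully and bound their growth at each stage by a polynomial, and the gain-of-$h$ estimate for $E_1$ must be read off directly from the symbol rather than obtained via any abstract interpolation, density or compactness argument.
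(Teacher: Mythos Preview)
Your approach is essentially the paper's: a one-step scattering parametrix with principal symbol $\chi/\sigma_\sc(A)$ produces a residual one order better at $\partial X$, and the resulting $\rho$-factor is absorbed for $u$ supported in $\{\rho<h\}$. The paper packages this as $A^+A=\id-R_A$ with $\sigma_\sc(R_A)=0$ over $U=\pi^{-1}(V)$, then multiplies by a cutoff $Q$ and inverts $\id-QR_A$ by a Neumann series, but the mechanism is identical.

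Two points need tightening. First, in the paper's convention the residual lands in $\Psi_\sc^{0,-1}$, not $\Psi_\sc^{-1,1}$: only the boundary index improves, because only $\sigma_\sc$ (not the interior principal symbol) is assumed elliptic. For the same reason no iteration can reach $\Psi_\sc^{-N,-N}$; fortunately a single boundary order already suffices, so your $E_2$ is neither attainable nor needed. Second, the absorption step as written fails: factoring $E_1=\rho\,\tilde E_1$ on the left gives $E_1u(x)=\rho(x)\,(\tilde E_1u)(x)$, and the nonlocal $\tilde E_1u$ is not supported where $\rho<h$. You must factor on the right, $E_1=\tilde E_1'\,\rho$ with $\tilde E_1'=E_1\rho^{-1}\in\Psi_\sc^{0,0}$ bounded on $L^2$, so that $E_1u=\tilde E_1'(\rho u)$ and $\|\rho u\|_{L^2}\le h\|u\|_{L^2}$ follows from $\supp u\subset\{\rho<h\}$. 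The paper achieves the same effect by placing a cutoff $q_1$ supported in $\{\rho<2h\}$ on the \emph{left} of the residual, so that $\|\rho q_1\|_{L^\infty}\le 2h$ controls the operator norm directly.
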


The proof of Theorem \ref{thmini} can be sketched as follows: After localising to an $h$-neighbourhood of $V\cap K$ (where $A$ is elliptic), one constructs a parametrix $A^+$ for which the residuals $R_A=\id - A^+ A$ have $L^2$-operator norms of order $O(h)$, such that for $h\ll 1$ a local inverse of $A$ can be obtained by a Neumann series.  In order to derive a quantitative bound as in \ref{thminib} one then needs to find how certain operator norms of $A^+$ and $R_A$ depend on $A$.\\
From the usual construction of parametrises, it is clear that the maps $A\mapsto A^+$ and $A\mapsto R_A$ will be continuous in the appropriate Fr{\'e}chet-topologies, but as the maps are nonlinear, a bound as in \eqref{thminibeq} is not immediate.  However,  using finite order parametrises, one can make microlocal constructions more economic,  such that all quantities depend only on $\lambda(A)$ and fixed semi-norm of $A$ (corresponding to a fixed number of derivatives of its full symbol). \\
This reasoning seems to be part of the microlocal analysis folklore; yet the author is not aware of any reference for it, let alone in the setting of scattering pseudodifferential operators on manifolds.  The novelty and usefulness of Uhlmann-Vasy's argument thus warrant a careful analysis.\newline
Finally,  we remark that making the semi-norm $\Vert \cdot \Vert$ from \eqref{thminibeq} more explicit is possible, but requires to further open up the microlocal analysis machinery at the cost of obscuring the main argument. At the same time the added benefit is minimal, for in later applications $A$ will be constructed in terms of a certain weight function $W$ and the map $W\mapsto A$ is both costly (in the sense that many derivatives of $W$ need to be bounded in order to obtain control of $\Vert A \Vert$) and difficult to analyse quantitatively.

\subsection{The scattering calculus} \label{ss_sc} 

We summarise some aspects of Melrose's scattering calculus  \cite{Mel94} with the purpose of fixing notation and  gathering the most relevant results at one place. See also \cite{Mel94}\cite{Vas}\cite[\textsection2]{UhVa16} and  \cite[\textsection 3.2]{SUV17}.\\
First some general notation. Denote with $\bar \R^d$ the {\it radial compactification} of $\R^d$, obtained by glueing $\R^d$ and $[0,\infty)\times S^{d-1}$ along the identification $x \mapsto (\vert x\vert^{-1}, \vert x\vert^{-1} x)$. More generally, given a vector bundle $E\rightarrow X$, one can radially compactify the fibres  to obtain a bundle $\bar E\rightarrow X$ \cite[\textsection 1]{Mel94}.
Further, we let $\dot C(X) = \bigcap_k \rho^k C^\infty(X)$ denote the space of functions which vanish to infinite order at $\partial X$  (similarly defined over $X\times X$) and note that  the natural inclusion $\R^d\subset \bar \R^d$ induces an isomorphism $\mathcal{S}(\R^d)\cong \dot C(\bar \R^d)$. \newline
We can now recall the definition of $\Psi^{m,\ell}_\sc(X)$, the space of classical scattering pseudodifferential operators on $X$.

\begin{definition}\label{defsc} A linear operator $A:\dot C^\infty(X)\rightarrow \dot C^\infty(X)$ is in $\Psi^{m,\ell}_\sc(X)$, if the following two conditions are satisfied:
\begin{enumerate}[label=(\roman*)]
	\item The Schwartz-kernel of $A$ is smooth away from the diagonal of $X\times X$ and vanishes to infinite order at the boundary.
	\item In local coordinates $(x,y)=(x,y_1,\dots,y_{d-1})$ with $x\vert_{\partial X} = 0$ we have 
	\begin{equation} \label{defsc1}
	Au(x,y) = \int e^{i\xi\frac{x-x'}{x^2} + i\eta\cdot \frac{y-y'}{x}}   a(x,y,\xi,\eta) u(x',y') \d \xi \d\eta \frac{\d x' \d y'}{(x')^{d+1}}, \hspace{-1em}
	\end{equation}
	for all $u\in C^\infty(X)$ with compact support within the chart domain, where $a:(0,\infty)_x\times \R^{d-1}_y \times \R_\xi \times \R^{d-1}_\eta\rightarrow \C$ is smooth and  satisfies
	\begin{equation} \label{defsc2}
	x^{\ell} \langle (\xi,\eta) \rangle^{-m} a(x,y,\xi,\eta) \in C^\infty([0,\infty)_x\times \R^{d-1}_y \times \bar \R^d_{(\xi,\eta)}).
	\end{equation}
\end{enumerate}

\end{definition}
Note that we use the order convention from \cite{UhVa16}, that is, $\Psi_\sc^{m,\ell}(X) $ 
increases as $m$ and $\ell$ increase.
The definition above differs from the (equivalent) one given in \cite{UhVa16} in that it describes $A$ in terms of the local model $[0,\infty)_x\times \R^{d-1}_y$ for $X$ rather than in terms of $\bar \R^d$. The formulation here is for example used in \cite[Proof of Prop.\,4.2]{SUV17} and has the advantage that $(\xi,\eta)$ provide natural coordinates for the scattering cotangent bundle introduced below.\\
For the sake of completeness we mention here that \eqref{defsc2} could be replaced by the condition
\begin{equation}
\vert (x\partial_x)^k \partial_y^\alpha \partial_{(\xi,\eta)}^\beta a(x,y,\xi,\eta)
\vert \lesssim_{k,\alpha,\beta} x^{-\ell} \langle (\xi,\eta)\rangle^{m-\vert \beta \vert } ,\quad (k,\alpha),\beta \in \Z^{d}_{\ge 0}
\end{equation}
to obtain the larger class $\Psi^{m,\ell}_{\mathrm{scc}}(X)$ of (not necessarily classical) scattering $\psi$do's. The  advantage of using classical operators is that their principal symbols can be realised as functions, rather than as elements in a quotient space. In particular there is a natural way to measure their magnitude (in the sense of size of semi-norms), which is crucial for the quantitative aspect of Theorem \ref{thmini}. \\
Finally, we remark that $\Psi^{m,\ell}_\sc(X)$ has a natural Fr{\'e}chet-space structure in which 
a sequence of operators $A_n$ converges to $0$, iff the (weighted) symbols $a_n$ in \eqref{defsc2} converge to $0$ in the $C^\infty$-topology. In this topology $\Psi^{m,\ell}_\sc(X)\subset \Psi_\sc^{m',\ell'}(X)$ is a closed subspace whenever $m\le m'$ and $\ell\le \ell'$\footnote{The equivalent statement is false in $\Psi^{m,\ell}_\mathrm{scc}(X)$, classicality is needed.}.\newline
Let us briefly discuss some key aspects of the  scattering calculus:
The leading order behaviour of an operator $A\in \Psi^{m,\ell}_\sc(X)$ at $\partial X$ can be described in coordinates, where $A$ takes form \eqref{defsc1}, by
\begin{equation} \label{defsc3}
\sigma_\sc(A)(y,\xi,\eta) = x^{\ell} \langle (\xi,\eta) \rangle^{-m} a(x,y,\xi,\eta)\vert_{x=0},
\end{equation}
which makes sense in view of the stated regularity in \eqref{defsc2}. In order to understand $\sigma_\sc$ invariantly, one defines the so called {\it scattering cotangent bundle}\footnote{Formally, one checks that the one-forms $\frac{\d x}{x^2},\frac{\d y_1} {x} ,\dots, \frac{\d y_{d-1}}{x}$ (for local coordinates $(x,y)$ as in \eqref{defsc1}) span a locally free sheaf $\E$ over $C^\infty(X)$. The vector bundle ${}^\sc T^*X\rightarrow X$ is then defined fibre-wise by ${}^\sc T_p^*X =\E(X)/I_p\E(X)$, where $I_p\subset C^\infty(X)$ is the ideal of functions vanishing at $p$, and the smooth structure is chosen such that the natural map $\E(X) \rightarrow C^\infty(X,{}^\sc T^*X)$ is an isomorphism.}  ${}^\sc T^*X\rightarrow X$ with fibres having the following coordinate-description:
\begin{equation}\label{defsc4}
{}^\sc T_p^*X =\left\{ \xi \frac{\d x}{x^2} + \eta \cdot \frac{\d y}{x} \big\vert_p\right\} \equiv \R^d_{(\xi,\eta)}.
\end{equation}
Let ${}^\sc \bar T^*X \rightarrow X$ be the ball-bundle obtained by radially compactifying the fibres of ${}^\sc  T^*X$. Then under the identification indicated in \eqref{defsc4}, definition \eqref{defsc3} yields a smooth map  $\sigma_\sc(A):{}^\sc \bar T^*_{\partial X} X\rightarrow \C$,  defined on the total space of the pull-back of ${}^\sc \bar T^*X$  to $\partial X$. 
We call $\sigma_\sc(A)$ the {\it scattering principal symbol} of the operator $A$.
The principal symbol map $A\mapsto \sigma_\sc(A)$ fits into a split exact sequence of Fr{\'e}chet-spaces:
\begin{equation} \label{sc_ses}
0 \rightarrow \Psi_\sc^{m,\ell-1}(X) \hookrightarrow \Psi_\sc^{m,\ell}(X) \xrightarrow{\sigma_\sc} C^\infty({}^\sc \bar{T}_{\partial X}^*X) \rightarrow 0
\end{equation}
By this we mean that it is a split exact sequence of vector spaces,  with all involved maps being continuous; in particular there is a continuous right inverse $r: C^\infty({}^\sc \bar{T}_{\partial X}^*X)  \rightarrow \Psi^{m,\ell}_\sc(X)$ to $\sigma_\sc$.\\
The scattering principal symbol is also called `principal symbol at finite points' and can be complemented by $\sigma_p$, the `principal symbol at fibre infinity'. While the joint symbol $(\sigma_p,
\sigma_\sc)$ is needed, e.g. for regularity questions, for our purposes it suffices to keep track of the boundary behaviour.\\
Exactness of \eqref{sc_ses} is stated in  \cite[Prop.\,20]{Mel94} (where the scattering principal symbol is called `normal operator' and denoted $N_\sc$), while a continuous linear right split (also called quantisation map) is discussed below equation (5.30) in the same notes.
Finally we remark here that our definition of $\sigma_\sc$ differs from the one in \cite{UhVa16}, where the authors do not incorporate the pre-factor $\langle (\xi,\eta)\rangle^{-m}$ in \eqref{defsc3}, which implies that ellipticity is witnessed by a lower bound $\vert \sigma_\sc\vert \gtrsim \langle (\xi,\eta)\rangle^{m}$ rather than   $\vert \sigma_\sc \vert \gtrsim 1$ as in \eqref{sclowerbound}.\newline
Next, we note that the product of two scattering $\psi$do's is again a scattering $\psi$do. In fact  multiplication of operators yields a bilinear continuous map
\begin{equation}
\Psi^{m,\ell}_\sc(X)\times \Psi^{m',\ell'}_\sc(X)\rightarrow \Psi_\sc^{m+m',\ell+\ell'}(X)
\end{equation}
and the principal symbol behaves multiplicatively, that is
\begin{equation}\label{sigmamult}
\sigma_{\sc}(AB) = \sigma_\sc(A) \cdot \sigma_\sc(B).
\end{equation}
The continuity claim can be verified by keeping track of semi-norms, when proving that scattering operators are closed under multiplication and is a direct consequence of \cite[Prop.\,3.5]{Vas}; for \eqref{sigmamult} see also  \cite[eqn.\,(5.1) and  (5.14)]{Mel94}. \newline
 Finally, a natural scale of Hilbert-spaces that scattering operators act on, is provided by $H_\sc^{s,r}(X)$. On  $\bar \R^d$ (the radial compactification of $\R^d$) these spaces can be defined in terms of the standard Sobolev-space on $\R^d$ as \begin{equation}
H^{s,r}(\bar \R^d) = \langle z\rangle^{-r} H^s(\R^d_z).
\end{equation}
In general, $H_\sc^{s,r}(X)$ is defined by locally identifying $X$ with open subsets of $\bar \R^d$. For $s\ge 0$ they are related to the standard Sobolev-spaces $H^s(X)$ as follows:
\begin{equation}\label{sobolevcomparison}
\begin{cases}
H^s(X) \subset H^{s,r}_\sc(X) & \text{ for } r\le -\frac{d+1}2 \\
  H^{s,r}_\sc(X)\subset H^s(X) & \text{ for } r\ge -\frac{d+1}2 + 2 s
  \end{cases}
\end{equation}
An operator $A\in 
\Psi^{m,\ell}_\sc(X)$ then is continuous as map  $A:H^{s,r}_\sc(X)\rightarrow H^{s-m,r-\ell}_\sc(X)$ and indeed the inclusion
\begin{equation}\label{ctsaction}
\Psi_\sc^{m,\ell}(X) \hookrightarrow  \mathcal{B}(H^{s,r}_\sc(X), H^{s-m,r-\ell}_\sc(X))
\end{equation}
into the space of bounded linear operators is continuous.
The statements above are proved in \cite[Section 2]{UhVa16} and \cite[Section 3.8]{Vas}, modulo continuity of \eqref{ctsaction}, which follows from the open mapping theorem.
   
\subsection{Proof of Theorem \ref{thmini}}  As outlined above, we want to construct a local, finite order parametrix for the scattering operator $A$. On the level of principal symbols this corresponds to composition with the map $z\mapsto 1/z$, suitably cut off near zero.  We thus start with a lemma that provides norm bounds for this composition map.
To this end let $\varphi:\C\rightarrow [0,1]$ be a smooth function, vanishing near zero and constant $\equiv 1$ for $\vert z \vert\ge 1$.  Write $\varphi_t(z) = \varphi(z/t)$ for $t>0$ and define
\begin{equation}
\inv_t :C^\infty(M)\rightarrow C^\infty(M),  \quad u \mapsto \left(x\mapsto \frac{\varphi_t(u(x))}{u(x)}\right)
\end{equation}
on an arbitrary compact manifold $M$ (with or without boundary), which will later be taken equal to the total space of ${}^\sc\bar T_{\partial X}^*X$. Then
\begin{lemma}
The map $\inv_t:C^\infty(M)\rightarrow C^\infty(M)$ is continuous with respect to the $C^\infty$-topology; further, for every $k\in \Z_{\ge 0}$ there exists $C=C(k)>0$ with
\begin{equation*}
\Vert \inv_t(u) \Vert_{C^k(M)} \le C \left( 1 + 1/t\right)^{k+1} \left(1 + {\Vert u \Vert_{C^k(M)}}\right)^k,\quad u\in C^\infty(M),t>0.
\end{equation*}
\end{lemma}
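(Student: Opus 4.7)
The plan is to reduce the statement to a bound on derivatives of one fixed smooth function via rescaling, and then apply Fa\`a di Bruno's formula.

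The first step is to observe that $\varphi_t(z)/z$ extends smoothly across $z=0$: setting $\chi(z) := \varphi(z)/z$ for $z \neq 0$ and $\chi(0):=0$ gives a smooth function $\chi \in C^\infty(\C,\C)$, because $\varphi$ vanishes in a neighbourhood of $0$; moreover a direct calculation shows $\varphi_t(z)/z = t^{-1}\chi(z/t)$, so that $\inv_t(u) = t^{-1}\chi\circ(u/t)$. Since $\chi(z) = 1/z$ for $|z|\ge 1$, identifying $\C$ with $\R^2$ and differentiating in the real sense, each $\partial^j\chi$ is uniformly bounded on $\C$ (bounded on $|z|\le 1$ by compactness, and of size $O(|z|^{-1-|j|})$ for $|z|\ge 1$). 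In particular $\inv_t(u)\in C^\infty(M)$ whenever $u\in C^\infty(M)$.

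For $C^\infty$-continuity, I would appeal to the standard fact that the Nemytskii operator $v \mapsto \chi\circ v$ is continuous on each $C^k(M)$ (a consequence of Fa\`a di Bruno together with the uniform boundedness of the derivatives of $\chi$). Composing with the continuous maps $u\mapsto u/t$ and multiplication by $t^{-1}$ yields the continuity of $\inv_t$ on each $C^k(M)$, and hence on $C^\infty(M)$. For the quantitative bound, I would apply Fa\`a di Bruno in local coordinates on $M$ (finitely many charts suffice by compactness). For a multi-index $\alpha$ with $|\alpha|\le k$, $\partial^\alpha(\chi\circ(u/t))$ is a finite sum (of length depending only on $k$) of terms of the form
\[
 (\partial^{j}\chi)(u/t)\cdot t^{-|j|}\cdot \partial^{\beta_1}u\cdots \partial^{\beta_{|j|}}u, \qquad |\beta_1|+\cdots+|\beta_{|j|}|=|\alpha|,\quad |\beta_i|\ge 1,
\]
so that $|j|\le k$. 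Using $\|\partial^{j}\chi\|_{\infty}\le C_k$ for $|j|\le k$, bounding each $|\partial^{\beta_i} u|\le \|u\|_{C^k(M)}$, and absorbing $t^{-|j|}\le (1+1/t)^k$, each term is controlled by $C_k(1+1/t)^k(1+\|u\|_{C^k(M)})^k$. Summing and including the overall factor $t^{-1}$ produces the claimed bound with exponent $(1+1/t)^{k+1}$.

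There is no substantive obstacle: the argument is essentially a rescaling trick plus bookkeeping in Fa\`a di Bruno, and the clean identity $\varphi_t(z)/z = t^{-1}\chi(z/t)$ is exactly what makes the powers of $1/t$ come out correctly. The mildest care point is that $u$ is $\C$-valued and $\chi$ is a function of a complex argument, which simply means one differentiates $\chi$ in the real sense on $\R^2$ and uses the multivariate Fa\`a di Bruno; this is routine.
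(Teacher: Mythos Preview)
Your proof is correct and follows essentially the same approach as the paper: both reduce to a Fa\`a di Bruno expansion in local coordinates, using that the outer function has uniformly bounded derivatives. The only cosmetic difference is that the paper first proves the general composition bound $\Vert \chi\circ u\Vert_{C^k}\lesssim \Vert\chi\Vert_{C^k_b}(1+\Vert u\Vert_{C^k})^k$ and then specialises to $\chi(z)=\varphi_t(z)/z$, whereas your rescaling identity $\inv_t(u)=t^{-1}\chi(u/t)$ with a fixed $\chi$ extracts the powers of $1/t$ directly from the chain rule; both routes are equivalent and yield the same exponent.
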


\begin{proof} We prove more generally that composition with $\chi\in C_b^\infty(\C)$ (that is, $\chi:\C\rightarrow \C$ is smooth and all derivatives are bounded) is continuous as map $C^\infty(M)\rightarrow C^\infty(M)$ and we have
\begin{equation}\label{concatenation1}
\Vert \chi \circ u \Vert_{C^k(M)} \lesssim_k \Vert \chi \Vert_{C^k_b(\C)} \cdot \left( 1 + \Vert u \Vert_{C^k(M)} \right)^k
\end{equation}
such that the result follows from setting $\chi(z) = \varphi_t(z)/z$.  For simplicity, we only consider the case that $u$ is real valued and $\chi \in \C_b^\infty(\R,\R)$ (the complex case only requires notational changes) and assume that $M$ has empty boundary, noting that the general case can then be obtained by means of Seeley's extension theorem (Lemma \ref{seeley}).\\
Choose local coordinates $x^1,\dots,x^d$ and note that $\partial^\alpha (\chi \circ u)$  (for $\alpha \in \Z^d$ with $\vert \alpha \vert=k$) may be written as finite linear combination of terms of the form
	\begin{equation}
	P_{m,\beta}u\defn\left (\chi^{(m)}\circ u\right) \cdot \prod_{i=1}^m \partial^{\beta_i} u, \quad m \le k, \beta_i \in \Z_{\ge 0}^{d} \text{ with } \sum_{i=1}^m \vert \beta_i \vert = k.
	\end{equation}
	Let $K\subset M$ be a compact set inside the chart that supports $x_1,\dots,x_d$. Then
	\begin{equation}
	\Vert P_{m,\beta } u \Vert_{L^\infty(K)} \lesssim_{m,\beta}  \Vert \chi \Vert_{C_b^k(\C)}\cdot \Vert u \Vert_{C^k(M)}^m \quad \text{ for all } u \in C^\infty(M)
\end{equation}		
	and
	\begin{equation}
	\Vert P_{m,\beta} u_n - P_{m,\beta} u \Vert_{L^\infty(K)} \rightarrow 0 \quad \text{ when } u_n \rightarrow u \text{ in } C^\infty(M).
	\end{equation}
	Since $C^k(M)$ can be normed by the sum of $\Vert \partial^\alpha \cdot \Vert_{L^\infty(K)}$, where $\alpha$ runs through multi-indices in $\Z_{\ge 0}^d$ with $\vert \alpha \vert\le k$ and $K$ through compacts inside of chart domains, the previous two displays establish the desired result.
\end{proof}

Next, we construct a local, finite order parametrix for an operator $A\in \Psi_\sc^{m,\ell}(X)$ which is elliptic in an open set $U\subset {}^\sc \bar T^*_{\partial X}X$. By this we mean that
\begin{equation}\label{deflambda}
\lambda(A) = \inf_U \vert \sigma_\sc(A) \vert >0,
\end{equation}
which encompasses the definition in Theorem \ref{thmini}, where we have $U=\{(z,\zeta):z\in V,\zeta\in {}^\sc \bar T^*_zX\}$ for some open set $V\subset \partial X$. Then:
\begin{lemma}\label{parametrix}
There exists a local parametrix $A^+\in \Psi_\sc^{-m,-\ell}(X)$ and a residual operator $R_A\in \Psi^{0,0}_\sc(X)$ with the following properties.
\begin{enumerate}[label=(\roman*)]
\item  \label{paramatrix1} We have $A^+A= \id - R_A$ and $\supp \sigma_\sc(R_A) \cap U = \emptyset $
\item \label{parametrix2} Given continuous semi-norms $\Vert \cdot \Vert$ and $\Vert \cdot \Vert'$ on $\Psi_\sc^{-m,-\ell}(X)$ and $\Psi^{0,0}_\sc(X)$ respectively, there exists a continuous semi-norm $\Vert \cdot \Vert''$  on $\Psi^{m,\ell}_\sc(X)$ and an integer $k\ge 0$ such that, as $A$ varies, we have
\begin{equation}
\Vert A^+ \Vert \vee \Vert R_A\Vert' \lesssim_{m,\ell} \left(1 + \lambda(A)^{-1} \right)^{k} \cdot (1 + \Vert A \Vert'' )^k.
\end{equation}
\end{enumerate}
\end{lemma}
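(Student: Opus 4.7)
The plan is to construct $A^+$ by quantising a smoothed pointwise reciprocal of $\sigma_\sc(A)$, using the preceding $\mathrm{inv}_t$-lemma to control symbol seminorms polynomially in $\lambda(A)^{-1}$. Set $b = \mathrm{inv}_{\lambda(A)/2}\bigl(\sigma_\sc(A)\bigr) \in C^\infty({}^\sc \bar{T}^*_{\partial X}X)$. Because $\varphi_{\lambda(A)/2}$ is identically $1$ on $\{|z|\ge \lambda(A)/2\}$ and $|\sigma_\sc(A)| \ge \lambda(A)$ on $U$ by definition of $\lambda(A)$, we have $b = 1/\sigma_\sc(A)$ throughout $U$. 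Now let $r:C^\infty({}^\sc \bar{T}^*_{\partial X}X) \to \Psi_\sc^{-m,-\ell}(X)$ be the continuous right inverse to $\sigma_\sc$ from the split exact sequence \eqref{sc_ses}, and put $A^+ = r(b)$. By the multiplicativity \eqref{sigmamult}, $\sigma_\sc(A^+ A) = b \cdot \sigma_\sc(A)$ equals $1$ on $U$, so $R_A \defn \id - A^+ A \in \Psi_\sc^{0,0}(X)$ (using closure of $\Psi_\sc$ under composition with the correct order shift) has continuous principal symbol $\sigma_\sc(R_A) = 1 - b\cdot\sigma_\sc(A)$ vanishing on the open set $U$; consequently $\supp \sigma_\sc(R_A) \cap U = \emptyset$, which proves \ref{paramatrix1}.

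For \ref{parametrix2} I chain together the continuity statements collected in Section \ref{ss_sc}. Continuity of $\sigma_\sc:\Psi_\sc^{m,\ell}(X)\to C^\infty({}^\sc \bar{T}^*_{\partial X}X)$ bounds $\|\sigma_\sc(A)\|_{C^{k'}}$ by some continuous seminorm of $A$ on $\Psi_\sc^{m,\ell}(X)$, for $k'$ to be chosen large below. The $\mathrm{inv}_t$-lemma then yields
\begin{equation*}
\|b\|_{C^{k'}} \lesssim_{k'} \bigl(1 + 2\lambda(A)^{-1}\bigr)^{k'+1}\bigl(1 + \|\sigma_\sc(A)\|_{C^{k'}}\bigr)^{k'}.
\end{equation*}
Continuity of $r$ transfers this to a bound on the prescribed seminorm $\|A^+\|$; continuity and bilinearity of composition $\Psi_\sc^{-m,-\ell}\times \Psi_\sc^{m,\ell}\to \Psi_\sc^{0,0}$ then give $\|A^+ A\|' \lesssim p(A^+)\cdot q(A)$ for suitable continuous seminorms $p,q$, so that $\|R_A\|' \le \|\id\|' + \|A^+ A\|'$ is controlled in the same way. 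Composing these four linear or bilinear steps, and taking $k'$ large enough to dominate every intermediate seminorm, produces the claimed polynomial inequality with a single continuous seminorm $\|\cdot\|''$ on $\Psi_\sc^{m,\ell}(X)$ and a sufficiently large integer $k$.

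The main obstacle is a purely bookkeeping one: since the target seminorms $\|\cdot\|$ on $\Psi_\sc^{-m,-\ell}(X)$ and $\|\cdot\|'$ on $\Psi_\sc^{0,0}(X)$ are arbitrary continuous Fr\'echet-seminorms, one must at each stage absorb them into a strictly stronger ambient seminorm, which in turn is dominated by finitely many $C^{k'}$-derivatives of the full symbol of $A$. Continuity of each of the four maps in the chain guarantees such an absorption, and the only step at which $\lambda(A)^{-1}$ enters is the $\mathrm{inv}_t$-bound, whose dependence is polynomial; hence the composite estimate stays polynomial in $\lambda(A)^{-1}$ and in $\|A\|''$, as asserted.
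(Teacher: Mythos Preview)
Your proof is correct and follows essentially the same approach as the paper: quantise $\inv_t(\sigma_\sc(A))$ via the right split $r$, define $R_A = \id - A^+A$, and chain together the continuity of $\sigma_\sc$, the $\inv_t$-bound, the continuity of $r$, and bilinear continuity of composition. The only cosmetic difference is your use of the cutoff parameter $\lambda(A)/2$ rather than $\lambda(A)$, which is harmless.
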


\begin{proof}
Let $r:C^{\infty}({}^\sc\bar T^*_{\partial X}X)\rightarrow \Psi_\sc^{-m,-\ell}(X)$ be a continuous right split for the short exact symbol sequence in \eqref{sc_ses} and define
\begin{equation}
A^+ =r\left( \inv_{\lambda(A)}(\sigma_\sc(A))\right) \in  \Psi_\sc^{-m,-\ell}(X).
\end{equation}
Then, as $U\subset  \{(z,\zeta)\in {}^\sc\bar T^*_{\partial X}X: \sigma(A)(z,\zeta)\ge \lambda(A) \}$, we have
\begin{equation}
\sigma_\sc(A^+) = \inv_{\lambda(A)}(\sigma_\sc(A)) = \sigma_\sc(A)^{-1} \quad \text{ on } U.
\end{equation}
In particular, defining $R_A = \id   - A^+ A$, we see from the multiplicativity of principal symbols that $\sigma_\sc(R_A) = 0$ on $U$, such that \ref{paramatrix1} holds true.\\
Next, given a continuous semi-norm $\Vert \cdot \Vert$ on  $\Psi_\sc^{-m,-\ell}(X)$, as $r$ is a continuous {\it linear} map between Fr{\'e}chet spaces, there exists an integer $k\ge 0$ with
\begin{equation*}
\Vert A^+ \Vert \lesssim  \Vert \inv_{\lambda(A)}(\sigma_\sc(A)) \Vert_{C^k({}^\sc\bar T^*_{\partial X}X)} \lesssim (1+\lambda(A)^{-1})^{1+k}\cdot \Vert \sigma_{\sc}(A) \Vert_{C^k({}^\sc\bar T^*_{\partial X}X)}^k, 
\end{equation*} 
with implicit constants uniform in $A\in \Psi^{m,\ell}(X)$ with $\lambda(A)>0$ and where the second inequality follows from the preceding lemma. Finally, $\sigma_{\sc}$ itself is a continuous linear map and thus  $\Vert \sigma_{\sc}(A) \Vert_{C^k({}^\sc\bar T^*_{\partial X}X)}\lesssim \Vert A \Vert'$ for an appropriate semi-norm $\Vert \cdot \Vert''$. This completes the bounds on $A^+$.\\
In order to bound $\Vert R_A \Vert'$ (for a given semi-norm $\Vert \cdot \Vert'$ on $\Psi_\sc^{0,0}(X)$), we use that multiplication of scattering operators gives a continuous bilinear map, such that
\begin{equation}
\Vert R_A\Vert' \lesssim 1 + \Vert A \Vert'''\cdot \Vert  A^+ \Vert''''
\end{equation} 
for an appropriate choice of semi-norms on the right hand side. Combining this with the bounds on $A^+$ the proof is complete.
\end{proof}

We are now in a position to prove a slightly more general version of Theorem \eqref{thmini}, which does not require ellipticity in all fibre-directions. For this recall that have fixed a boundary definition function $\rho:X\rightarrow [0,\infty)$.
   
\begin{proposition} [Microlocal version of Theorem \ref{thmini}]  \label{microlocalversion} Let $\Gamma \subset U \subset {}^\sc \bar T_{\partial X}^*X$ subsets such that  $\Gamma$ is compact and $U$ is open.
	\begin{enumerate}[label=(\roman*)]
	\item  \label{microlocalversion1} 
	Let $A\in \Psi_\sc^{m,\ell}(X)$ with $\lambda(A)>0$, as defined in \eqref{deflambda}. Then there exist constants $h,C>0$ as well as a continuous semi-norm $\Vert \cdot \Vert_0 $ on $\Psi_\sc^{0,0}(X)$  with the following property:
	If $Q=q_1Q_2\in \Psi_\sc^{0,0}(X)$ is the product of a function $q_1\in C^\infty(X)$ and an operator $Q_2\in \Psi_\sc^{0,0}(X)$ such that
	\begin{equation}\label{microlocalinversion1a}
	\Vert \rho q_1 \Vert_{L^\infty(X)} \Vert Q_2\Vert_0 < h \quad \text{ and } \quad \supp \sigma_\sc(Q_2) \subset \Gamma,
	\end{equation}
	then for all $u\in L^2(X)$ we have
	\begin{equation} \label{microlocalinversion1b}
	\Vert u \Vert_{L^2(X)} \le C \Vert Q \Vert_0 \cdot \Vert A u \Vert_{H_\sc^{-m,-(d+1+2\ell)/2}(X)} +  2 \Vert (\id - Q) u \Vert_{L^2(X)}. \hspace{-2.5em}
	\end{equation}
	\item \label{microlocalversion2}
	As $A$ varies in the open set of operators with $\lambda(A)>0$, the constants $h(A)$ and $C(A)$ obey an estimate of the form
	\begin{equation}
	C(A) \vee h(A)^{-1} \le \left(1 + \lambda(A)^{-1} \right)^{k} \cdot (1 + \Vert A \Vert )^k
	\end{equation}
	a continuous semi-norm $\Vert \cdot \Vert$ on $\Psi^{m,\ell}_\sc(X)$ and an integer $k\ge 0$.
	\end{enumerate}
	\end{proposition}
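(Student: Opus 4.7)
The plan is to use the finite-order parametrix from Lemma~\ref{parametrix} together with the microlocal cutoff $Q$ to prove \eqref{microlocalinversion1b} via a standard split-and-absorb argument, with all constants tracked explicitly through Lemma~\ref{parametrix}\ref{parametrix2}. Applying the identity $A^+A = \id - R_A$ and the trivial decomposition $u = Qu + (\id - Q)u$, I would start from
\begin{equation*}
u \;=\; QA^+ Au \;+\; QR_A u \;+\; (\id - Q) u, \qquad u \in L^2(X),
\end{equation*}
where the three terms are the genuine data term, a residual to be absorbed, and the microlocally filtered remainder already present on the right-hand side of \eqref{microlocalinversion1b}.

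For the main term, since $QA^+ \in \Psi^{-m,-\ell}_\sc(X)$ maps $H^{-m,-(d+1+2\ell)/2}_\sc(X)$ continuously to $L^2(X) = H^{0,-(d+1)/2}_\sc(X)$, joint continuity of the composition $\Psi^{0,0}_\sc \times \Psi^{-m,-\ell}_\sc \to \Psi^{-m,-\ell}_\sc$ combined with \eqref{ctsaction} yields
\begin{equation*}
\Vert QA^+ Au\Vert_{L^2(X)} \;\lesssim\; \Vert Q\Vert_0 \, \Vert A^+\Vert' \, \Vert Au\Vert_{H^{-m,-(d+1+2\ell)/2}_\sc(X)}
\end{equation*}
for suitable continuous semi-norms on the respective operator classes. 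For the residual, the key observation is that $\supp \sigma_\sc(Q_2) \subset \Gamma \subset U$ while $\supp \sigma_\sc(R_A) \cap U = \emptyset$, so multiplicativity \eqref{sigmamult} forces $\sigma_\sc(Q_2 R_A) = 0$ and hence $Q_2 R_A \in \Psi^{0,-1}_\sc(X)$ --- the crucial \emph{weight gain}. Since multiplication by $\rho$ lies in $\Psi^{0,-1}_\sc(X)$, one can factor $QR_A = (\rho q_1)\bigl(\rho^{-1} Q_2 R_A\bigr)$ with $\rho^{-1} Q_2 R_A \in \Psi^{0,0}_\sc(X)$ bounded on $L^2$, leading to
\begin{equation*}
\Vert QR_A u\Vert_{L^2(X)} \;\le\; c\,\Vert \rho q_1\Vert_{L^\infty(X)} \Vert Q_2\Vert_0 \Vert R_A\Vert'' \Vert u\Vert_{L^2(X)}
\end{equation*}
for a suitable continuous semi-norm $\Vert \cdot \Vert''$ on $\Psi^{0,0}_\sc(X)$ inherited from the refined bilinear composition into $\Psi^{0,-1}_\sc(X)$.

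Choosing $h = h(A)$ small enough that $c\,h\,\Vert R_A\Vert'' \le \tfrac{1}{2}$ and invoking the smallness assumption \eqref{microlocalinversion1a} makes the residual at most $\tfrac{1}{2}\Vert u\Vert_{L^2}$; absorbing it and setting $C = 2\Vert A^+\Vert'$ (with $\Vert\cdot\Vert_0$ taken as a single continuous semi-norm dominating the two used above) yields \ref{microlocalversion1}. Part \ref{microlocalversion2} then follows directly from Lemma~\ref{parametrix}\ref{parametrix2}, which bounds both $\Vert A^+\Vert'$ and $\Vert R_A\Vert''$ by a polynomial in $1+\lambda(A)^{-1}$ and $1+\Vert A\Vert$ for an appropriate continuous semi-norm $\Vert \cdot\Vert$ on $\Psi^{m,\ell}_\sc(X)$. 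The main obstacle I expect is the clean justification of the weight-gain step: one must verify that the bilinear composition $(Q_2, R_A) \mapsto Q_2 R_A$, restricted to pairs with $\sigma_\sc(Q_2)\sigma_\sc(R_A) = 0$, is continuous into the refined class $\Psi^{0,-1}_\sc(X)$, with semi-norms controlled by those in $\Psi^{0,0}_\sc(X)$. This is folkloric and can be extracted from the standard composition formula together with the continuous linear split of the symbol sequence \eqref{sc_ses} --- writing $AB = (AB - r(\sigma_\sc(A)\sigma_\sc(B))) + r(\sigma_\sc(A)\sigma_\sc(B))$ and observing that the first summand is a jointly continuous $\Psi^{0,-1}_\sc$-valued bilinear map --- but it is the linchpin for the quantitative semi-norm bookkeeping underlying part~\ref{microlocalversion2}.
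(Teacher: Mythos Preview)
Your proposal is correct and follows essentially the same route as the paper. The only cosmetic difference is that you absorb the residual term $QR_A u$ directly into the left-hand side, whereas the paper rewrites the identity as $(\id - QR_A)u = QA^+Au + (\id-Q)u$ and applies the Neumann inverse $N=(\id-QR_A)^{-1}$ with $\Vert N\Vert\le 2$; these two manoeuvres are equivalent and produce the same constants. Your identification of the weight-gain step as the linchpin is spot on, and the paper resolves it exactly as you suggest: it restricts the bilinear composition to the closed subspaces $\Psi^{0,0}_{\sc,\Gamma}(X)\times\Psi^{0,0}_{\sc,\Lambda}(X)$ (with $\Lambda={}^\sc\bar T^*_{\partial X}X\setminus U$), where the product lands in $\Psi^{0,-1}_\sc(X)$, and invokes the fact (noted just below Definition~\ref{defsc}) that $\Psi^{0,-1}_\sc(X)\subset\Psi^{0,0}_\sc(X)$ is a \emph{closed} subspace in the classical calculus, so that continuity into the refined class follows from the open mapping theorem.
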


	Let us first demonstrate how Theorem \ref{thmini} follows from this result:

	\begin{proof}[Proof of Theorem \ref{thmini}]
		We apply Proposition \ref{microlocalversion} with $U=\pi^{-1}(V)$ and $\Gamma=\pi^{-1}(K')$, where $K'\subset V$ is a compact set that contains $K\cap \partial X$ in its interior and $\pi:{}^\sc\bar T^*_{\partial X}X\rightarrow \partial X$ is the natural projection; we denote with $h'$ and $C'$ the constants from \ref{microlocalversion1}. Let  $Q=q_1q_2\in \Psi^{0,0}(X)$ be the product of two functions $q_1,q_2\in C^\infty(X)$ with 
		\begin{equation}
		1_{B(\partial X, h)} \le q_1 \le 1_{B(\partial X, 2h)} \quad \text{ and  }\quad 1_K\le q_2 \le 1_{V'},
		\end{equation} where $h$ remains to be chosen and   $V'$ is a neighbourhood of $K$ with $V'\cap \partial X \subset  K'$. Now let $h>0$ be such that
		\begin{equation}
		\Vert \rho q_1\Vert_{L^\infty(X)} \Vert q_2\Vert \le 2h \Vert q_2\Vert = h',
		\end{equation}
		then \eqref{microlocalinversion1a} is satisfied and we obtain \eqref{microlocalinversion1b}. Since $\{u:  \supp(u) \subset K \cap B(\partial X, h) \} \subset \ker(\id - Q)$, this concludes the proof.
	\end{proof}

	\begin{proof}[Proof of Proposition \ref{microlocalversion}] 
	Let $A^+$ and $R_A$ be as in Lemma \ref{parametrix}. We first estimate the operator norm of $QR_A$, acting on $L^2(X) = H_\sc^{0,-(d+1)/2}(X)$. To this end, we write $QR_A = (\rho q_1)\cdot (\rho^{-1} Q_2 R_A)$ and treat the two factors separately. To estimate the second factor, consider the bilinear continuous map
	\begin{equation}\label{keysequence}
	\Psi^{0,0}_{\sc,\Gamma}(X) \times \Psi^{0,0}_{\sc,\Lambda}(X) \rightarrow \Psi_\sc^{0,-1}(X) \xrightarrow{\times \rho^{-1}} \Psi_\sc^{0,0}(X)\subset \B(L^2(X)),
	\end{equation}
	where the involved spaces and maps are defined as follows: For $L \subset {}^\sc \bar T^\ast_{\partial X}X$ compact we write
$\Psi^{0,0}_{\sc,L}(X)$ for the closed subspace of operators $P\in \Psi_\sc^{0,0}(X)$ with $\supp \sigma_\sc(P) \subset L$; we let $\Lambda = {}^\sc \bar T^*_{\partial X} X\backslash U$, such that $R_A\in \Psi_{\sc,\Lambda}^{0,0}(X)$. Then the first map in 	\eqref{keysequence} is  multiplication, which takes values in $\Psi_\sc^{0,-1}(X)$ as $\Lambda \cap \Gamma = \emptyset$. 
Now $\rho^{-1} Q_2 R_A \in \B(L^2(X))$ is the image of $(Q_2,R_A)$ under the map \eqref{keysequence} and hence its operator norm is bounded by $\Vert Q_2 \Vert_0 \cdot \Vert R_A \Vert_0$ for a continuous semi-norm $\Vert \cdot \Vert_0$ on $\Psi^{0,0}(X)$.
Further, multiplication by $\rho q_1$ has operator norm $\le \Vert \rho q_1 \Vert_{L^\infty(X)}$.  Overall, we get
\begin{equation}
\Vert QR_A \Vert_{L^2(X)\rightarrow L^2(X)} \le \Vert \rho q_1 \Vert_{L^\infty(X)} \cdot \Vert Q_2 \Vert_0 \cdot \Vert R_A \Vert_0.
\end{equation}

Put $h=h (A)= \Vert R_A \Vert_0^{-1}/2$, then if $Q$ obeys \eqref{microlocalinversion1a}, the operator norm of $QR_A$ is bounded by $1/2$, which means that $\id - QR_A$ is invertible in $\B(L^2(X))$. Write  $N=(\id - QR_A)^{-1} \in \B(L^2(X))$ for the inverse, then 
\begin{equation}
u = NQA^+Au + N(\id - Q)u\quad \text{ for all } u \in L^2(X).
\end{equation}	
Now $\Vert N \Vert_{L^2(X)\rightarrow L^2(X)}\le 2$ and thus,  assuming without loss of generality that $\Vert \cdot \Vert_0$ dominates the $L^2(X)$-operator norm, we obtain \eqref{microlocalinversion1b} with $C(A)$ being twice the $H_\sc^{0,-\frac{d+1}{2}}(X)\rightarrow H_\sc^{-m,-\ell-\frac{d+1}{2}}(X)$ operator norm of $A^+$. Finally,  the bound in \ref{microlocalversion2} follows from the one in Lemma \ref{parametrix} and we are done.
	\end{proof}
	
	\subsection{Vector valued case}

Theorem \ref{thmini} works equally well for operators that act between sections of vector bundles. In this section we discuss the necessary changes in the case of trivial bundles (which is all we need in the sequel). \\ Let us write $A\in \Psi^{m,\ell}(X;\C^k)$ for $k\times k$-matrices of operators in $\Psi^{m,\ell}(X)$, understood to act between vector-valued functions 
in the obvious way. The scattering principal symbol is then a matrix-valued map
$
\sigma_\sc(A): {}^\sc \bar T^*_{\partial X}X \rightarrow \C^{k\times k}
$
and, using the notation $\vert M \vert = (M^*M)^{1/2} \in \C^{k\times k}$ for matrices $M \in \C^{k\times k}$, ellipticity of $A$ is witnessed by an inequality of the form
\begin{equation}
\vert \sigma_\sc(A)\vert > \lambda \quad \Leftrightarrow \quad \forall t \in \C^k: \langle \sigma_{\sc}(A) t , t \rangle > \lambda \vert t \vert^2.
\end{equation} 
Using this notation, Theorem \ref{thmini} holds true for $u \in L^2(X,\C^k)$ and is proved in the same way as the scalar case.

   \section{Local Stability of the Weighted X-ray transform}\label{s_robust}
This section is devoted to the proof of the following theorem, which is a more quantitative version of Theorem 1.3 in \cite{PSUZ19}.

	\begin{theorem} \label{linearlocalstable}
		Let $(M,g)$ be a compact Riemannian manifold of dimension $d \ge 3$ and suppose $p\in \partial M$ is a point of strict convexity. Then there exists a smooth function $ \tilde x: M\rightarrow \R$, strictly convex near $p$ and  satisfying
		\begin{equation*}
		\tilde x \le 0 = \tilde x (p) \quad\text{and} \quad \vert \d \tilde x\vert_g \le 1,
		\end{equation*}
		such that for all smooth, invertible matrix weights $W:SM\rightarrow \Gl(m,\C)$ the following holds true:
		\begin{enumerate}[label=(\roman*)]
		\item \label{linearlocalstable1} There exist $C,h>0$ with the following property: For $0<c<h$ let $B=B(p,c/2)$ and $O=\{\tilde x >-c\}$, then 
		\begin{equation}\label{linearlocalstable1eqn}
		\Vert f \Vert_{L^2( B)} \le C \Vert I_W f \Vert_{H^1(\M_O)} \quad \text{for } f\in L^2(M). 
		\end{equation}
		\item \label{linearlocalstable2} As $W$ varies,  the maps $W\mapsto C(W)$ and $W\mapsto h(W)$ obey
		\begin{equation}\label{linearlocalstable2eqn}
		h(W)^{-1}\vee C(W)\le \omega(\Vert W\Vert_{C^k(SM)} \vee \Vert W^{-1} \Vert_{L^\infty(SM)})
		\end{equation} 
		for some non-decreasing $\omega:[0,\infty)\rightarrow [0,\infty)$ and an integer $k\ge 1$.
		\item \label{linearlocalstable3} Under small perturbations of $M$ and $p$, in a sense made precise below, one can choose $\omega$ and $k$ to be constant.
		\end{enumerate}
	   \end{theorem}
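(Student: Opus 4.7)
The plan is to adapt the Uhlmann--Vasy microlocal strategy of \cite{UhVa16, PSUZ19} and feed the resulting scattering operator into the quantitative local inversion bound of Theorem \ref{thmini}. The construction of $\tilde x$ is a standard consequence of strict convexity: near $p$ one chooses, in a suitable chart, a smooth function with positive-definite Hessian in the outward-normal direction and then rescales and truncates it so that $\tilde x \le 0 = \tilde x(p)$ and $|d\tilde x|_g \le 1$ globally while $\tilde x$ remains strictly convex on a fixed neighbourhood of $p$ (cf.\ Section 2 of \cite{PSUZ19}). For all sufficiently small $c > 0$, the region $\Omega_c = \{\tilde x \ge -c\}$ is compact, $O = \{\tilde x > -c\}$ satisfies the foliation condition, and $B(p, c/2) \subset \Omega_c$.

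Next, view $\Omega_c$ as a manifold with artificial boundary $\{\tilde x = -c\}$ and boundary defining function $x = \tilde x + c$. Following the Uhlmann--Vasy recipe, set
$$ A_W = e^{-F/x}\, \chi\, N_W\, \chi\, e^{F/x}, $$
where $N_W = I_W^* I_W$ is the normal operator of the weighted $X$-ray transform, $\chi$ is a smooth cut-off supported in a tubular neighbourhood of the artificial boundary, and $F = F(x)$ is a fixed weight (of the form $F(x) \sim x$ near $x = 0$) chosen so that conjugation by $e^{F/x}$ converts the oscillatory phase of $N_W$ into a Gaussian in the scattering cotangent fibres. A now-standard symbol computation, as in Section 3 of \cite{UhVa16} and Section 4 of \cite{PSUZ19}, shows that $A_W \in \Psi_\sc^{-1,0}(\Omega_c)$ and its scattering principal symbol at a point $q$ of the artificial boundary is given by an integral over the unit sphere of boundary-tangent directions, weighted by a Gaussian factor and by $W^* W$; this is pointwise bounded below by a positive multiple of $\|W^{-1}\|_{L^\infty(SM)}^{-2}\cdot \mathrm{Id}$. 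Theorem \ref{thmini} then produces $h, C > 0$ with
$$ \|u\|_{L^2(\Omega_c)} \le C\, \|A_W u\|_{H^{1,-(d+1)/2}_{\sc}(\Omega_c)} $$
for all $u$ supported within distance $h$ of the artificial boundary and in a compact neighbourhood of $p$.

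To convert this into a bound for $I_W$, note that $A_W u = e^{-F/x}\chi\, I_W^*\bigl(I_W(\chi e^{F/x} u)\bigr)$; the continuity of $I_W^*$ dual to Corollary \ref{cor_wxray} converts the scattering-Sobolev norm of $A_W u$ into the $H^1(\M_O)$-norm of $I_W(\chi e^{F/x} u)$, and choosing $u$ so that $\chi e^{F/x} u = f$ on $B(p, c/2)$ delivers \ref{linearlocalstable1eqn}. For the quantitative claim \ref{linearlocalstable2}, the only places where $W$ enters nonlinearly are (a) the $\Psi^{-1,0}_\sc(\Omega_c)$-semi-norm of $A_W$, which is controlled by a fixed $C^k$-norm of $W$ via the forward estimates of Section \ref{s_forward} (applied to the normal operator and its Schwartz kernel), and (b) the ellipticity constant $\lambda(A_W)^{-1}$, which from the symbol computation is bounded above by a power of $\|W^{-1}\|_{L^\infty(SM)}$. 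Substituting both into the non-decreasing envelope supplied by Theorem \ref{thmini}\ref{thminib} yields \eqref{linearlocalstable2eqn}.

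The main technical obstacle is the careful bookkeeping in step (a): one must isolate a continuous semi-norm on $\Psi^{-1,0}_\sc(\Omega_c)$ that genuinely controls the constants in Theorem \ref{thmini} and show that it is dominated by only finitely many derivatives of the full symbol of $A_W$, which in turn is a polynomial-type expression in $W$ and finitely many of its derivatives. Finally, for \ref{linearlocalstable3}, all ingredients above---the function $\tilde x$, the cut-offs, the weight $F$, and the scattering symbol computation---depend smoothly on the metric and on the base point $p$, so the same integer $k$ and envelope $\omega$ can be chosen uniformly for $(M, g, p)$ varying in a sufficiently small $C^\infty$-neighbourhood, with the precise meaning of ``small perturbation'' to be fixed when this part of the statement is used below.
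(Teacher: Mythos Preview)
Your high-level strategy matches the paper's, but two technical steps are not correct as written and would not go through.

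First, the operator $A_W$ you define is not the one that lies in $\Psi_\sc^{-1,0}$. Writing $A_W = e^{-F/x}\chi N_W \chi e^{F/x}$ with $\chi$ a \emph{spatial} cut-off supported near the artificial boundary is not enough: the Uhlmann--Vasy construction requires a localiser $\chi = \chi(x,y,\omega,\lambda/x)$ that also cuts off in the \emph{scattering angle} $\lambda/x$ (equivalently, in directions nearly tangent to the level sets of $x$). It is precisely this angular localisation, together with the exponential conjugation $e^{\pm 1/x}$ and the prefactor $x^{-2}$, that produces an operator in the scattering calculus with the correct symbol; a purely spatial cut-off of $N_W$ does not. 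The paper works with the explicit integral formula \eqref{defnormal} for $A_W^\chi$ and verifies the scattering-$\psi$do property and the symbol lower bound directly from it (Theorem \ref{voodoo}).

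Second, your conversion from the $A_W$-estimate back to $I_W$ via ``continuity of $I_W^*$ dual to Corollary \ref{cor_wxray}'' does not give what you need. The relevant bound is on $\Vert A_W^\chi f\Vert_{H_\sc^{1,-(d+1)/2}}$, and one cannot obtain this by dualising a forward estimate for $I_W$. The paper instead factors $A_W^\chi f = x^{-2}e^{-1/x} L_W^\mu \tilde I_W(e^{1/x}f)$, where $\tilde I_W$ is the transform on a larger simple domain $\bar U$ and $L_W^\mu$ is an explicit ``backprojection'' involving the localiser $\mu$ (not the adjoint $I_W^*$). Two separate lemmas are then needed: one showing $L_W^\mu:H^1(\tilde\M)\to H^1(U)$ is bounded with operator norm $\lesssim \Vert W\Vert_{C^1}$, and one relating $\tilde I_W$ on $\tilde\M$ to $I_W$ on $\M_O$ via the flow map $\beta$. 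Your substitution ``$\chi e^{F/x}u=f$ on $B$'' also goes in the wrong direction; the paper applies the scattering estimate to $e^{-1/x}f$ (which is bounded on the compact ball $B$) rather than trying to divide by $\chi$.

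Finally, for part \ref{linearlocalstable2}, the semi-norm bound on $A_W^\chi$ does not come from the forward estimates of Section \ref{s_forward}; those concern $I_W$ and $C_\Phi$, not scattering semi-norms of the localised normal operator. The paper instead exploits that $(c,W)\mapsto \psi_c^*A_W^{\chi_c}(c)$ is continuous into $\Psi_\sc^{-1,0}(X_0)$ and \emph{homogeneous of degree two} in $W$, from which a short functional-analytic argument extracts a bound of the form $\Vert A_W^\chi\Vert \lesssim \Vert W\Vert_{C^k}^2$ uniformly in small $c$.
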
	
   
   Let us remark on a few aspects of the theorem: The bound $\vert \d \tilde x\vert_g \le 1 $ can always be achieved by scaling $\tilde x$ and is included as it ensures that the set $O=\{\tilde x>-c\}$ $(c>0)$ contains the geodesic ball
   $B(p,c/2)$.\\
 In order to make the perturbation result from part \ref{linearlocalstable3} precise, assume that $M=\{\vartheta \le 0\}$ for a smooth function $\vartheta:M\rightarrow \R$ that is strictly convex in a neighbourhood $U$ of $p$. Then for $t>0$ small, also the boundary of the mani\-folds $M_t=\{\vartheta\le -t\}\subset M$ is strictly convex in $U$ and the Theorem applies to the weighted $X$-ray transform of $M_t$ (defined via integrals over the shorter geodesics with endpoints on $\partial M_t$). Then \ref{linearlocalstable3} means that estimate \eqref{linearlocalstable2eqn}  can be made uniform  for $t>0$ sufficiently small and  $q \in \partial M_t \cap U$ close to $p$.

\begin{remark} The compactness condition is non-essential and has only been included to simplify bound \eqref{linearlocalstable2eqn}. For non-compact $M$ and $h(W)$ replaced by $h(W)\vee h^*$ for a fixed upper bound $h^*>0$, the relevant sets from part \ref{linearlocalstable1} lie within a compact subset $L\subset M$ and \eqref{linearlocalstable2eqn} remains true after replacing the right hand side by $\omega(\Vert W \Vert_{C_L^k(SM)} \vee \Vert W^{-1} \Vert_{L^\infty(SM\vert_L)})$. Here the semi-norm $\Vert \cdot \Vert_{C^k_L(SM)}$ is defined in local coordinates by taking the supremum over  $L$ of derivatives up to order $k$.
  In particular, if $M$ can be embedded into a compact manifold $M'$, then $\Vert \cdot \Vert_{C_L^k(SM)} \lesssim \Vert \cdot\Vert_{C^k(SM')}$.
  \end{remark}
	
  \begin{proof}[Proof of Theorem \ref{linearlocalstable}] The proof essentially consists of a careful inspection of the Uhlmann-Vasy method, which is comprised of the following steps:
  \begin{enumerate}
  \item \label{uvstep1} In a neighbourhood of $p$, the normal operator $I_W^*I_W$ is modified to a `localised normal operator' $A^\chi_W$, defined over an auxiliary manifold $X$ with  $p\in O = M\cap X$.
  \item \label{uvstep2} The operator $A_W^\chi$ is shown to lie in the class $\Psi^{-1,0}_\sc(X)$ (Definition \ref{defsc}), elliptic near the `artificial boundary' $\partial X$. By Theorem \ref{thmini}, it is thus locally invertible in a neighbourhood of $\partial X$.
  \item \label{uvstep3} A posteriori, the auxiliary manifold $X$ is  chosen such that the domain of injectivity includes $O=M\cap X$. Stability estimates for $A^\chi_W$ can then be translated into ones for $I_W$.
  \end{enumerate}
   
Using Theorem \ref{thmini}\ref{thminia}, the constants in the resulting stability estimate are then uniform under some control on $\sigma_\sc(A^\chi_W)$ and $\Vert A^\chi_W \Vert$  (for a semi-norm $\Vert \cdot \Vert$ on $\Psi^{-1,0}_\sc(X)$). As $A^\chi_W$ depends homogeneously and  (in the $C^\infty$-topology) continuously on $W$, this easily translates to uniformity in terms of $W$ and eventually yields \ref{linearlocalstable2}.\\ 
   We will now discuss the three steps above in more detail. However, as the method has been used in several previous articles (e.g. \cite{UhVa16}\cite{SUV16}\cite{SUV17}\cite{PSUZ19}), the exposition below will be brief and focus on the application of our quantitative result from the previous section.\newline
{\bf Step (1).}
	We embed $M$ into a closed manifold $(N,g)$ of the same dimension and extend the weight smoothly to $W:SN\rightarrow \C^{m\times m}$.
	As $p\in\partial M$ is a point of strict convexity, it admits a neighbourhood $U\subset N$  and coordinates $(\tilde x,y):U\xrightarrow{\sim} \R\times \R^{d-1}$ for which
	\begin{equation}
	\{\tilde x \ge 0\}\cap M = \{p\} \quad \text{and} \quad \tilde x \text{ is strictly convex near  } p
	\end{equation}
	(cf. \cite[Section 3]{PSUZ19} for a construction).
	The following constructions are carried out with respect  to a small parameter $0<c<c_0$ (and $c_0$ chosen later), noting dependencies when necessary. Change coordinates to $(x,y)=(\tilde x + c,y)$, such that $\{x \ge 0 \}$ is the intersection of $U$ with a compact manifold $X\subset N$ with {\it strictly concave} boundary near $p$. Consider the parametrisation
	\begin{equation}\label{projection}
	\R_x\times \R^{d-1}_y \times \R_\lambda \times S^{d-2}_\omega \rightarrow SU ,\quad (x,y,\lambda,\omega) \mapsto \frac{\lambda \partial_x+ \omega\partial_y}{\vert \lambda \partial_x+ \omega\partial_y \vert_g},
	\end{equation}
	with vectors parallel to $\partial_x$ missing in the image (they are negligible, as eventually we are interested in geodesics that are `nearly tangent' to $\partial X$) . Pulling back the geodesic flow via \eqref{projection} yields integral curves

	\begin{equation}\label{integralcurve}
	\gamma_{x,y,\lambda,\omega}(t) = \left( \gamma_{x,y,\lambda,\omega}^{(1)}(t), \gamma_{x,y,\lambda,\omega}^{(2)}(t) \right) \in \R\times \R^{d-1}
	\end{equation}
	and one may consider the following `localised normal operators', acting on smooth functions $f:[0,\infty)_x\times \R^{d-1}_y \rightarrow \C^m$ with suitable decay at $x=0$:
	\begin{equation}\label{defnormal}
	\begin{split}
	\hspace{-.475 em} A^\chi_Wf(x,y) = x^{-2} e^{-1/x} \iiint & W^*(x,y,\lambda,\omega) (Wf) \left( \gamma_{x,y,\lambda,\omega}(t), \dot \gamma_{x,y,\lambda,\omega}(t) \right)\\
	& e^{1/\gamma^{(1)}_{x,y,\lambda,\omega}(t)} \chi(x,y,\omega,\lambda/x) ~ \d t \d \lambda \d \omega.
	\end{split}
	\end{equation}
	This corresponds to equation (4.1) in \cite{PSUZ19}. Let us discuss the  ingredients of \eqref{defnormal} in detail: 	Without loss of generality we may assume that the interior of the box $B= [0,2c]_x \times [-1,1]^{d-1}_y$  contains the portion of $M$ within $U\cap X$. Further, the `localising function'  $\chi$ is assumed to satisfy\footnote{ Note that in \cite{PSUZ19}, the authors write $\chi=\chi(\lambda/x)$, suppressing the dependency on $(x,y,\omega)$.
	}
	\begin{equation}\label{sptcon}
	\supp \chi \subset B\times S^{d-2} \times [-C_0,C_0]
\end{equation}		
	for some $C_0>0$ and will later be chosen such that $A^\chi_W$ is elliptic in an appropriate sense. The domain of integration in \eqref{defnormal} is $[-\delta_0,\delta_0]_t \times \R_\lambda \times S^{d-2}_\omega$, where $\delta_0>0$ is chosen small enough to satisfy the following criteria: First we ask that the curves \eqref{integralcurve}, starting from $B$,  do not leave the coordinate chart for $\vert t \vert \le \delta_0$. Second, and after decreasing $c_0$ if necessary, we ask that 
	\begin{equation} \label{sptcon2}
	\gamma_{x,y,\lambda,\omega}^{(1)}(t) \ge \frac{C_1}{2} \left( t + \frac{\lambda}C_1\right)^2 + \left(x - \frac{\lambda^2}{2C_1} \right),\quad  (x,y)\in B, \vert t\vert, \vert \lambda \vert < \delta_0,
	\end{equation}
	for some $C_1>0$. (See equation (3.2) in \cite{UhVa16}, where this inequality is derived for $C_1$ essentially being a lower bound of the Hessian of $\tilde x$ near $p$).\newline
	\textbf{Step (2).}
	Note that $A_W^\chi$ may be viewed as operator $C_c^\infty(X^\interior,\C^m) \rightarrow C^\infty(X^\interior,\C^m)$ with Schwartz-kernel compactly contained in $(U\cap X)^2$. The crux is now that $A_W^\chi$ fits into Melrose' scattering calculus in the sense that $A_W^\chi \in \Psi^{-1,0}_\sc(X)$ and, upon a judicious choice of localiser $\chi$, is elliptic near $\partial X\cap M$. In particular, Theorem \ref{thmini} (local inversion of scattering operators) can be used.\\ 
	In order to give a precise statement, we recall that the constructions above depend on a parameter $c>0$ and there is a whole family of operators $A^\chi_W(c)$, defined over sub-manifolds $X_c\subset N$ (with $X_c\cap U=\{\tilde x + c \ge 0\}$). We may assume that there is a flow $\psi_c$ on $N$, defined for small $c>0$, for which $X_c=\psi_c(X_0)$. 
	
	\begin{theorem}\label{voodoo} Upon choosing $c_0,\lambda_0>0$ sufficiently small, we have:
	
	\begin{enumerate}[label=(\roman*)]
		\item \label{voodoo1} For all smooth localisers $\chi$  with \eqref{sptcon}, the  operator  $A^{\chi}_W(c)\in \Psi^{-1,0}_\sc(X_c)$. Further,  allowing $\chi$ to depend continuously on $c$, the map 
		\begin{equation} \label{mapcont}
		\begin{split}
	[0,c_0) \times C^\infty(SN,\C^{m\times m}) &\rightarrow  \Psi^{-1,0}_\sc(X_0), \quad (c,W) \mapsto 
	\psi_c^* A^{\chi_c}_W(c)
	  \hspace{-2em}	
	\end{split}
	\end{equation}
is continuous with respect to the natural Fr{\'e}chet-topologies. Moreover, for any continuous semi-norm $\Vert \cdot \Vert $ of $\Psi^{-1,0}_\sc(X_0)$ there is an integer $k\ge 0$ such that
\begin{equation}
\Vert \psi_c^* A^{\chi_c}_W(c) \Vert \lesssim \Vert W \Vert_{C^k(SN)}^2\quad \text{ for all } 0\le c < c_0. \label{upgrade}
\end{equation}
	\item \label{voodoo2} There exists a localiser $\chi$, smooth, satisfying \eqref{sptcon} and depending continuously on $c$, such that for all  $(c,W)$  in \eqref{mapcont} we have
	\begin{equation} \label{symbollowerbound}
	\vert \sigma_\sc(A^{\chi_c}_W(c)(z,\zeta)) \vert \ge \lambda_0 \Vert W^{-1} \Vert_{L^\infty(SN)}^{-2},\quad 
	z\in \partial X_c\cap M, \zeta \in {}^\sc T^*_zX_c. \hspace{-2em}
	\end{equation} 
	
		\end{enumerate}
	\end{theorem}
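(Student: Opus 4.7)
The qualitative content of the theorem---that $A_W^\chi$ lies in $\Psi_\sc^{-1,0}(X_c)$ and that $\chi$ can be chosen to make it elliptic at the artificial boundary $\partial X_c \cap M$---is contained in \cite{UhVa16,PSUZ19}. My plan is to revisit that construction, extract explicit oscillatory-integral expressions for the full symbol of $A_W^\chi$ and for its scattering principal symbol, and then read off the quantitative bounds claimed in \eqref{upgrade} and \eqref{symbollowerbound}. I would treat \ref{voodoo1} and \ref{voodoo2} in turn and handle uniformity in $c$ at the end by passing to the fixed reference manifold $X_0$ via $\psi_c$.

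\textbf{Part \ref{voodoo1}: membership and semi-norm bound.} Starting from \eqref{defnormal}, I would substitute new integration variables $(t,\lambda) \mapsto (\xi,\eta)$ designed so that, after the quadratic expansion provided by \eqref{sptcon2}, the product $e^{1/\gamma^{(1)} - 1/x}$ combines with the implicit $(t,\lambda)$-dependence of $\gamma$ to give exactly the scattering phase $e^{i\xi(x-x')/x^2 + i\eta\cdot(y-y')/x}$ of Definition \ref{defsc}. The estimate \eqref{sptcon2}, together with the support constraint $|\lambda/x|\le C_0$ built into $\chi$, guarantees that this substitution is a global diffeomorphism on the region that contributes to the integral and absorbs the prefactor $x^{-2}e^{-1/x}$ into the Jacobian. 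After the change of variables $A_W^\chi$ assumes the form \eqref{defsc1} with a full symbol
\[
a(x,y,\xi,\eta) \;=\; \int W^*(x,y,\lambda(\xi,\eta),\omega)\,W\bigl(\gamma^{(1)},\dot\gamma\bigr)\,\chi\,J\,\d\omega,
\]
whose smoothness up to $x=0$ and symbolic estimates \eqref{defsc2} follow from the regularity of $\gamma$ in its parameters and the compactness of the $\omega$- and $\lambda/x$-support of $\chi$. Crucially, the map $W \mapsto a$ is \emph{bilinear} (one factor $W^*$, one factor $W$), continuous between the relevant Fr{\'e}chet topologies, and uniform in $c$; by continuity of the quantisation, this both yields continuity of the map \eqref{mapcont} and produces the bilinear semi-norm bound $\|\psi_c^*A_W^{\chi_c}(c)\| \lesssim \|W\|_{C^k(SN)}^2$ in \eqref{upgrade}.

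\textbf{Part \ref{voodoo2}: ellipticity with $\|W^{-1}\|^{-2}$ lower bound.} Setting $x=0$ in the above formula for $a$ identifies the scattering principal symbol at a boundary point $z\in \partial X_c \cap M$ with a matrix-valued integral of the schematic form
\[
\sigma_\sc(A_W^{\chi_c})(z,\xi,\eta)
\;=\;\int_{S^{d-2}} W^*\bigl(z,v(\omega)\bigr)\,W\bigl(z,v(\omega)\bigr)\,\widetilde{\chi}_{(\xi,\eta)}(z,\omega)\,\d\omega,
\]
where $v(\omega)$ is the unit direction corresponding to $\omega$ via \eqref{projection} evaluated at the critical set, and $\widetilde{\chi}_{(\xi,\eta)}$ is the image of $\chi$ after the phase-change-of-variables used in part \ref{voodoo1}. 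Since $W^*W \ge \|W^{-1}\|_{L^\infty(SN)}^{-2}\cdot \id$ in the order of Hermitian matrices, it suffices to arrange $\widetilde{\chi}_{(\xi,\eta)}(z,\omega)$ to be real, non-negative, and bounded below in integrated form by a constant $\lambda_0=\lambda_0(M,g,p)>0$, uniformly in $(\xi,\eta)\in {}^\sc T_z^*X_c$. This is precisely the Gaussian-profile construction of $\chi$ carried out in \cite[\S3]{UhVa16} and reused in \cite[\S4]{PSUZ19}; the point I would emphasise is that this choice of $\widetilde{\chi}$ decouples entirely from $W$, so that $\lambda_0$ depends only on the background geometry and the resulting lower bound scales as $\lambda_0 \|W^{-1}\|_{L^\infty(SN)}^{-2}$, as claimed in \eqref{symbollowerbound}.

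\textbf{Main obstacle.} The principal technical difficulty, and the reason the uniform version is not immediate from \cite{PSUZ19}, is ensuring that a single choice of $\chi$ (or a continuously varying family $\chi_c$) produces the estimate \eqref{symbollowerbound} simultaneously for all small $c$ and all invertible $W$. To handle this I would work throughout on the fixed reference manifold $X_0$ by pulling back via $\psi_c$, so that the phase, amplitude and localiser all become jointly smooth in $c \in [0,c_0)$ with values in fixed symbol spaces; compactness of $\partial X_0 \cap M$ near $p$ and continuity of the geodesic flow under the perturbation then let $c_0$ be chosen small enough that the pointwise lower bound persists uniformly. The secondary obstacle is verifying that the change of variables used in part \ref{voodoo1} is a global diffeomorphism on the support of $\chi$ \emph{uniformly} in $(c,W)$, which is ultimately a consequence of \eqref{sptcon2} and the compact support of $\chi$ in $\lambda/x$, and requires only shrinking $c_0$ and $\delta_0$ by amounts that depend on the background geometry alone.
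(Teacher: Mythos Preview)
Your approach is essentially the same as the paper's, and your identification of the two key mechanisms---bilinearity/homogeneity in $W$ for \ref{voodoo1}, and the decoupling $\sigma_\sc = \int W^*W\,\widetilde\chi\,\d\omega$ for \ref{voodoo2}---is correct. The paper derives \eqref{upgrade} by an abstract argument (continuity of \eqref{mapcont} at $(0,0)$ plus degree-$2$ homogeneity in $W$, then a neighbourhood-basis argument to extract a uniform $c_0$), which is the same content as your ``bilinear $+$ jointly continuous'' observation.

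There is one genuine gap in part \ref{voodoo2}. You assert that the Gaussian choice of $\chi$ makes $\widetilde\chi_{(\xi,\eta)}$ real and non-negative, so that $W^*W\ge \Vert W^{-1}\Vert^{-2}\id$ immediately yields \eqref{symbollowerbound}. This is true for the \emph{exact} Gaussian $\chi(x,y,\hat\lambda,\omega)=e^{-\hat\lambda^2/(2\alpha)}$, but that localiser has full support in $\hat\lambda$ and therefore violates \eqref{sptcon}. Once you truncate to obtain compact $\hat\lambda$-support, the resulting $\widetilde\chi_{(\xi,\eta)}$ is obtained by integrating an oscillatory kernel against the truncated Gaussian and is no longer pointwise non-negative; your decoupling $\int W^*W\,\widetilde\chi\ge \Vert W^{-1}\Vert^{-2}\int\widetilde\chi$ then fails, because the negative part of $\widetilde\chi$ picks up a factor $\Vert W\Vert^2$ rather than $\Vert W^{-1}\Vert^{-2}$.

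The paper closes this gap differently: it first notes that the truncation error in the symbol is uniform in $W$ once one imposes the normalisation $\Vert W\Vert_{L^\infty}\le 1$, so that a sufficiently good truncation yields \eqref{symbollowerbound} on that set; the general case then follows by the degree-$2$ homogeneity of both sides of \eqref{symbollowerbound} in $W$. You should incorporate this scaling step, since without it the choice of truncation would depend on the condition number $\Vert W\Vert\,\Vert W^{-1}\Vert$, destroying the claimed uniformity.
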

	
	\begin{proof}[Sketch of Proof] The proof is essentially carried out in Section 4 of \cite{PSUZ19}. We sketch the main aspects, highlighting dependencies on the weight.\\
	Either by first computing the Schwartz-kernel (\cite[Lem.\,4.1.]{PSUZ19}) or directly (akin to \cite{SUV17}), one verifies that $A_W^\chi$ has an oscillatory integral expression of the form \eqref{defsc1} and the pseudodifferential-property as well as the continuous dependency can be checked directly. We note here that continuous dependence on $c$ is already implicitly used in \cite{MNP19} and continuous dependence on $W$ is akin to continuous dependence on the metric as stated in e.g. \cite[Prop.\,4.2]{SUV17}.\\
	Further, \eqref{upgrade} can be derived from \eqref{mapcont} and the homogeneity of $A_W^{\chi_c}(c)$ in $W$. This can be seen easiest in a general functional analytic setting, where we are given two Fr{\'e}chet-spaces $E$ and $F$ and a continuous map
	\begin{equation}
	\varphi: [0,\infty)\times E\rightarrow F,\quad \text{with } \varphi(c,t \cdot )=t^2 \varphi(c,\cdot) \quad (t,c\ge 0).
	\end{equation}
	Then the collection of sets $\{(t,w):0\le t < \epsilon: \Vert w \Vert' <1 \}$, where $\epsilon>0$  and $\Vert \cdot \Vert'$ runs through continuous semi-norms of $E$, constitute a basis for the neighbourhoods of $(0,0)\in [0,\infty)\times E$. Given a continuous semi-norm $\Vert \cdot \Vert$ on $F$, the set $\{(c,w):\Vert \varphi(c,w) \Vert < 1 \}$  is an open neighbourhood of $(0,0)$ and thus we can find $c_0>0$ and $\Vert \cdot \Vert'$ with 
	\begin{equation}
	\{(c,w): 0\le c < c_0,  \Vert w \Vert' < 1\} \subset \{(c,w):\Vert \varphi(c,w) \Vert < 1 \}.
	\end{equation}
	Now take $0\le c <c_0$ and $w\in E$, then $(c,w/(2\Vert w \Vert'))$ lies in the left set and thus 
	\begin{equation}
	\Vert \varphi(c,w) \Vert= 4 (\Vert w \Vert')^2 \cdot\Vert \varphi(c,w/(2\Vert w \Vert')) \Vert\le 4 (\Vert w \Vert')^2,
	\end{equation}
	as desired.\\
	To prove \ref{voodoo2}, we first fix $c$. Then  the symbol in said oscillatory integral expression, restricted to $x=0$, takes the form
	\begin{equation} \label{fullsymbol}
	\begin{split}
	a(0,y,\xi,\eta) = \iiint &e^{i\xi (\hat \lambda \hat t +\alpha(0,y,0,\omega)\hat t^2 + i\eta \cdot \omega \hat t } \cdot e^{-{\hat \lambda \hat t  - \alpha(0,y,0, \omega) \hat t^2}} \\
	&  \times W^*W(0,y,0,\omega) \chi(0,y,\hat \lambda, \omega)  \d \hat \lambda \d \hat t \d \omega,
	\end{split}
	\end{equation}
	where $\alpha(x,y,\lambda,\omega)=(\d/\d t)^2 \gamma_{x,y,\lambda,\omega}^{(1)}(t) > 0$ (say, for $(x,y)\in B$) and the integral domain is $\R_{\hat \lambda}\times\R_{\hat t}\times S^{d-2}_\omega$. For the particular  choice $\chi(x,y,\hat \lambda, \omega) = \exp(-\hat \lambda^2/(2\alpha(x,y,\lambda,\omega)))$ (multiplied with a cut-off in $(x,y)$ to ensure that it is supported in $B$), the integral in the last display can further be evaluated to obtain a non-zero multiple of
	\begin{equation}
	\langle \xi \rangle^{-1} \int_{S^{d-2}} (W^*W)(0,y,0,\omega) e^{-\vert \eta \cdot \omega / \langle \xi \rangle \vert^2/ 2 \alpha(0,y,0,\omega)} \d \omega,
	\end{equation}
	which corresponds  to the second display below equation (4.10) in \cite{PSUZ19}. Following the reasoning of \cite[proof of Prop.\,4.3]{PSUZ19} below said expression yields
	\begin{equation}
	\langle (\xi,\eta) \rangle a(0,y,\xi,\eta) \ge 2\lambda_0\cdot \Vert W^{-1} \Vert_{L^\infty(SN)}^{-2}
	\end{equation}
	for a constant $\lambda_0$ only depending on the local geometry near $p$. Here it was used that $W^*W(0,y,0,\omega)$ is bounded from below by the square of the smallest singular value of $W$, which is in turn lower-bounded by $\Vert W^{-1} \Vert_{L^\infty(SN)}^{-2}$.\\
	The localiser $\chi$ above has full support in $\hat \lambda$ and thus fails to satisfy  \eqref{sptcon}. In the proof of \cite[Prop.4.3]{PSUZ19} $\chi$ is thus approximated by localisers with compact $\hat \lambda$-support, thus obeying \eqref{sptcon} for some $C_0>0$. From \eqref{fullsymbol} it follows that the approximation is uniform in $W$, at least under an a priori bound $\Vert W \Vert_{L^\infty(SN)} \le 1$. This proves part \ref{voodoo2} for all $W$ with $\Vert W \Vert_{L^\infty(SN)} \le 1$ and the general case follows from a scaling argument, noting that both sides of \eqref{symbollowerbound} are homogeneous in $W$  of degree $2$.\\
	Finally we comment on the $c$-dependency: Note that $\alpha(x,y,\lambda,\omega)$ (and thus $\chi$) implicitly depends on $c$ through the choice of $x=\tilde x  + c$. However, the dependence is clearly  continuous and $\alpha$ can be bounded in terms of the geometry near $p$. In particular, the bound \eqref{symbollowerbound} is uniform in $c$.
	\end{proof}

	By Theorem \ref{voodoo}, for an invertible weight $W$, the operator $A^{\chi}_W(c)$ is  locally elliptic for suitably chosen $\chi$ and sufficiently small $c>0$. In particular  Theorem \ref{thmini} can be applied and, for constants $C,h>0$ (depending on $W$ and $c$), we obtain
\begin{equation}\label{iniapp}
\Vert f \Vert_{L^2(X)} \le C \Vert A^\chi_W(c) f\Vert_{H_\sc^{1,-(d+1)/2}(X_c)},\quad \text{ if } \supp f \subset M\cap B(\partial X_c, h).
\end{equation}
Due to \eqref{upgrade} and \eqref{symbollowerbound}, the uniformity statement of Theorem \ref{thmini}\ref{thminib} gives
\begin{equation}\label{iniapp2}
C(W,c) \vee h(W,c)^{-1} \le \omega(\Vert W \Vert_{C^k(SN)} \vee \Vert W^{-1} \Vert_{L^\infty(SN)} ),
\end{equation}
valid for sufficiently small $c>0$ and all smooth weights $W:SN\rightarrow \Gl(m,\C)$. Here $\omega:[0,\infty)\rightarrow [0,\infty)$ is a non-decreasing function and $k\ge 1$. \newline
\textbf{Step (3).} From now on we argue with a fixed weight $W:SN\rightarrow \Gl(m,\C)$ and keep track of how the arising constants depend on 
\begin{equation}
A = \Vert W \Vert_{C^k(SN)} \vee \Vert W^{-1} \Vert_{L^\infty(SN)}>0.
\end{equation}
Fix $0<c<h$, then  $O_c=M\cap X_c$ lies in $M\cap B(\partial X_c,h)$ and consequently (dropping the $c$-subscripts from now on)
\begin{equation} \label{stabby}
\Vert  f \Vert_{L^2(O)} \lesssim_A \Vert A^\chi_W f\Vert_{H^{1,-(d+1)/2}(X)},\quad f\in L^2(M),
\end{equation}
where it is understood that $f$ is extended by zero  outside of $M$. In order to translate this into a stability estimate for $I_W$, we factor the operator $A_W^\chi$ as 
\begin{equation}\label{factor}
A_W^\chi f = x^{-2} e^{-1/x} L_W^\mu \tilde I_W (e^{1/x}f),\quad f\in L^2(M)
\end{equation}
where the operators $L^\mu_W$ and $\tilde I_W$ are defined as follows: We may assume that $\bar U$ (viewed as a manifold with boundary) is simple and denote with $\tau_U:S\bar U\rightarrow [0,\infty)$ its exit time. Note that we can write  $\chi(x,y,\lambda/x,\omega) \d \lambda \d \omega = \mu(z,v) \d v$ on $SU$ for a smooth function $\mu:SU\rightarrow \R$ with compact support. Then 
\begin{equation}\label{ldef}
L^\mu_W: C(\partial_+S\bar U)\rightarrow C(U), \quad L^\mu_W u(z) =\int_{S_zN} W^*(z,v) u^\sharp(z,v) {\mu}(z,v) \d v,
\end{equation}
where $u^\sharp$ extends $u$  constant along the geodesic flow. Further $\tilde I_W$ is the weighted $X$-ray transform, defined with respect to the manifold $\bar U$ and \eqref{factor} is evident, as $f\vert _U$ is supported in $M\cap U$ and thus no additional mass is collected by integrating along complete geodesics in $\bar U$. \\

Define $\tilde \M \subset \partial_+S\bar U$ to consist of initial conditions $(z,v)$ for which $z\in X$, the geodesic $\gamma_{z,v}(t)$ enters $B$ for some $0\le t \le \tau_U(z,v)$, but does not hit $\partial X\cap M$.   After decreasing $h$ if necessary, we can assume that 
\begin{equation}\label{sptcon3}
\{\varphi_t(z,v): 0\le t\le \tau_U(z,v)\}\cap \supp \mu = \emptyset\quad \text{ for } (z,v) \in \partial_+S\bar U \backslash \tilde \M. 
\end{equation}
Indeed, assume that $h<C_1/C_0^2\wedge \delta_0/(2C_0)$, where $\delta_0,C_0,C_1$ are the constants from \eqref{sptcon} and \eqref{sptcon2}. Then if the integral curve starting at $(z,v)\in \partial_+S\bar U$ enters the support of $\mu$ at, say $(x,y,\lambda,\omega$, we must have $0<x<2c<2h$ and $\vert \lambda/x \vert < C_0$, which implies that $\vert \lambda \vert <\delta_0$ and $x-\lambda^2/(2C_1)>x(1-x C_0^2/(2C_1))>x(1-hC_0^2/C_1)>0$. In particular the right hand side of \eqref{sptcon2} is non-negative and the curve cannot intersect $M\cap \partial X$.\newline
To proceed, take $K\subset O$ compact (such as the geodesic ball $B(p,c/2)$, when $\tilde x$ is scaled to satisfy $\vert \nabla \tilde x\vert \le 1$). We then have for all $f\in L^2(M)$
\begin{equation}\label{stabby2}
\Vert f \Vert_{L^2(K)} \lesssim_K \Vert e^{-1/x} f \Vert_{L^2(O)}  \lesssim_A \Vert x^{-\frac{d-1}{2} }e^{-1/x} L^\mu_W \tilde I_W f \Vert_{H^1(X)},
\end{equation}
where the first estimate follows from the fact that $e^{1/x}$ and all its derivatives are bounded on $K$ and the second estimate follows from 
equation \eqref{stabby} and inclusion $H_\sc^{1,\ell}(X) \subset x^{-\ell} H^{1,0}_\sc \subset x^{-\ell} H^1(X) $ for $\ell = -(d+1)/2$.\\
Note that the function on the right hand side in \eqref{stabby2}   is compactly supported in $U$ (due to the support condition on $\mu$) and that $x^{-(d-1)/2}e^{-1/x}$ and all of its derivatives extend by zero to a bounded function on $U$. Thus
\begin{equation} \label{stabby3}
\Vert f \Vert_{L^2(K)}\lesssim_{K,A} \Vert L^\mu_W \tilde I_W f\Vert_{H^1(U)},\quad \text{ for all } f \in L^2(M)
\end{equation}
and it remains to bound the operator norm of $L^\mu_W$ and relate $\tilde I_W$ to the transform $I_W$ we are actually interested in.

\begin{lemma}\label{lbounded} For all $k\ge 0$ the operator  $L^\mu_W : H^k(\tilde \M)\rightarrow H^k(U)$ is bounded with operator norm $\lesssim \Vert W \Vert_{C^k(S\bar U)}$.
\end{lemma}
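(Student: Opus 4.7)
The plan is to factor $L^\mu_W$ as a composition of three elementary operations and track $H^k$-boundedness through each. Writing $\pi: SU \to U$ for the bundle projection and $\pi_+: SU \to \partial_+ S\bar U$ for the footpoint map $(z,v) \mapsto \varphi_{-\tau_-(z,v)}(z,v)$, where $\tau_-$ denotes the backward exit time in $\bar U$, the definition of $L^\mu_W$ rewrites as
\[
L^\mu_W u = \pi_*\bigl(W^*\mu \cdot \pi_+^* u\bigr),
\]
with $\pi_*$ denoting fibre-integration.

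The first step is to control the pull-back $\pi_+^*$. Since $\bar U$ is simple and $\supp \mu \subset SU$ sits above the interior of $\bar U$, the glancing set of $S\bar U$ is disjoint from $\supp \mu$, so $\pi_+$ is a smooth submersion in a neighbourhood of $\supp \mu$. By \eqref{sptcon3}, $\pi_+(\supp \mu) \Subset \tilde \M$, so fixing a cut-off $\psi \in C_c^\infty(\tilde \M)$ equal to $1$ near $\pi_+(\supp \mu)$ one has $L^\mu_W u = L^\mu_W(\psi u)$, and multiplication by $\psi$ is bounded $H^k(\tilde \M) \to H^k_c(\tilde \M)$ with a constant depending only on $\psi$. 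Extending $\psi u$ by zero to $\partial_+ S\bar U$ and pulling back along the submersion $\pi_+$ gives an $H^k_{\loc}$-function on $SU$ with norm $\lesssim \Vert u \Vert_{H^k(\tilde \M)}$; the pull-back bound along a smooth submersion is classical and reduces by local charts to the projection $\R^{n+1}\to \R^n$, $(x,y)\mapsto x$, where Leibniz and Fubini make the bound explicit.

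The second step is multiplication by $W^*\mu \in C_c^k(SU)$, whose $C^k$-norm is $\lesssim_\mu \Vert W \Vert_{C^k(S\bar U)}$; this is bounded on $H^k$ and produces a compactly supported function in $SU$. The third step is fibre-integration $\pi_*: H^k_c(SU) \to H^k(U)$, bounded by a classical argument: coordinate vector fields on $U$ admit horizontal lifts to $SU$ that commute with $\pi_*$ up to zeroth-order terms controlled by the Sasaki metric, reducing the estimate to $L^2$-boundedness, itself immediate from Cauchy--Schwarz given compact fibre support.

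Composing the three bounded steps yields the claim with the desired linear dependence on $\Vert W \Vert_{C^k(S\bar U)}$. The main obstacle is conceptual rather than computational: one must check that the various boundary/glancing issues do not interfere with applying standard Sobolev operations --- in particular the disjointness of $\pi_+(\supp \mu)$ from $\partial \tilde \M$ and of $\supp \mu$ from $\partial_0 S\bar U$ --- which however follows cleanly from the geometric setup together with the support condition \eqref{sptcon3}.
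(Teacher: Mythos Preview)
Your proposal is correct and follows essentially the same route as the paper: both factor $L^\mu_W$ as (localise) $\to$ (pull back along the geodesic footpoint map) $\to$ (multiply by $W^*\mu$) $\to$ (fibre-integrate along $\pi$), and your map $\pi_+^*$ is exactly the paper's $\sharp$-extension, identified via the diffeomorphism $(z,v,t)\mapsto\varphi_t(z,v)$ between $\partial_+S\bar U^{\interior}\times\R$ and $SU$. The only cosmetic difference is in the first step: the paper invokes a Seeley extension $E:H^k(\tilde\M)\to H^k_c(\partial_+S\bar U^{\interior})$ and then notes that the choice of $E$ is irrelevant thanks to \eqref{sptcon3}, whereas you use a cut-off $\psi$ with $\psi\equiv 1$ on $\pi_+(\supp\mu)$ to the same effect; your version is slightly more direct.
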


\begin{proof}[Proof of Lemma  \ref{lbounded}.]
We prove the lemma in a slightly more general setting, when $\tilde \M\subset \partial_+S\bar U$ is any open subset with closure not intersecting $S\partial \bar U$ and $\mu:SU\rightarrow \R$ is a smooth function with compact support satisfying \eqref{sptcon3}.\\
The lemma then follows from factorising $L^\mu_W$ as
\begin{equation}
H^k(\tilde \M) \xrightarrow{E} H^k_c(\partial_+S\bar U ^\interior) \xrightarrow{\sharp} H^k(SU) \xrightarrow{\times \mu W^*}  H^k(SU) \xrightarrow{\pi_*} H^k(U)
\end{equation}
with the following factors: $E$ is an extension operator (cf. Lemma \ref{seeley}), which may be chosen to map to compactly supported functions, as $\tilde \M$ is assumed to have compact closure in $
\partial_+S\bar U^\interior = \partial_+S\bar U\backslash S\partial\bar U$. Due to condition \eqref{sptcon3}, the precise choice of $E$ is irrelevant.  Next, the map $\sharp$, defined below  \eqref{ldef}, is continuous, as under the isomorphism
\begin{equation}
\{(z,v,t)\in \partial_+S\bar U^\interior \times \R: 0 < t <\tau_U(z,v)\} \cong SU,\quad (z,v,t)\mapsto \varphi_t(z,v)
\end{equation} 
it corresponds to pull-back by the projection $\mathrm{pr}_1:\partial_+S\bar U^\interior \times \R \rightarrow \partial_+S\bar U^\interior$. Multiplication by $\mu W^*$ is clearly bounded with operator norm $\lesssim_\mu \Vert W \Vert_{C^k(S\bar U)}$. Finally, $\pi_*$ is the push-forward along the base-projection, which is well known (and easily checked in coordinates) to be $H^k$-continuous.
\end{proof}

\begin{lemma}\label{droptilde}
 $\Vert \tilde I_W f \Vert_{H^k(\tilde M)} \lesssim_k \Vert I_W f \Vert_{H^k(\M_O)}$ ($k\ge 0$) for all $f\in L^2(M)$.
\end{lemma}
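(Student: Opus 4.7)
The plan is to realise $\tilde I_W f\vert_{\tilde\M}$ as the pull-back of $I_W f\vert_{\M_O}$ along a smooth embedding $\Phi:\tilde\M \hookrightarrow \M_O$; the estimate then reduces to the standard continuity of $\Phi^*$ between Sobolev spaces.

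To construct $\Phi$, fix $(z,v)\in\tilde\M$ and set $J(z,v)=\{t\in[0,\tau_U(z,v)]:\gamma_{z,v}(t)\in M\}$. By definition of $\tilde \M$ the geodesic enters $B\subset M$, so $J(z,v)\neq\emptyset$. Strict convexity of $\tilde x$ near $p$ yields $(\tilde x\circ\gamma_{z,v})''(t) = \Hess(\tilde x)(\dot\gamma,\dot\gamma) > 0$ on the relevant interval, so $J(z,v)$ is a single closed interval $[s_-(z,v),s_+(z,v)]$. The other defining condition of $\tilde\M$ rules out $\gamma_{z,v}$ meeting $\partial X\cap M$, so throughout $J(z,v)$ the geodesic remains in $M\cap X = O$. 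Setting $\Phi(z,v):=\varphi_{s_-(z,v)}(z,v)\in\partial_+SM$ therefore lands in $\M_O$, with $\tau_M(\Phi(z,v)) = s_+(z,v)-s_-(z,v)$. Transversality of $\gamma_{z,v}$ to $\partial M$ at entry (a consequence of the Hessian lower bound) combined with the implicit function theorem applied to $\tilde x\circ\gamma = 0$ gives smoothness of $s_-$, hence of $\Phi$.

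Since $f$ vanishes outside $M$, the integral defining $\tilde I_W f(z,v)$ collects contributions only from $t\in J(z,v)$, and a straightforward change of variables $s=t-s_-(z,v)$ yields the pointwise identity
\begin{equation*}
\tilde I_W f(z,v) \;=\; \int_{s_-(z,v)}^{s_+(z,v)} Wf(\varphi_t(z,v))\,\d t \;=\; I_W f(\Phi(z,v))
\end{equation*}
on $\tilde \M$, i.e.\ $\tilde I_W f = \Phi^* I_W f$. The map $\Phi$ is a diffeomorphism onto its image (an inverse is obtained by extending geodesics backwards in $\bar U$ until they exit), and its derivatives of all orders are bounded uniformly on the closure of $\tilde\M$; standard $H^k$ estimates for composition with diffeomorphisms then give a continuous map $\Phi^*:H^k(\M_O)\to H^k(\tilde\M)$ for every $k\ge 0$, with implicit constants $\lesssim_k$ independent of $W$ since $\Phi$ depends only on the geometry.

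The one delicate point is smoothness of $s_-$ uniformly up to the boundary of $\tilde\M$; but this is automatic because $(z,v)\in\tilde\M$ starts strictly inward-pointing at $\partial\bar U$ and the lower bound on $\Hess(\tilde x)$ forces $\dot\gamma_{z,v}$ to be transverse to $\partial M$ at every crossing, giving a uniform non-degeneracy constant for the implicit function theorem on compact subsets of the closure of $\tilde\M$.
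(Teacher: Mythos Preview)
Your idea---relating $\tilde I_W f$ and $I_W f$ via pull-back along a diffeomorphism between pieces of $\partial_+S\bar U$ and $\partial_+SM$---is exactly the paper's, but your construction of the map has a genuine gap.

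The claim ``$B\subset M$'' is false: by definition $B=[0,2c]_x\times[-1,1]^{d-1}_y=\{-c\le\tilde x\le c\}\times[-1,1]^{d-1}$, whereas near $p$ one has $M\cap U\subset\{\tilde x\le 0\}$. So a geodesic issued from $(z,v)\in\tilde\M$ can enter $B$ while staying entirely in $\{\tilde x>0\}$, never meeting $M$; for such $(z,v)$ your entry time $s_-$ and hence $\Phi$ are undefined. This already breaks the asserted identity $\tilde I_W f=\Phi^*I_Wf$ on all of $\tilde\M$. Worse, on the boundary of the subset where $\Phi$ \emph{is} defined you find geodesics that graze $\partial M$ tangentially (minimum of the convex function $\tilde x\circ\gamma$ equal to $0$), and there $s_-$ acquires a square-root singularity; your Hessian argument gives transversality only at genuine crossings, not at these degenerate limit points, so the uniform implicit-function-theorem claim in your last paragraph does not go through.

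The paper sidesteps both issues by running the diffeomorphism the other way. It sets $\beta:\M_O\to\tilde\M$, $\beta(z,v)=\varphi_{-\tau_U(z,-v)}(z,v)$, flowing \emph{backwards} from $\partial_+SM$ to $\partial_+S\bar U$. Since $\bar U$ is simple, $\tau_U$ is smooth on $S\bar U^{\interior}$, and $\overline{\M_O}$ sits well inside $S\bar U^{\interior}$; hence $\beta$ extends smoothly across $\overline{\M_O}$ as an embedding, with no tangency problems. One then observes that $\supp\tilde I_W f\subset\overline{\beta(\M_O)}$ (because $f$ is supported in $M$), so $\Vert\tilde I_W f\Vert_{H^k(\tilde\M)}\lesssim\Vert\tilde I_W f\Vert_{H^k(\beta(\M_O))}\lesssim\Vert\beta^*\tilde I_W f\Vert_{H^k(\M_O)}=\Vert I_W f\Vert_{H^k(\M_O)}$. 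Your $\Phi$ is precisely $\beta^{-1}$ on $\beta(\M_O)$, so morally the arguments coincide; but the smoothness is transparent only in the paper's direction.
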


\begin{proof}[Proof of Lemma \ref{droptilde}]
Define $\beta : \M_O\rightarrow \tilde \M$ by $\beta(z,v) = \varphi_{-\tau_U(z,-v)}(z,v)$, then $\beta^* (\tilde I_W f) = I_W f$ on $\M_O$. Now pull-back $\beta^*:H^k(\beta(\M_O)) \rightarrow H^k(\M_O)$  ($k\ge 0$) is an isomorphism, as $\beta $ extends across the closure of $\M_O$ to a diffeomorphism onto its image. Thus $\Vert \tilde I_W \Vert_{H^k(\tilde \M)} \lesssim \Vert \tilde I_W f \Vert_{H^k(\beta(\M_O))} \lesssim \Vert I_W f \Vert_{H^k(\M_O)}$, where the first inequality follows from the fact that $f$ is supported in $M$ and thus $\supp \tilde I_W f \subset \overline{\beta(\M_O)}$.
\end{proof}

We can now finish the proof of Theorem \ref{linearlocalstable}. Using \eqref{stabby3} together with the previous two lemmas yields
$
\Vert f \Vert_{L^2(K)} \lesssim_{K,A} \Vert I_Wf \Vert_{H^1(\M_O)}$ for all $f\in L^2(M)
$
and, taking $K$ to be the geodesic ball $B=B(p,c/2)$ and making the $W$-dependency explicit again,
\begin{equation}\label{stabby4}
\Vert f \Vert_{L^2(B)} \le C'(W) \Vert I_W f \Vert_{H^1(\M_O)},\quad f\in L^2(M),
\end{equation}
where $C'(W) \le \omega'(\Vert W \Vert_{C^k(SN)} \vee \Vert W^{-1} \Vert_{L^\infty(SN)}^{-1})$ for $\omega':[0,\infty)\rightarrow [0,\infty)$ non-decreasing. One can further replace the norms on $SN$ by their counterparts on $SM$, as  $\Vert I_W f \Vert_{H^1(\M_O)}$ only depends on $W\vert_{SM}$. Thus \ref{linearlocalstable1} and \ref{linearlocalstable2} are proved.\newline
Finally, part \ref{linearlocalstable3} is clear from the above: When $p$ is slightly perturbed to some $p'\in \partial M$, the ball $B(p',c/2)$ remains within $O$ and $K$ may be chosen accordingly. Small perturbations of $M$ correspond to an affine change of variables in $\tilde x$ and are thus inconsequential. This concludes the proof.
	\end{proof}

   \section{Proof of the Stability Estimate}\label{s_glob}
   
   Let $(M,g)$ be compact, non-trapping and with strictly convex boundary $\partial M$. We complete the proof of Theorem \ref{mainthm}. 
   
   \subsection{Layer Stripping Argument}  We first derive a (global) stability estimate for the linearised problem.

	 \begin{theorem} \label{thm_glob} Let $d\ge 3$ and suppose that $K\subset O\subset M$, such that $K$ is compact and $O$ is open and satisfies the foliation condition. Then for $f\in C^\infty(M,\C^m)$ and $W\in C^\infty(SM,\Gl(m,\C))$, we have
   \begin{equation} \label{thmglob1}
   \Vert f \Vert_{L^2(K)}\le C(W)\cdot \Vert f \Vert_{C^2(M)}^{1-\mu(W)} \cdot \Vert I_Wf\Vert_{L^2(O)}^{\mu(W)},
   \end{equation}
   where $C>0$ and $\mu\in (0,1)$ obey an estimate
   \begin{equation}\label{thmglob2}
   C(W)\vee \mu(W)^{-1} \le \omega(\Vert W\Vert_{C^k(SM)}\vee \Vert W ^{-1} \Vert_{L^\infty(SM)})
   \end{equation}
   for some non-decreasing $\omega:[0,\infty)\rightarrow [0,\infty)$ and an integer $k\ge 2$.
   \end{theorem}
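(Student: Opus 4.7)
Proof plan for Theorem \ref{thm_glob}. The strategy is to combine the quantitative local stability from Theorem \ref{linearlocalstable} with a layer-stripping and interpolation procedure in the spirit of Stefanov--Uhlmann--Vasy \cite{SUV16}. Since $K$ is compact in $O$ and $\rho$ is continuous on $O$, we may fix $c_\ast$ with $\inf_O \rho < c_\ast \le \min_K \rho$, so that $K\subset O_{\ge c_\ast}$; it then suffices to control $\Vert f\Vert_{L^2(O_{\ge c_\ast})}$. The plan is to peel $O$ by thin strips $\{c \le \rho < c+h\}$ of uniform $\rho$-width $h > 0$, reducing at each step the problem on $O_{\ge c}$ to the analogous problem on $O_{\ge c+h/2}$, and iterating from $c = \sup_O \rho$ (where the set is empty) down to $c = c_\ast$.

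At each intermediate level $c$, consider the submanifold $M_c \defn M\cap\{\rho \ge c\}$; its boundary component $\Sigma_c = \{\rho = c\}$ is strictly convex (as a level set of the strictly convex function $\rho$), and the $\partial M$-portion inherits strict convexity from the ambient manifold. Applying Theorem \ref{linearlocalstable} to the $X$-ray transform on $M_c$ at each $p\in \Sigma_c$ yields a ball $B(p,h_p/2)$ of local $L^2$-by-$H^1$ stability with constants controlled by $\Vert W\Vert_{C^k(SM)} \vee \Vert W^{-1}\Vert_{L^\infty(SM)}$. Compactness of $\Sigma_c$ together with the perturbation uniformity of Theorem \ref{linearlocalstable}\ref{linearlocalstable3} yields a single step size $h$ valid along all of $\Sigma_c$. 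A finite subcover combined with a cutoff decomposition $f = \chi f + (1-\chi)f$, where $\chi$ localises to a $\rho$-neighbourhood of $\Sigma_c$ inside $M_c$, translates the local estimates into a recursive inequality
\begin{equation*}
\Vert f\Vert_{L^2(\{c \le \rho < c+h/2\})} \le C\bigl(\Vert I_W f\Vert_{H^1(\M_O)} + \Vert f\Vert_{L^2(O_{\ge c+h/2})}\bigr).
\end{equation*}
The comparison between the $X$-ray transform on $M_c$ and on $M$ is handled via the support properties of the cutoff together with the forward estimate Corollary \ref{cor_wxray}, the excess contribution from $\{\rho < c\}$ being absorbed into the second term on the right.

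To pass from an $H^1$-bound on $I_Wf$ to an $L^2$-bound (and thus to a H{\"o}lder rather than Lipschitz estimate), interpolate: by Corollary \ref{cor_wxray} one has $\Vert I_W f\Vert_{H^2(\M_O)} \lesssim \Vert W\Vert_{C^2(SM)}\Vert f\Vert_{H^2(M)} \lesssim \Vert f\Vert_{C^2(M)}$, and the classical inequality $\Vert g\Vert_{H^1} \le \Vert g\Vert_{L^2}^{1/2}\Vert g\Vert_{H^2}^{1/2}$ converts the recursion above into
\begin{equation*}
\Vert f\Vert_{L^2(\{c \le \rho < c+h/2\})} \le C\bigl(\Vert I_W f\Vert_{L^2(\M_O)}^{1/2}\Vert f\Vert_{C^2(M)}^{1/2} + \Vert f\Vert_{L^2(O_{\ge c+h/2})}\bigr).
\end{equation*}
Iterating from $c_{\max}$ down to $c_\ast$ takes $N \sim (c_{\max}-c_\ast)/h$ steps, at each of which the H{\"o}lder exponent compounds by a factor of roughly $1/2$, so the compounded estimate has exponent $\mu(W) \sim 2^{-N}$ and constant $C(W) \sim C^N$. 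Since the step size $h$ is uniformly bounded below in terms of $\Vert W\Vert_{C^k(SM)}\vee \Vert W^{-1}\Vert_{L^\infty(SM)}$ (by Theorem \ref{linearlocalstable}\ref{linearlocalstable2}), so is $N$, yielding the bound \eqref{thmglob2}.

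The main obstacle I anticipate is the bookkeeping required in the comparison between the transforms on the submanifolds $M_c$ and on the ambient $M$, and in ensuring that the cutoff-based assembly of local estimates and the subsequent iteration both preserve uniformity in $W$. A careful adaptation of the cutoff/scattering-relation arguments from \cite{SUV16} and \cite{PSUZ19}, making essential use of the perturbation-uniformity in Theorem \ref{linearlocalstable}\ref{linearlocalstable3} so that the local constants across the family $\{M_c\}$ can be bounded uniformly in both $c$ and $W$, should complete the proof.
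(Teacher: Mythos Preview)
Your overall strategy---layer stripping along level sets of $\rho$, with interpolation $\Vert\cdot\Vert_{H^1}^2\le\Vert\cdot\Vert_{L^2}\Vert\cdot\Vert_{H^2}$ at each step to convert the $H^1$-bound into an $L^2$-bound and compound the H\"older exponent---is exactly the paper's approach. But there is a genuine error in the convexity direction. You set $M_c=M\cap\{\rho\ge c\}$ and assert that $\Sigma_c=\{\rho=c\}$ is a strictly convex boundary component of this manifold. It is not: for a geodesic $\gamma$ tangent to $\Sigma_c$ one has $\tfrac{d^2}{dt^2}\rho(\gamma(t))\big|_{t=0}=\Hess\rho(\dot\gamma,\dot\gamma)>0$, so $\gamma$ immediately moves into $\{\rho>c\}$. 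Thus $\Sigma_c$ is strictly convex as the boundary of the \emph{sublevel} set $\{\rho\le c\}$ and strictly \emph{concave} as a boundary component of your $M_c$. Theorem~\ref{linearlocalstable} therefore applies to the manifold $M_c\defn\{x\in M^\interior:\rho(x)\le c\}$ (the not-yet-controlled interior), and the balls it produces lie on the $\{\rho\le c\}$ side of $\Sigma_c$. Once you make this correction your recursion becomes consistent: the short $M_{c_i}$-geodesics extend to $O$-local geodesics in $M$, and the excess contribution (the difference $I_W f-I_W^{c_i}f$ after the reparametrisation $\beta_{c_i}$) is supported in $O_{\ge c_i}=\{\rho\ge c_i\}$, which is exactly the region already controlled by the induction hypothesis.

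Two smaller points you have not addressed. First, points of $K$ near $\partial M$ are not covered by balls centred on interior level sets $\Sigma_{c_i}$; the paper treats these separately by applying Theorem~\ref{linearlocalstable} directly on $M$ at boundary points (the $p_{0j}$ in Lemma~\ref{lem_strat}). Second, the level sets $\Sigma_c$ degenerate where $\nabla\rho=0$; when $O=M$ this always happens at the unique minimum of $\rho$, and the paper handles it by perturbing $\rho$ (Lemma~\ref{lem_perturb}) so that the minimum lies outside $K$, reducing to the case $\vert\nabla\rho\vert\ge 1$ on $K$. Your smooth-cutoff alternative to the sharp truncation $1_{M_c}f$ is workable, but note that the paper's route via the indicator (Lemma~\ref{lem_error}) already handles the regularity issue by restricting the forward estimate to $H^s$ with $\vert s\vert<1/2$ before interpolating.
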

   
Let us outline the strategy of proof for Theorem \ref{thm_glob}. Using the strictly convex exhaustion function on $O$, we can stratify $K$ into finitely many layers, where the number of layers depends on the weight $W$. As each layer has a strictly convex boundary, one can use the local stability result in Theorem \ref{linearlocalstable} and propagate the stability estimate into the interior of $O$ layer by layer via an induction argument.
 More concretely, Theorem \ref{linearlocalstable} allows to bound the norm of $f$ within a certain layer in terms of the weighted $X$-ray transform, defined with respect to geodesics confined to that layer. As we are actually interested in the transform along complete geodesics in $M$, an error occurs. By virtue of our forward estimates, this error can be bounded in terms of the magnitude of $f$ in the previous layers, which is controlled by the induction hypothesis.
   
  \begin{remark} The H{\"o}lder-exponent $\mu$ in the theorem is of order $2^{-N}$, where $N$ is the number of layers needed to stratify $K$. This in turn  is of order $N=O(h^{-1})$, where $h$  is the `depth' from  Theorem \ref{linearlocalstable}. 
  The integer $k$ that appears in the theorem is essentially the same as in the local stability estimates (Theorem \ref{linearlocalstable}), in particular a hypothetical universal bound $k\le c_d$ in Theorem \ref{linearlocalstable} would remain true in Theorem \ref{thm_glob}.
  \end{remark}

   \begin{remark}
   	For a fixed weight $W$, the result can be improved to allow control on the H{\"o}lder-exponent $\mu$ at the cost of needing bounds on higher derivatives of $f$. Precisely, for any $\mu\in (0,1)$ we have $\Vert f \Vert_{L^2(K)} \le \omega(\Vert f \Vert_{C^\ell(M)}) \Vert I_W f \Vert_{L^2(\M_O)}^\mu$ for $\omega:(0,\infty)\rightarrow (0,\infty)$ non-decreasing (and dependent on the fixed weight $W$) and  $\ell\gg 1$ sufficiently large. To see this, one needs to amend Lemma \ref{lem_error} below by using different interpolation spaces.
   \end{remark}

	We first
	 discuss some notation and auxiliary results that are used in the proof of Theorem \ref{thm_glob}. In the following we fix a strictly convex function $\rho:O\rightarrow \R$ with 
	compact super-level sets $O_{\ge c} = \{x\in O: \rho(x) \ge c\}$ for $c>\inf_O \rho$. 	Then
	\begin{equation}\label{mc0}
	M_c =\{x\in M^\interior: \rho(x) \le c \}
	\end{equation}
	is a (possibly non-compact) manifold with strictly convex boundary and geodesics in $M_c$ with endpoints on the level set $\{\rho = c\}$ can be parametrised by the set 
\begin{equation}\label{mc}
	\M_c = \{(x,v)\in SM^
   	\interior: \rho(x)=c, \d \rho(v) \le 0, \gamma_{x,v}(\tau(x,v)) \in O\}.
\end{equation}
We denote with $I^c_W f :\M_c\rightarrow \C^m$ the weighted $X$-ray transform on $M_c$, defined via integrals along the portion of geodesics within $M_c$. The following Lemma compares this with the full $X$-ray transform on $M$ and provides the key estimate that drives the layer stripping argument.

\begin{lemma}[Error-bound] \label{lem_error}
	Let $f\in C^\infty(M,
	\C^m)$ and $W\in C^\infty(SM,\C^{m\times m})$, then  for all $0<\mu\le 1$ we have 
	\begin{equation} \label{lem_error1}
	 \Vert I_W^cf \Vert_{H^1(\M_c)}^2 \lesssim_\mu C(W)   \left[1+\left(\frac{\Vert f \Vert_{L^2(O_{\ge c})}}{\Vert I_W f\Vert_{L^2(\M_O)}}\right)^\mu\right] 
 \cdot \Vert f \Vert_{H^2(M)}^{2-\mu} \Vert I_Wf\Vert_{L^2(\M_O)}^\mu,
	\end{equation}
	where $C(W)>0$ is bounded when  $\Vert W \Vert_{C^2(SM)}$ is bounded.
\end{lemma}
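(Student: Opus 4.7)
The plan is to compare $I_W^c f$ with the full transform $I_W f$, extract an $L^2$-error bound via integral geometry, and then promote this to the desired $H^1$-estimate by a Sobolev interpolation between $L^2$ and $H^2$.

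Every $(x,v)\in\M_c$ sits on a unique complete $M$-geodesic whose entry point into $M$ defines a smooth map $B:\M_c\to\partial_+SM$. Strict convexity of $\rho$ along geodesics, together with the defining condition $\gamma_{x,v}(\tau(x,v))\in O$, guarantees that the complementary sub-arcs (outside $M_c$) stay in $\{\rho>c\}\subset O_{\ge c}\subset O$, so $B(\M_c)\subset\M_O$ and the Jacobian of $B$ is uniformly controlled by the geometry of $(M,g,\rho)$. One then has the pointwise decomposition
\begin{equation*}
I_W^c f(x,v)\;=\;(I_W f)(B(x,v))\;-\;E(x,v),
\end{equation*}
where $E(x,v)$ collects the contribution of the two sub-arcs lying in $\{\rho>c\}$.

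For the $L^2$-bound, $\Vert I_W f\circ B\Vert_{L^2(\M_c)}\lesssim\Vert I_W f\Vert_{L^2(\M_O)}$ follows from the Jacobian bound, while Cauchy--Schwarz yields $|E(x,v)|^2\le T_{\max}\cdot\I\!\bigl(\mathbf{1}_{\{\rho>c\}}|Wf|^2\bigr)(B(x,v))$ with $T_{\max}=\sup_{SM}\tau$; integration over $\M_c$ together with Santal\'o's formula gives $\Vert E\Vert_{L^2(\M_c)}\lesssim\Vert W\Vert_{L^\infty(SM)}\Vert f\Vert_{L^2(O_{\ge c})}$. Combining,
\begin{equation*}
\Vert I_W^c f\Vert_{L^2(\M_c)}\;\le\; C(W)\bigl(\Vert I_W f\Vert_{L^2(\M_O)}+\Vert f\Vert_{L^2(O_{\ge c})}\bigr),
\end{equation*}
with $C(W)$ controlled by $\Vert W\Vert_{L^\infty(SM)}$.

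For the $H^2$-side, decomposing in the same way and treating each piece separately delivers $\Vert I_W^c f\Vert_{H^2(\M_c)}\lesssim\Vert W\Vert_{C^2(SM)}\Vert f\Vert_{H^2(M)}$: the main term $I_W f\circ B$ is handled by Corollary~\ref{cor_wxray} followed by a smooth change of variables, and $E$ by direct differentiation, using smoothness of $B$ and of the truncation endpoints on $\M_c$ (a consequence of strict convexity of $\rho$ along geodesics). The elementary interpolation
\begin{equation*}
\Vert g\Vert_{H^1}^2\;\le\;\Vert g\Vert_{H^2}^{2-\mu}\,\Vert g\Vert_{L^2}^\mu,\qquad \mu\in[0,1],
\end{equation*}
obtained by writing $\Vert g\Vert_{H^1}^2=\Vert g\Vert_{H^1}^{2-2\mu}\cdot\Vert g\Vert_{H^1}^{2\mu}$ and inserting the standard $\Vert g\Vert_{H^1}^2\le\Vert g\Vert_{H^2}\Vert g\Vert_{L^2}$ into the second factor, applied to $g=I_W^c f$, together with $(a+b)^\mu\le a^\mu+b^\mu$, yields
\begin{equation*}
\Vert I_W^c f\Vert_{H^1(\M_c)}^2\;\lesssim_\mu\; C(W)\,\Vert f\Vert_{H^2(M)}^{2-\mu}\bigl(\Vert I_W f\Vert_{L^2(\M_O)}^\mu+\Vert f\Vert_{L^2(O_{\ge c})}^\mu\bigr),
\end{equation*}
which is \eqref{lem_error1} after factoring out $\Vert I_W f\Vert_{L^2(\M_O)}^\mu$. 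The main technical obstacle is securing the $H^2$-forward bound: since $M_c$ may fail to be a compact, strictly convex, non-trapping manifold on its own, one cannot directly invoke Corollary~\ref{cor_wxray} for $I_W^c$; the decomposition above sidesteps this by pushing the Sobolev regularity question onto $\partial_+SM$ for the main term and onto explicit (smooth) differentiation under the integral sign for $E$.
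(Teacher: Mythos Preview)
Your argument is correct and follows the same overall architecture as the paper: pull back via the map $\beta_c\colon\M_c\to\M_O$ (your $B$), split $I_W^c f$ into the full transform plus an error supported on $O_{\ge c}$, bound the error in $L^2$, obtain an $H^2$ forward bound, and interpolate. Two points of comparison are worth noting.

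\emph{The $H^2$ forward bound.} You flag that $M_c$ need not be compact and therefore avoid applying Corollary~\ref{cor_wxray} directly to $I_W^c$, instead bounding $\Vert I_W f\circ B\Vert_{H^2}$ and $\Vert E\Vert_{H^2}$ separately. The paper simply asserts $\Vert I_W^c f\Vert_{H^2(\M_c)}\lesssim \Vert W\Vert_{C^2}\Vert f\Vert_{H^2}$, deferring the justification to a footnote (``applied to a suitable compact extension of $M_c$''). Your route is legitimate but requires more bookkeeping: differentiating $E$ produces boundary terms (evaluations of $Wf$ and its derivatives on the level set $\{\rho=c\}$) which must be estimated via trace inequalities to land on $\Vert f\Vert_{H^2(M)}$ rather than $\Vert f\Vert_{C^2(M)}$. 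This works, but the paper's direct forward estimate is cleaner.

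\emph{The interpolation step.} You use $\Vert g\Vert_{H^1}^2\le\Vert g\Vert_{H^2}^{2-\mu}\Vert g\Vert_{L^2}^\mu$ directly. The paper instead uses the standard $\Vert g\Vert_{H^1}^2\le\Vert g\Vert_{L^2}\Vert g\Vert_{H^2}$ and only afterwards introduces $\mu$ by writing $\Vert I_W f\Vert_{L^2}=\Vert I_W f\Vert_{L^2}^{1-\mu}\Vert I_W f\Vert_{L^2}^\mu$ and bounding the first factor via a forward estimate. The two are algebraically equivalent.
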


\begin{proof} Each geodesic in $M_c$ with endpoints on the level set $\rho = c$ can be extended to a complete $O$-local geodesic in $M$ and we denote the corresponding map between initial conditions by
\begin{equation}
\beta_c : \M_c\rightarrow \M_O\subset \partial_+SM ,\quad (x,v) \mapsto \varphi_{-\tau(x,-v)}(x,v).
\end{equation}
The weighted $X$-ray transform on $M_c$ can then be written as
\begin{equation}
I^c_Wf(x,v) = I_W(1_{M_{c}} f)(\beta_c(x,v)),
\end{equation}
where $1_{M_c}$ is the indicator function of $M_c$. As $\beta$ extends smoothly to the closure of $\M_c$, pull-back by $\beta^{-1}$ defines a bounded map $H^s(\beta(\mathcal{M}_c))\rightarrow H^s(\mathcal{M}_c)$ and for all $s\in \R$ we have
 \begin{equation}
 \begin{split}
 \Vert I^c_W f\Vert_{H^s(\mathcal{M}_c)} & \lesssim_{c,s} \Vert I_W(1_{M_c}f) \Vert_{H^s(\M_O)}\\
 &\le \Vert I_W f\Vert_{H^s(\M_O)} + \Vert I_W(1_{O_{\ge c}} f)\Vert_{H^s(\M_O)}.
 \end{split}
 \end{equation}
The last term accounts for the error that is made by integrating along 
 complete geodesic in $M$ rather than the portion within $M_c$. We can bound this error by a forward-estimate (Cor.\,\ref{cor_wxray}), as long as the truncated function $1_{O_{\ge c}}f$ is of regularity $H^s$. This restricts the choice of $s$ to $\vert s \vert <1/2$, for which we obtain
 \begin{equation}
 \begin{split}
 \Vert I^c_W f\Vert_{H^s(\mathcal{M}_c)} &\lesssim_{c,s}  \Vert I_W f\Vert_{H^s(\M_O)} + \Vert W \Vert_{C^1(SM)} \Vert f\Vert_{H^s(O_{\ge c})}\\
 \end{split}
 \end{equation}
 In order to estimate the $H^1$-norm of  $I^c_Wf$, we employ the interpolation inequality $\Vert \cdot\Vert_{H^1}^2\le \Vert \cdot\Vert_{L^2}\Vert\cdot\Vert_{H^2}$ on $\mathcal{M}_c$ and estimate the $H^2$-term via the forward-estimate\footnote{This follows from Corollary \ref{cor_wxray}, applied to a suitable compact extension of $M_c$.}
 \begin{equation}
 \Vert I^c_W f\Vert_{H^2(\mathcal{M}_c)} \lesssim \Vert W\Vert_{C^2(SM)} \Vert f\Vert_{H^2(O)}.
 \end{equation}
\noindent Combining the estimates in the preceding displays (for $s=0$) and bounding
the first factor in  $\Vert I_Wf\Vert_{L^2(\M_O)}=\Vert I_Wf\Vert_{L^2(\M_O)}^{1-\mu}\Vert I_Wf\Vert_{L^2(\M_O)}^\mu$ via another forward estimate we get
\begin{equation}
\begin{split}
 \Vert I_W^cf \Vert_{H^1(\M_c)}^2 \lesssim&_{c,s}\left( \Vert f \Vert^{1-\mu}_{L^2(M)}\Vert W \Vert_{L^\infty(SM)}^{1-\mu} \cdot \Vert I_W f \Vert^\mu_{L^2(\M_O)} + \Vert W \Vert_{C^1(SM)} \Vert f \Vert_{L^2(O_{\ge c})}\right) \\ 
 & \times \Vert W \Vert_{C^2(SM)} \cdot \Vert f \Vert_{H^2(M)} \\
 \le & ~  (1 + \Vert W \Vert_{C^2(SM)} )^2 \cdot  \left(1 + \Vert f \Vert_{L^2(O_{\ge 0})}^\mu / \Vert I_W f \Vert_{L^2(\M_O)}^\mu\right)\\
  & \times \Vert f \Vert_{H^2(M)}^{2-\mu} \Vert I_W f\Vert_{L^2(\M_O)}^\mu,
 \end{split}
\end{equation}
as desired.\end{proof} 

The next result is a technical Lemma that provides a convenient stratification of $K$  into layers. The parameter $h>0$ below will later be the `intial penetration depth' from Theorem \ref{linearlocalstable}.

   \begin{lemma}\label{lem_strat}
   Suppose $K\subset O$ and $\vert \nabla \rho \vert \ge 1$ on $K$.  
\begin{enumerate}[label=(\roman*)]
	\item \label{lem_strata}  For every $h>0$ there exists a radius $0 < r(h) \le h$ (non-decreasing in $h$) such that for $p\in K\cap \partial M_c$ with $\dist(p,\partial M)>h/2$ we have
	\begin{equation} \label{lem_strataeq}
   B(p,r(h))\cap M_c \subset \bigcup_{(x,v)\in \beta(\M_c)} \gamma_{x,v}([0,\tau(x,v)]).
   \end{equation}
   \item  \label{lem_stratb} For all $h>0$ there are finitely many numbers 
   \[
\quad \sup_K\rho =c_0 > c_1 \ge\dots> c_N > c_{N+1}=\inf_K\rho
   \quad (N =O(h^{-1}) )  
   \] as well as points $p_{ij}\in K$ ($i=0,\dots,N, j=1,\dots,J_i$) with the following properties: We have $p_{0j}\in \partial M$,  $p_{ij}\in\{\rho=c_i\}$ ($i=1,\dots, N$) and 
   \begin{equation}\label{lem_stratbeq}
   		\{x\in K : c_{i} \ge \rho(x) \ge c_{i+1}\} \subset  \bigcup_{j=1}^{J_0}B(p_{0j},h)\cup  \bigcup_{j=1}^{J_i} B(p_{ij},r) 
   		\end{equation}
   		for $i=0,\dots, N$ (where the second union is redundant for $i=0$).
\end{enumerate}   
   \end{lemma}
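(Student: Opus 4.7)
My plan is to address the two parts separately: (i) relies on strict convexity of $\rho$ along geodesics and a uniform implicit function argument, while (ii) reduces to a standard compactness-based covering.

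\medskip

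For (i), the key observation is that along any unit-speed geodesic $\gamma$ in $M$, the function $t\mapsto \rho(\gamma(t))$ is strictly convex with second derivative $\Hess\rho(\dot\gamma,\dot\gamma)$, uniformly bounded below on compact subsets of $O$. Consequently, for any $y\in M_c^\interior$ close to $p$, every geodesic through $y$ hits $\{\rho=c\}$ at exactly two times $t_\pm(y,w)$ depending smoothly on the initial direction $w$, and the corresponding segment lies in $M_c$ with reversed initial condition in $\M_c$. For $y$ on or near $\partial M_c$, I would pick a reference direction $w_0$ with $\d\rho(w_0)<0$ and apply the implicit function theorem to $(t,y,w)\mapsto \rho(\varphi_t(y,w))$ at $(t_-(p,w_0),p,w_0)$, obtaining a smooth family of such segments through each $y$ in a neighbourhood of $p$. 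The hypothesis $\dist(p,\partial M)>h/2$ ensures that, if the neighbourhood radius $r$ is small compared to $h$ and to the local geometry of $\rho$, the segment stays at distance at least $h/4$ from $\partial M$ and its backward extension to $M$ remains $O$-local, producing an initial condition in $\beta(\M_c)\subset \partial_+ SM$. Smoothness of $\rho$ and the flow, together with compactness of $K$, yield a single $r(h)$ independent of $p$ and $c$; a quantitative inspection of the implicit function step gives $r(h)\gtrsim h$.

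\medskip

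For (ii), the argument is a direct cover-by-compactness. First, the compact boundary slab $K\cap \overline{B(\partial M,h/2)}$ is covered by finitely many balls $B(p_{0j},h)$ with $p_{0j}\in\partial M$, giving the $i=0$ piece. Since $|\nabla\rho|$ is bounded above and below by positive constants on $K$ (the lower bound from the hypothesis, the upper bound from smoothness), $\rho$-distances and spatial distances on $K$ are comparable. I then choose the levels $c_i$ by starting at $c_0=\sup_K\rho$ and decreasing in equal steps $\Delta$ proportional to $r(h)$ until $\inf_K\rho$ is reached, which forces
\[
N=O\bigl((\sup_K\rho-\inf_K\rho)/r(h)\bigr)=O(h^{-1}).
\]
Each slab $\{c_{i+1}\le\rho\le c_i\}\cap K\setminus B(\partial M,h/2)$ lies within spatial distance $\lesssim r(h)$ of $\{\rho=c_i\}$, hence is covered by finitely many balls $B(p_{ij},r(h))$ centred on that level set, by compactness of the slab.

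\medskip

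I expect the main obstacle to be obtaining the \emph{lower} bound $r(h)\gtrsim h$ in (i), since the explicit count $N=O(h^{-1})$ in (ii) is only reached when $r(h)$ and $h$ are genuinely comparable; the stated upper bound $r(h)\le h$ alone would force $N\lesssim r(h)^{-1}$, which could be much larger than $h^{-1}$. This requires a quantitative version of the implicit function argument with all constants (the lower bound on $\Hess\rho$, the oscillation of $|\nabla\rho|$, the strict convexity constant of the level sets) controlled uniformly over $K$ and independent of the specific value of $c$. Once this uniformity is in place, the remainder of the proof is a routine compactness and covering argument.
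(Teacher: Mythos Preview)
Your approach is sound and broadly parallels the paper's, but with a notable difference in part~(i). The paper does not run an implicit function argument at all: it simply observes that the set $V_c$ on the right of \eqref{lem_strataeq} is open and contains $\partial M_c$, and then invokes compactness of $K\cap\partial M_c\cap\{\dist(\cdot,\partial M)\ge h/2\}$ to extract a uniform radius. Your route via a quantitative implicit function theorem is more laborious but has the advantage of giving uniformity in $c$ directly and of being, in principle, quantitative; the paper's soft argument is terser but leaves the uniformity over $c$ (and any explicit lower bound on $r(h)$) implicit. For part~(ii) the two arguments are essentially the same---the paper also takes equal steps $c_i=c_{i-1}-r/2$, and uses the gradient flow of $\rho$ (rather than your distance-comparability) to connect a point in a slab to the nearby level set; both mechanisms exploit $|\nabla\rho|\ge 1$ on $K$.

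Your closing concern is well placed and in fact applies to the paper's own proof: the construction in~(ii) produces $N=O(r(h)^{-1})$, and the stated $N=O(h^{-1})$ would require $r(h)\gtrsim h$, which neither the paper's openness/compactness argument nor your implicit-function sketch actually establishes (the parameter $h$ enters only through the distance constraint, not through the size of the neighbourhood in the implicit function step). For the downstream application this is harmless: in the layer-stripping argument one only needs $N$ bounded when $h$ is bounded below, and since $r(h)$ is non-decreasing this follows from $N\lesssim r(h)^{-1}$. So you should not invest effort in proving $r(h)\gtrsim h$; the weaker monotonicity statement suffices, and the $O(h^{-1})$ in the lemma should be read as $O(r(h)^{-1})$.
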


   \begin{proof}    Let us denote the set on the right hand side of Lemma \ref{lem_strataeq} by $V_c$. It is straightforward to see that $\partial M_c\subset V_c$ and that $V_c$ is open. In particular, $V_c$ contains an open ball around each point of $\partial M_c$. As the set of points on $\partial M_c$ with $\dist(\cdot, \partial M)\ge h/2$ is compact, the radius of the balls can be chosen uniformly (depending on $h$), which is equivalent to the statement of Lemma \ref{lem_strat}\ref{lem_strata}.\\

   \begin{figure}[h]
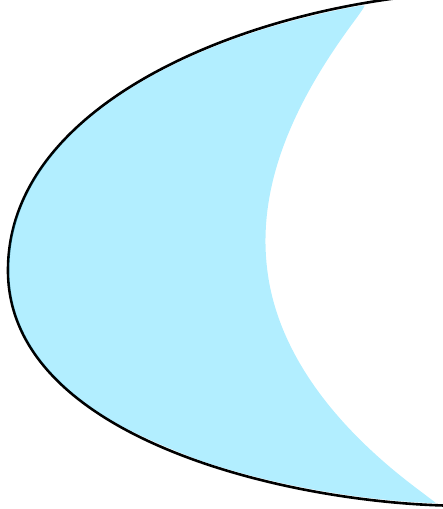   
\caption{The layers from Lemma \ref{lem_strat}}
   \end{figure}

   For part \ref{lem_stratb} we let $N(h)=2\lceil{(\sup_K \rho - \inf_K \rho)/r(h)}\rceil$ and put $c_i=c_{i-1}-r/2$ for $i=1,\dots,N$, where $c_0 = \sup_K \rho$. The boundary points $p_{01},
  \dots, p_{0J_0}$ are then chosen such that the $h$-balls around them cover the compact set $\partial M\cap K$. Now let $x\in K$ be such that $c_i \le \rho(x) \le c_{i+1}$ for some $i=0,\dots, N$. If $i=0$ or $\dist(x,
  \partial M) < h/2$, then $x \in B(p_{0,j},h)$ for some $j=1,\dots, J_0$. If $i\ge 1$ and $\dist(x,
  \partial M) \ge h/2$ 
  we claim that $d(x,p)<r$ for some $p\in \partial{M_{c_i}}$. Due to the compactness of $\partial M_{c_i} \cap \{\dist(\cdot ,\partial M) \ge h/2\}$, finitely many such points $p_{i1},\dots, p_{iJ_i}\in \partial M_{c_i}$ suffice to establish \eqref{lem_stratbeq}, so it remains to verify the claim.\\
  Indeed, if we let $t\mapsto c_x(t)$ be the unit-length curve with $c_x(0)=x$ and $\d \rho(\dot c_x(t)) = \vert \nabla \rho(c_x(t)) \vert$, then $\rho$ increases along $c_x$ and by \cite[Lem,\,2.5]{PSUZ19} the curve stays in $O$ until it hits the boundary of $M$. Let $\ell \ge 0$ be the first time for which $p=c_x(\ell) \in \partial M \cup \partial M_{c_i}$. Then 
  \begin{equation}
  d(x,p) \le \ell \le \int_0^\ell \d \rho(\dot c_x(t) ) \d t = \rho(p) - \rho(x) \le c_i - c_{i+1} \le r/2
  \end{equation}
  and we must have $p \in \partial M_{c_i}$ and $x\in B(p,r)$, as desired.
   \end{proof}
   
	The next Lemma is of importance for the full data problem ($O=M$) and allows to perturb  convex foliations in a way that shifts the point of degeneracy.

   \begin{lemma}\label{lem_perturb} Suppose $\rho:M\rightarrow \R$ is smooth and strictly convex. Then there exists another $\tilde \rho:M\rightarrow \R$, smooth and strictly convex, such that $\rho$ and $\tilde \rho$ achieve their global minima at different points. 
   \end{lemma}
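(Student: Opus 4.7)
The plan is to construct $\tilde\rho$ as a small additive perturbation $\tilde\rho = \rho + \epsilon f$, where $\epsilon>0$ is small and $f\in C^\infty(M,\R)$ is chosen judiciously. First, strict convexity of $\tilde\rho$ will be preserved automatically: since $\Hess(\rho)$ is positive definite on the compact manifold $M$, there is a uniform lower bound $\Hess(\rho)\ge \delta g$ for some $\delta>0$, hence $\Hess(\tilde\rho)=\Hess(\rho)+\epsilon\Hess(f)$ remains positive definite for all $\epsilon>0$ sufficiently small (depending only on $\sup_M \Vert \Hess f\Vert_g$).

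The main content is to arrange that a chosen minimum point $p_0$ of $\rho$ fails to be a minimum of $\tilde\rho$, and for this I would distinguish two cases. If $p_0\in M^\interior$, then $d\rho(p_0)=0$ and any $f$ with $df(p_0)\neq 0$ (for example a coordinate function cut off by a bump near $p_0$) yields $d\tilde\rho(p_0)=\epsilon df(p_0)\neq 0$, so $p_0$ is not a critical point of $\tilde\rho$. If $p_0\in\partial M$, I first observe that $p_0$ is also a minimum of $\rho\vert_{\partial M}$, so the tangential component $d\rho(p_0)\vert_{T_{p_0}\partial M}$ must vanish. I would then pick $f$ with nonvanishing tangential differential at $p_0$ — e.g. in boundary-adapted coordinates $(y_1,\ldots,y_{d-1},x)$ with $\partial M=\{x=0\}$ near $p_0$, set $f=\chi\cdot y_1$ for a smooth bump $\chi$ with $\chi(p_0)=1$ supported in the chart. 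Then $d\tilde\rho(p_0)\vert_{T_{p_0}\partial M}\neq 0$, so $p_0$ is not even a local minimum of $\tilde\rho\vert_{\partial M}$ and hence not a global minimum of $\tilde\rho$. Either way, compactness of $M$ forces $\tilde\rho$ to attain its infimum at some point $p_1\neq p_0$.

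I do not anticipate any serious obstacle; the argument is an elementary perturbation. The only point that requires some care is the boundary case, where the perturbation must have nontrivial \emph{tangential} differential at $p_0$ — a purely normal perturbation would be absorbed into the already inward-pointing $d\rho(p_0)$ and would leave $p_0$ as a boundary minimum for small $\epsilon$. Should $\rho$ admit several minima, the same recipe applies by choosing $f$ with suitable tangential differential data at each of them simultaneously; under the standing geometric assumptions one in fact expects the minimum of a smooth strictly convex function on $M$ to be unique, so this is a non-issue in practice.
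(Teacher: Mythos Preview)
Your argument is correct and complete; the additive perturbation $\tilde\rho=\rho+\epsilon f$ does the job, and your case distinction between interior and boundary minima (with the key observation that a nontrivial \emph{tangential} differential is what is needed at a boundary minimum) is exactly right.

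The paper takes a different route: it picks a vector field $V$ tangent to $\partial M$ with $V(x^*)\neq 0$, lets $(\psi_t)$ be its flow, and sets $\tilde\rho=\psi_t^*\rho$ for small $t>0$. Strict convexity is again preserved by a compactness argument (the Hessian of $\psi_t^*\rho$ is $C^0$-close to that of $\rho$), and the minimum visibly moves to $\psi_{-t}(x^*)\neq x^*$. Compared to your additive perturbation, the flow argument has the minor conceptual advantage that the location of the new minimum is explicit and no interior/boundary case split is needed (the tangency of $V$ to $\partial M$ handles both at once). Your approach, on the other hand, is more elementary---no flows, just linearity of the Hessian---and makes the mechanism (killing the first-order optimality condition at $x^*$) completely transparent. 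Both proofs tacitly use uniqueness of the minimiser of a strictly convex function on $M$, which holds in the ambient setting of the paper (compact, strictly convex boundary) since any two points are joined by a geodesic in $M$ along which $\rho$ is strictly convex.
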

   \begin{proof}
Suppose $\rho$ achieves its minimum at the point $x^*\in M$ and let $V$ be a smooth vector field on $M$ which is tangent to $\partial M$ and non-vanishing at $x^*$. Denote the flow of $V$ by $(\psi_t:t\ge 0)$, then $\psi_t^*\rho\in C^\infty(M,\R)$ is strictly convex for $t$ sufficiently small and achieves its (unique) minimum at $x^*_t=\psi_{-t}(x^*)$. Since $V(x_*)\neq 0$, we have $x_t^* \neq x^*$ for $t>0$ sufficiently small and thus $\tilde\rho = \psi_t^*\rho$ and $\rho$ achieve their minimum at different points.
   \end{proof}

	\begin{proof}[Proof of Proposition \ref{thm_glob}]
	Let $\rho:O\rightarrow \R$ be a strictly convex exhausting function and denote  $\rho_*=\inf_O\rho$. We first reduce to the situation that 
	\begin{equation}\label{glob1}
	\vert \nabla \rho(x) \vert \ge 1 \quad \text{ and } \quad \rho(x)>\rho_*\quad \text{for } x\in  K.
	\end{equation} 
	Indeed, after scaling $\rho$ if necessary, \eqref{glob1} can only fail, when $O=M$ \cite[Lemma 2.5]{PSUZ19} and in this case we argue as follows:  Take $\tilde \rho$ as in Lemma \ref{lem_perturb},  then we may choose $\epsilon>0$ such that $M=\{\rho \ge \rho_*+\epsilon\}\cup \{\tilde\rho \ge \inf_M\tilde \rho+\epsilon\}=K\cup \tilde K$. Then $K$ and $\tilde K$ satisfy \eqref{glob1} for $\rho$ and $\tilde \rho$ respectively and the corresponding stability estimates \eqref{thmglob1} can be combined to bound $
	\Vert I f\Vert_{L^2(M)}$. \newline
	In the remaining proof we argue with fixed  $f\in C^\infty(M,\C^m)$ and $W\in C^\infty(SM,\Gl(m,\C))$ and keep track of the dependency of our constructions on 
	\begin{equation}
	A = \Vert W\Vert_{C^k(SM)} \vee \Vert W^{-1}  \Vert_{L^\infty(SM)} 
	\end{equation}
	for an integer $k\ge 2$ to be specified. Let us first summarise the consequences of Theorem \ref{linearlocalstable}:
		Each $p\in K$ is a strictly convex boundary point of either $M$ itself or of the manifold $M_c$, defined in \eqref{mc0}. We can thus apply Theorem \ref{linearlocalstable}, either with respect to the local $X$-ray transform $I_W$ on $M$ or  the one on $M_c$, which we denote by $I^c_W$. Thus, for all $f\in L^2(M,\C^m)$ we have
	\begin{eqnarray}
	\Vert f \Vert_{L^2(B(p,h))} \le & C \Vert I_Wf\Vert_{H^1(\M_O)}, &  p\in K\cap \partial M \label{glob2} \\
	\Vert f \Vert_{L^2(B(p,r)\cap M_{c} })  \le &  C \Vert I^c_W f \Vert_{H^1(\M_c)}, &  p\in K\cap \partial M_c\backslash B(\partial M,h/2)
	\label{glob3}
	\end{eqnarray}
	where $C,h>0$ depend on $W$ and $r=r(h)$ is as in Lemma \ref{lem_strat}\ref{lem_strata}. By part \ref{linearlocalstable3} of the theorem, the choice of regularity $k$ that appears in \eqref{linearlocalstable2eqn} can be made uniform over the compactum $K$, and will be fixed from now on (assuming $k\ge 2$ without loss of generality). We then have
$
	C\vee h^{-1} \lesssim_A 1.
	$\newline
We proceed by stratifying $K$ into layers $\{x\in O:c_i\ge \rho(x) > c_{i+1}\}$ ($i=0,\dots, N$) for $c_0,\dots,c_{N+1}$  as in Lemma \ref{lem_strat}\ref{lem_stratb} with $N\lesssim h^{-1} \lesssim_A 1$. 
We will prove inductively that
\begin{equation}\label{glob7}
\Vert f \Vert_{L^2(O_{c_i})} \lesssim_A \Vert f \Vert_{C^2(M)}^{1-2^{-i}}\Vert I_W f\Vert_{L^2(\M_O)}^{2^{-i}},\quad i=1,\dots, N+1
\end{equation}
which implies \eqref{thmglob1}. For $i=0$ this is a straightforward consequence of \eqref{glob2}. Indeed, for every $p\in \partial M$, we can use the interpolation inequality $\Vert \cdot\Vert_{H^1}^2\le \Vert \cdot\Vert_{L^2}\Vert\cdot\Vert_{H^2}$ on $\M_O$ and a forward estimate (Thm.\,\ref{cor_wxray}) to obtain
\begin{equation}\label{glob8}
\Vert f \Vert_{L^2(B(p_{0j},h))}    \lesssim_A \Vert f \Vert_{C^2(M)}^{1/2} \Vert I_Wf\Vert^{1/2}_{L^2(\M_O)},\quad j=1,\dots, J_0,
\end{equation}
where the points $p_{01},\dots, p_{0J_0}$ are as in Lemma \ref{lem_strat}\ref{lem_stratb}. As the corresponding $h$-balls cover $O_{\ge c_1}$, this implies \eqref{glob7} for $i=1$.\\
Next assume the estimate has been established for some $1\le i < N$ and consider the points $p_{i1},\dots,p_{iJ_i}$ from the Lemma. Then \eqref{glob3} and Lemma \ref{lem_error}, combined with the induction hypothesis which allows to bound the bracketed term in  \eqref{lem_error1}, yield
\begin{equation}
\begin{split}
\Vert f\Vert_{L^2(B(p_{ij},r)\cap M_{c_i})}  \lesssim_A \Vert f \Vert_{C^2(M)}^{1-2^{-(i+1)}} \Vert I_W f\Vert_{L^2( \M_O)}^{2^{-(i+1)}}.
\end{split}
\end{equation}
A similar bound can be achieved on the balls $B(p_{0j},h)$ (decreasing the H{\"o}lder-exponent as in the proof of Lemma \ref{lem_error}) and together with the induction hypothesis we conclude \eqref{glob7} for $i+1$. This finishes the proof.
\end{proof}

   \subsection{Proof of Theorem \ref{mainthm}} We conclude the main stability theorem by combining the linear estimates from the previous section with pseudo-linearisation formula and the bounds on integrating factors from Theorem \ref{thm_mainintfac}.
   
   \begin{proof}[Proof of  Theorem \ref{mainthm}]
   Let $\Phi,\Psi \in C^
\infty(M,\C^{m\times m}) $ and recall from Lemma \ref{lem_pseudolin}, that $C_\Phi - C_\Psi =R_\Phi I_{\mathcal{W}_{\Phi,\Psi} }(\Phi - \Psi) \alpha^* R_\Psi^{-1}$, where $\mathcal{W}_{\Phi,\Psi}A = R_{\Phi}^{-1} A R_\Psi$ and we may choose smooth integrating factors $R_\Phi$ and $R_\Psi$ as in Theorem \ref{thm_mainintfac}.\\
Now for $K\subset O \subset M$ as in the theorem, we can apply Theorem \ref{thm_glob} to obtain
\begin{equation}
\Vert \Phi - \Psi \Vert_{L^2(K)} \le C(\mathcal{W}_{\Phi,\Psi}) \cdot \Vert \Phi-\Psi \Vert_{C^2(M)}^{1-\mu(W)}\cdot \Vert C_\Phi - C_\Psi \Vert_{L^2(\M_O)}^{\mu(\mathcal{W}_{\Phi,\Psi})}
\end{equation}
with $C(\mathcal{W}_{\Phi,\Psi})  \vee \mu(\mathcal{W}_{\Phi,\Psi})^{-1}$ bounded above by
\begin{equation}
 \omega(\Vert \mathcal{W}_{\Phi,\Psi} \Vert_{C^k(SM)} \vee \Vert \mathcal{W}_{\Phi,\Psi}^{-1} \Vert_{L^\infty(SM)})
\end{equation}
for a non-decreasing function $\omega:[0,\infty)\rightarrow [0,\infty)$. It remains to bound the norms in the previous display in terms of $\Vert \Phi \Vert_{C^k(M)} \vee \Vert \Psi \Vert_{C^k(M)}$. Note that $\Vert \mathcal{W}^{\pm 1}_{\Phi,\Psi} \Vert_{C^k(SM)} \lesssim \Vert R_\Phi^{\mp 1} \Vert_{C^k(M)} \cdot \Vert R_\Psi^{\pm 1}\Vert_{C^k(M)}$, hence the proof is finished by using the bounds from Theorem \ref{thm_mainintfac}.
 \end{proof}
   
   \section{Statistical application} \label{s_stats}
	In this section we demonstrate the scope of our stability estimate (Theorem \ref{mainthm}) by showing how it can be used to establish a statistical consistency result. We will focus on the full data problem ($O=M$) and discuss the two dimensional results from \cite{MNP19} alongside with 
	the case $d\ge 3$.
	Let us therefore assume
	 $(M,g)$ is a compact Riemannian manifold with strictly convex boundary and that we are in either of the following cases:
\begin{enumerate}[label=(\Alph*)]
\item \label{caseA}$d=2$ and $M$ is simple
\item \label{caseB} $d\ge 3$ and $M$ admits a strictly convex function
\end{enumerate}
In both cases we assume for simplicity\footnote{For $d=2$ and $d=3$ no generality is lost, as any manifold satisfying \ref{caseA} or \ref{caseB} is automatically diffeomorphic to a Euclidean ball. In higher dimensions this might fail, but the author is not aware of any counterexamples.
} that (as a smooth manifold) $M$ is the closed unit-ball in $\R^d$. We further assume that the potentials $\Phi$ take values in either $\so(m)=\{A\in \R^{m\times m}: A^T = - A\}$ or $\gl_m(\R)=\R^{m\times m}$ and write $\g$ to denote either choice.
\\
The plan for this section is as follows: We first record all necessary estimates at one place, then give a brief overview of the Bayesian approach of inverse problems and recall the main statistical theorem from \cite{MNP19}, including a sketch of its proof. Finally, in the last subsection, we explain how the proof can be amended to obtain a consistency result in case \ref{caseB}. \\
In order to keep the overlap with \cite{MNP19} at a minimum, the discussion below is brief and heavily relies on \cite{MNP19}. For more background on the statistical framework we refer to the books \cite{GhVa17} and \cite{GiNi16}.

\subsection{Available Estimates} 
In both cases the available forward- and stability-estimates  take the following form:  
\begin{eqnarray}
\Vert C_\Phi - C_\Psi \Vert_{L^2(M)} &\le& c_1(\Phi,\Psi) \cdot \Vert \Phi - \Psi  \Vert_{L^2(M)}  \label{est1} \\
\Vert C_\Phi \Vert_{L^\infty(M)}&\le& c_2(\Phi) \label{est2} \\
\Vert \Phi - \Psi \Vert_{L^2(M)} & \le& C(\Phi,\Psi) \cdot \Vert C_\Phi - C_\Psi \Vert_{L^2(\partial_+SM)}^{\mu(\Phi,\Psi)}, \label{est3}
\end{eqnarray}
Here $c_1(\Phi,\Psi),c_2(\Phi),C(\Phi,\Psi)>0$ and $\mu(\Phi,\Psi)\in (0,1)$ may depend on the potentials. The validity of the estimates and the uniformity properties of the constants can be summarised as follows:  
\begin{itemize}
\item  The forward estimates \eqref{est1} and \eqref{est2} are the same in case \ref{caseA} and \ref{caseB} and hold true for smooth potentials $\Phi,\Psi:M\rightarrow \g$. If $\g = \so(m)$, then $c_1$ and $c_2$ are constant, due to the compactness of $SO(m)$. If $\g = \gl_m(\R)$, then $c_1(\Phi,\Psi)$ and $c_2(\Phi)$ are uniform on $L^\infty$-balls.
\item In case \ref{caseA} and for $\g = \so(m)$ we can choose any integer $k\ge 2$. Then \eqref{est3} holds true for smooth $\Phi,\Psi:M\rightarrow \so(m)$ with $\mu(\Phi,\Psi) = (k-1)/k$ and  $C(\Phi,\Psi)$ uniform on $C^k$-balls. 
\item In case \ref{caseB}  and for $\g = \gl_m(\R)$ there exists an integer $k\gg 1$ such that \eqref{est3} holds true for $\Phi,\Psi:M\rightarrow \gl_m(\R)$ with both $C(\Phi,\Psi)$ and $\mu(\Phi,\Psi)$ uniform on $C^{k}$-balls. 
\end{itemize}
Here we say that a quantity is `uniform on $F$-balls' (for $F=L^\infty(M,\g)$ or $F=C^k(M,\g)$) if its supremum (resp. infimum) over $\{\Phi,\Psi: M\rightarrow \g \text{ smooth} : \Vert \Phi \Vert_F \vee \Vert \Psi\Vert_F \le A\}$ is finite (resp. $>0$) for all $A>0$.\\
The forward estimates are proved in Corollary \ref{cor_naxray} for a general non-trapping manifold (with strictly convex boundary). The stability estimate for case \ref{caseB} is the content of our main theorem (Thm.\,\ref{mainthm}) and the version for case \ref{caseA} is discussed below the main theorem.

\begin{remark}\label{unknown1} An important difference between case \ref{caseA} and \ref{caseB} lies in the role of `regularity parameter' $k$ and H{\"o}lder-exponent $\mu$, which -in the statistical analysis below- determine the choice of prior and the rate of contraction respectively. In case \ref{caseA} one can effectively choose the H{\"o}lder exponent arbitrarily close to $1$ (by sending $k\rightarrow \infty$), 
while in case \ref{caseB}, our method of proof yields an unknown  $k$ and  there is no control over the H{\"o}lder-exponent. See also remark \ref{unknown2}.

\end{remark}

\subsection{Statistical Background} 
The statistical question we are concerned with arises in following experimental setup:
Suppose for $\Phi\in C(M,\g)$ we observe  the data $(X_i,V_i,Y_i)_{i=1}^n$, where 
\begin{equation}\label{experiment}
	Y_i = C_\Phi(X_i,V_i) + \epsilon_i,\quad i=1,\dots,n,
\end{equation}
with directions $(X_i,V_i)$ ($i=1,\dots,n$) drawn independently and uniformly\footnote{Uniform here means that the law of $(X_i,Y_i)$ is the standard Riemannian volume-form on $\partial_+SM$, normalised to have mass $1$.} from $
\partial_+SM$  and independent additive noise given by
\begin{equation}
\epsilon_i= (\epsilon_{ij}:1\le j\le \dim \g)\in \R^{\dim \g}\equiv \g \quad \text{ for } \epsilon_{ij}{\sim} N(0,1) \text{ i.i.d}.
\end{equation}
We write $P^n_\Phi=\L(D_n\vert \Phi)$ for the law of $D_n=(X_i,V_i,Y_i:1\le i\le n)$, arising from \eqref{experiment} with potential $\Phi$. The statistical experiment just described is then encoded in the collection of probability measures $(P_\Phi^n:\Phi \in C(M,\g))$ on the sample space $\D^n=(\partial_+SM\times \g)^n$. \\
The Bayesian approach to estimate $\Phi$ from a sample $D^n=((X_i,V_i,Y_i): 1\le i \le n)\in \D^n$ is to choose a prior $\Pi_n$ on $C(M,\g)$ and compute the posterior probability under the sample $D^n$ of a (Borel-measurable) set $B\subset C(M,\g)$ according to the formula
\begin{equation} \label{posterior}
\Pi_n(\Phi \in B\vert D^n) = \frac{\int_B p^{n}_\Phi(D^n) \Pi(\d \Phi) }{\int p^n_\Phi(D^n) \Pi(\d \Phi)},
\end{equation}
where $p_\Phi^n(D^n)$ is the likelihood of $D^n$ being generated from $\Phi$. Precisely, $p_\Phi^n=p_\Phi^1 \otimes \dots \otimes p_\Phi^1$ ($n$-times), where
$
\log p^1_\Phi(x,v,y) =  -\frac12\vert C_\Phi(x,v) - y \vert_F^2 -\frac{\dim \g}{2} \log(2\pi)
$
(for $(x,v,y)\in \D^1$) and $\vert \cdot \vert_F$ is the Frobenius-norm.\\
Given the posterior one can estimate $\Phi$, for example by the posterior mean which in our setting exists as Bochner-integral in $C(M,\g)$. From a frequentist perspective one then asks how well $\Phi$ is estimated, when the data is generated from \eqref{experiment} with a `true' potential $\Phi_0$ and a first such quality assessment is given by the posterior consistency results below.

\subsection{Posterior consistency in case \ref{caseA}} In order to state the posterior consistency result of \cite{MNP19}, we first review the construction of priors (in arbitrary dimension $d\ge 2$), focusing on their key example  based on Mat{\'e}rn-Whittle-processes.\\
For a given choice of regularity parameter $\alpha > d/2$, define a base prior $\underline{\Pi}=\underline{\Pi}(\alpha)$ on $C(M,\R)$  as law of a centred Gaussian process $(f(x):x\in M)$ with covariance
$
\mathbb{E}f(x) f(y) = \int_{\R^d} e^{i(x-y)\xi}  \langle \xi \rangle^{-2\alpha}\d\xi,
$
where it is understood that $M\subset \R^d$. This so called {\it Mat{\'e}rn-Whittle-process of regularity $\alpha$} is a standard prior choice in 
non-parametric Bayesian statistics (Example 11.8 in \cite{GhVa17}) and satisfies
\begin{equation}\label{rkhs}
\mathrm{RKHS}(\underline{\Pi}) = H^\alpha(M,\R),\quad \underline{\Pi}(C^k(M,\R)) = 1\text{ for } k\in \Z\cap [0,\alpha-d/2)
\end{equation}
where $\mathrm{RKHS}(\cdot)$ stands for the `reproducing kernel Hilbert-space'.\\
The prior $\Pi_1$ on $C(M,\g)$ is then obtained by drawing each component (in an identification $\g\equiv \R^{\dim \g}$) independently from $\underline{\Pi}$. For $n\ge 2$ the prior $\Pi_n$ is defined by scaling $\Pi_1$, precisely
\begin{equation}\label{prior}
\Pi_n = \L\left( n^{-\frac{d}{4\alpha+ 2d}} \Phi \right),\quad \text{ for } \Phi \sim \Pi_1,
\end{equation}
where $\L(\cdot )$ denotes the law of a random variable. \\

Then in case \ref{caseA} ($M$ is a simple surface), the following result holds true:

\begin{theorem}[Thm.\,3.2 in \cite{MNP19}] \label{thmposcon} Suppose we are in case \ref{caseA} above, $\g = \so(m)$ and $\alpha >3$. Then for every $\Phi_0 \in C^\infty(M,\so(m))$, there is a $\gamma>0$ such that
\begin{equation}\label{posteriorcontraction}
\Pi_n(\Phi: \Vert \Phi - \Phi_0 \Vert_{L^2(M)} \ge n^{-\gamma}\vert D^n   ) \rightarrow 0\quad  \text{ as } n\rightarrow \infty
\end{equation}
 in $P^n_{\Phi_0}$-probability. Here $\Pi_n(\cdot\vert D^n)$ are the posteriors, defined in \eqref{posterior}, with respect to the 
scaled Mat{\'e}rn-Whittle-priors in \eqref{prior}  of regularity $\alpha$.
\end{theorem}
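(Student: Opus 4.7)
The plan is to invoke the general posterior contraction theorem for nonparametric regression in the Ghosal--van der Vaart framework. For Gaussian noise the Kullback--Leibler divergence $K(P^n_{\Phi_0},P^n_\Phi)$ is, up to fixed constants, equal to $n \Vert C_\Phi - C_{\Phi_0}\Vert_{L^2(\partial_+SM)}^2$, so the natural `data metric' is
\[
d_n(\Phi,\Psi) = \Vert C_\Phi - C_\Psi \Vert_{L^2(\partial_+SM)}.
\]
I would first establish contraction $\Pi_n(d_n(\Phi,\Phi_0) \ge M\epsilon_n \mid D^n)\to 0$ at some rate $\epsilon_n \to 0$ in $P^n_{\Phi_0}$-probability, and then convert this into the $L^2(M)$-contraction \eqref{posteriorcontraction} by means of the stability estimate \eqref{est3}.

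Three standard conditions need to be verified. First, a prior mass lower bound of the form $\Pi_n(\Phi:d_n(\Phi,\Phi_0)\le \epsilon_n) \ge e^{-cn\epsilon_n^2}$, which via the forward Lipschitz estimate \eqref{est1} reduces to a small-ball bound for the rescaled Mat\'ern--Whittle prior in $L^2(M,\g)$. Classical Gaussian small-ball estimates (together with the RKHS identification in \eqref{rkhs}) show that the rescaling in \eqref{prior} achieves the minimax rate $\epsilon_n = n^{-\alpha/(2\alpha+d)}$ for $\Phi_0\in C^\infty(M,\g)$ (up to logs). Second, I would choose a sieve $\Theta_n = \{\Phi: \Vert\Phi\Vert_{C^k(M)}\le R_n\}$ for a polynomially growing radius $R_n$; Borell's isoperimetric inequality, combined with \eqref{rkhs} and $\alpha > k + d/2$, yields $\Pi_n(\Theta_n^c) \le e^{-(c+4)n\epsilon_n^2}$, and \eqref{est1} bounds the $d_n$-metric entropy of $\Theta_n$ by the $L^2$-entropy of $C^k$-balls, controlled by classical Weyl-type bounds. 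Third, the likelihood ratio tests separating $\Phi_0$ from $\{d_n(\Phi,\Phi_0)\ge M\epsilon_n\}$ on $\Theta_n$ follow from the uniform $L^\infty$-bound \eqref{est2} and standard Hoeffding-type inequalities for Gaussian regression on truncated data.

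Combining these three ingredients (as in e.g. Theorem 7.3.1 of \cite{GhVa17}) gives $d_n$-contraction at rate $\epsilon_n$. Restricting to the sieve $\Theta_n$, on which \eqref{est3} holds with a uniform constant $C_R$ and exponent $\mu = (k-1)/k$, yields on a set of asymptotically full posterior probability the bound
\[
\Vert \Phi-\Phi_0\Vert_{L^2(M)} \le C_R \cdot (M\epsilon_n)^{(k-1)/k}.
\]
Any $\gamma < (k-1)\alpha/(k(2\alpha+d))$ then works; the constraint $k\ge 2$ together with the regularity requirement $\alpha > k + d/2 = k+1$ for $C^k$-draws motivates the choice $\alpha > 3$.

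The main technical difficulty is the joint calibration of the sieve: it must simultaneously have small exterior prior mass (forcing $R_n$ to grow no faster than a polynomial dictated by Borell's inequality), have bounded $d_n$-metric entropy, and be contained in a $C^k$-ball on which the stability constant in \eqref{est3} is finite. For case \ref{caseA} all three demands are met comfortably by $C^k$-balls in the rescaled Mat\'ern--Whittle prior; as remarked in Remark \ref{unknown1}, the freedom in $k$ is what permits a quantitative rate $n^{-\gamma}$ rather than just qualitative consistency.
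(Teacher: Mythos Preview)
Your proposal is correct and follows essentially the same strategy as the paper's sketch: verify the Ghosal--van der Vaart small-ball, sieve-complement, and entropy conditions to obtain contraction in the data metric $\Vert C_\Phi - C_{\Phi_0}\Vert_{L^2(\partial_+SM)}$ (equivalently, Hellinger distance, since here $C_\Phi$ takes values in the compact group $SO(m)$), and then convert to $L^2(M)$-contraction via the stability estimate \eqref{est3} on the sieve.

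One small point worth noting: you take a sieve $\Theta_n=\{\Vert\Phi\Vert_{C^k}\le R_n\}$ with \emph{growing} radius $R_n$, but then invoke a ``uniform constant $C_R$'' in the stability step; if $R_n\to\infty$ you would also need to track the growth of $C_{R_n}$ in \eqref{est3}. The paper sidesteps this by using a \emph{fixed}-radius sieve $\mathcal F_n(A)=\{\Phi_1+\Phi_2:\Vert\Phi_1\Vert_{L^2}\le\delta_n,\ \Vert\Phi_2\Vert_{H^\alpha}\le A\}\cap\{\Vert\Phi\Vert_{C^k}\le A\}$. This is possible precisely because the prior $\Pi_n$ is already \emph{shrinking} by the factor $n^{-d/(4\alpha+2d)}$, so Fernique/Borell give $-\log\Pi_n(\Vert\Phi\Vert_{C^k}>A)\gtrsim n\delta_n^2$ for fixed large $A$, and the $H^\alpha$-component furnishes the required $L^2$-entropy bound. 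With a fixed $A$, the stability constant is genuinely uniform and the final step goes through exactly as you describe.
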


\begin{remark}[Generalisations]
The theorem remains true for a larger class of base-priors (specified in \cite[Condition 3.1]{MNP19}). Further, the regularity of $\Phi_0$ can be relaxed and, by varying $\alpha$, 
one has control over the rate of contraction $\gamma$ (Remark 3.3 in \cite{MNP19}).
\end{remark}

\begin{remark} The scaling rate $\nu={d}/(4\alpha+ 2d)$ in\eqref{prior} is chosen such that, writing $t_* = 2t/(2+t)$ for $t>0$, we have
\begin{equation}\label{entropyrate}
(4\nu / (1-4\nu ) )_* = d/\alpha,
\end{equation} 
which arises as exponent in a classical $L^2$-entropy bound for the unit-ball $\mathbb{B}^\alpha \subset H^1(M,\g)= \mathrm{RKHS} (\Pi_1)$ (cf.\,Lemma \ref{metricentropy}).
\end{remark}

\begin{proof}[Sketch of proof]  
Let $\delta_n = n^{-\alpha/(2\alpha + d)}$($=n^{\nu-1/2}$). Using \eqref{entropyrate} and a  theorem of Li-Linde \cite[Thm.\,1.2]{LiLi99}, one computes the small ball probability
\begin{equation}\label{smallball}
  -  \log \Pi_n(\Vert \Phi\Vert_{L^2(M)} \le \delta_n ) \lesssim n\delta_n^2.
\end{equation} 
The event in the last probability can be changed to $\Vert \Phi - \Phi_0\Vert_{L^2(M)} \le \delta_n$ by a standard argument (Anderson's Lemma, cf. \cite[Cor.\,2.6.18]{GiNi16}) and expressed in terms of the likelihoods $p^n_\Phi,p^n_{\Phi_0}$ by using the forward estimates. A general contraction theorem  (\cite[Thm.\,5.13]{MNP19}) then implies that, for some sufficiently large $m'>0$, we have 
\begin{equation} \label{conhellinger}
\Pi_n(\Phi: h(p^n_\Phi,p^n_{\Phi_0})\le m' \delta_n \vert D^n) \xrightarrow{{P^n_{\Phi_0}}} 1,\quad \text{as } n\rightarrow \infty.
\end{equation}
Here $h(p^n_\Phi,p^n_{\Phi_0})$ denotes the Hellinger-distance, which is 
$\approx \Vert C_\Phi - C_{\Phi_0} \Vert_{L^2(\partial_+SM)}$, as the scattering data is $SO(m)$-valued (\cite[Lem.\,5.14]{MNP19}).\\ 
By \eqref{rkhs} it follows for $0\le k < \alpha -d/2$ that the events $\F'(A)=\{\Vert \Phi \Vert_{C^k(M)} \le A \}$ ($A>0$) have $\Pi_n$-mass approaching $1$ as $n\rightarrow \infty$ (Fernique's theorem, cf. \cite[Thm.\,2.1.20]{GiNi16}), which suggests that one can intersect the event in \eqref{conhellinger} with $\F'(A)$ without destroying the limit. To make this precise one shows, using Borell's isoperimetric inequality \cite{Bor75}, that the slightly smaller events
$
\F_n(A) = \{\Phi_1 + \Phi_2: \Vert \Phi_1\Vert_{L^2(M)} \le \delta_n, \Vert \Phi_2 \Vert_{H^\alpha(M)}\le A\}\cap \F'(A)
$
obey \begin{equation} \label{smallf}
-\log \Pi_n(\F_n(A)^c) \ge \omega(A) n\delta_n^2\quad \text{ and }\quad  \log \mathcal{N}(\F_n(A),h,
\delta_n)\lesssim_A n\delta_n^2
\end{equation}
with $\omega(A)$ unbounded and non-decreasing in $A$  (\cite[Lem.\,5.17]{MNP19}) and where $\log \mathcal{N}$ is the metric entropy, defined above Lemma \ref{metricentropy}.
Then, for $A>0$ sufficiently large, \cite[Thm.\,5.13]{MNP19} indeed implies that
\begin{equation}\label{posconc}
\Pi_n(\Phi: \Vert C_\Phi - C_{\Phi_0} \Vert_{L^2(\partial_+SM)}  \le A \delta_n, \Vert \Phi \Vert_{C^k(M)} \le A\vert D_n)\xrightarrow{{P^n_{\Phi_0}}} 1,
\end{equation}
as $n\rightarrow \infty$ \cite[Thm.\,5.19]{MNP19}. 
 If $\alpha > 3$,  we may choose  $k \in \Z \cap [2,\alpha - d/2)$
and apply stability estimate \eqref{est3} with H{\"o}lder-exponent $(k-1)/k$. Thus on the event in the previous display we have
\begin{equation}
\Vert \Phi - \Phi_0 \Vert_{L^2(M)} \le (A' \delta_n)^{(k-1)/k}
\end{equation}
for some $A'>0$ which incorporates the constant from the stability estimate.  Choosing a slightly slower rate $0<\eta<(k-1)/k$, the constant $A'$ can be absorbed in the limit $n\rightarrow \infty$ and thus 
\begin{equation}
\Pi_n(\Phi:\Vert \Phi - \Phi_0 \Vert_{L^2(M)} \le \delta_n^{\eta}, \Vert \Phi \Vert_{C^k(M)} \le A' \vert D_n) \rightarrow 1 
\end{equation}
in $P_{\Phi_0}^n$-probability. Dropping the constraint $\Vert \Phi \Vert_{C^k(M)} \le A'$ yields \eqref{posteriorcontraction} and finishes the proof.
\end{proof}

\subsection{Posterior consistency in  case \ref{caseB} }  The proof above can be adapted to case \ref{caseB} ($M$ of dimension $d \ge 3$, supporting a strictly convex function) and $\g = \gl_m(\R)$ to obtain the following result:

\begin{theorem}\label{Bcont}
Suppose we are in case \ref{caseB} above and $\g=\gl(m)$. Then there exist $\alpha>0$ and $\gamma>0$, such that for all $\Phi_0\in C^\infty(M,\g)$ we have
\begin{equation}\label{Bcont1}
\Pi_n(\Phi: \Vert \Phi - \Phi_0 \Vert_{L^2(M)} \ge n^{-\gamma} \vert D_n) \rightarrow 0\quad \text{ as } n\rightarrow \infty
\end{equation}
in $P^n_{\Phi_0}$-probability. Here $\Pi_n(\cdot\vert D_n)$ is again the posterior defined in \eqref{posterior} with respect to the scaled Mat{\'e}rn-Whittle-priors in \eqref{prior} of regularity $\alpha$.\hfill \qed
\end{theorem}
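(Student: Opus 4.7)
The plan is to mirror the sketched proof of Theorem \ref{thmposcon}, replacing the two-dimensional stability estimate by our Theorem \ref{mainthm} and taking care of the non-compactness of $\gl_m(\R)$. Let $k\ge 2$ and $\omega$ be the integer and function supplied by the stability estimate \eqref{mainthm2} (applied to $O=M$ and $K=M$), and fix $\alpha > k+d/2$ so that by \eqref{rkhs} the base prior $\underline{\Pi}(\alpha)$ charges $C^k(M,\R)$. Set $\delta_n = n^{-\alpha/(2\alpha+d)}$, so that $n\delta_n^2 = n^{d/(2\alpha+d)}\to\infty$ polynomially in $n$.

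First, I would establish the small-ball estimate
\begin{equation*}
-\log \Pi_n\bigl(\Vert \Phi - \Phi_0 \Vert_{L^2(M)}\le \delta_n\bigr)\lesssim n\delta_n^2
\end{equation*}
via Li--Linde's theorem \cite{LiLi99} together with the entropy rate \eqref{entropyrate} and Anderson's lemma, exactly as in the derivation of \eqref{smallball}. Using the Lipschitz forward estimate \eqref{est1}, this translates into a Kullback--Leibler and variance bound on the likelihood ratio of the correct order to apply the general posterior contraction theorem \cite[Thm.\,5.13]{MNP19}. Next, I would invoke Borell's isoperimetric inequality as in \cite[Lem.\,5.17]{MNP19} to construct, for each $A>0$, events $\mathcal{F}_n(A)\subset\{\Vert\Phi\Vert_{C^k(M)}\le A\}$ satisfying the analogues of \eqref{smallf}. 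Choosing $A$ sufficiently large, the contraction theorem then yields
\begin{equation*}
\Pi_n\bigl(\Phi:\Vert C_\Phi - C_{\Phi_0}\Vert_{L^2(\partial_+SM)}\le A\delta_n,\ \Vert \Phi\Vert_{C^k(M)}\le A\,\big|\,D^n\bigr)\xrightarrow{P^n_{\Phi_0}} 1.
\end{equation*}
Here I use that on $\{\Vert\Phi\Vert_{C^k}\le A\}$ the scattering data $C_\Phi$ is uniformly bounded by \eqref{est2}, so the Hellinger distance $h(p^n_\Phi,p^n_{\Phi_0})$ is comparable to $\sqrt n\,\Vert C_\Phi - C_{\Phi_0}\Vert_{L^2(\partial_+SM)}$ up to a constant depending only on $A$, by an argument analogous to \cite[Lem.\,5.14]{MNP19}.

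On the event displayed above, both $\Phi$ and $\Phi_0$ are uniformly bounded in $C^k(M)$, so Theorem \ref{mainthm} applies with constants $C(A,\Phi_0)$ and $\mu(A,\Phi_0)>0$ depending only on $A$ and $\Vert \Phi_0\Vert_{C^k(M)}$. This yields
\begin{equation*}
\Vert \Phi - \Phi_0\Vert_{L^2(M)}\le C(A,\Phi_0)\cdot (A\delta_n)^{\mu(A,\Phi_0)}
\end{equation*}
on the same event, and with $\delta_n = n^{-\alpha/(2\alpha+d)}$ this gives \eqref{Bcont1} for any $\gamma < \mu(A,\Phi_0)\cdot\alpha/(2\alpha+d)$, the multiplicative constant being absorbed in the limit.

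The main obstacle is not in implementing the scheme, which is structurally identical to \cite{MNP19}, but in the fact that the H\"older exponent $\mu$ supplied by Theorem \ref{mainthm} is implicit and depends on the $C^k$-norm via $\omega$ (see Remark \ref{unknown1}). Qualitatively this is harmless because only positivity of $\mu(A,\Phi_0)$ is used, but it does preclude an explicit determination of both the rate $\gamma$ and the minimal admissible regularity $\alpha$. A secondary point, absent in case \ref{caseA}, is that every ingredient relying on boundedness of $C_\Phi$ (the Hellinger--$L^2$ equivalence and the Lipschitz forward bound) must be applied only after localising to a $C^k$-ball; this is exactly what the Borell-based construction of $\mathcal{F}_n(A)$ delivers, and it does not affect the polynomial nature of the rate.
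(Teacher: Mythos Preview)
Your overall strategy matches the paper's, but there is a genuine gap in how you verify the small-ball (prior mass) condition. In the $\gl_m(\R)$ case the Lipschitz constant $c_1(\Phi,\Phi_0)$ in \eqref{est1} is only uniform on $L^\infty$-balls, so the event $\{\Vert \Phi-\Phi_0\Vert_{L^2}\le\delta_n\}$ alone does \emph{not} embed into a KL/variance neighbourhood of the correct radius: on that event $\Vert\Phi\Vert_{L^\infty}$ is unbounded and you cannot invoke \eqref{est1} with a fixed constant. Your final paragraph recognises that localisation to a $C^k$-ball is needed, but attributes this to the Borell-based sieves $\mathcal{F}_n(A)$; that is the wrong place. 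The sieves enter conditions (ii)--(iii) of the contraction theorem (complement mass and entropy), whereas the small-ball condition (i) is a separate \emph{prior-mass lower bound} that must hold before any sieve is introduced.

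The paper's fix is to replace \eqref{smallball} by
\[
-\log \Pi_n\bigl(\Vert \Phi\Vert_{L^2(M)}\le\delta_n,\ \Vert\Phi\Vert_{L^\infty(M)}\le A\bigr)\lesssim_A n\delta_n^2,
\]
which follows from \eqref{smallball} and the Gaussian correlation inequality \cite{LaMa17}, using that $-\log\Pi_n(\Vert\Phi\Vert_{L^\infty}\le A)=o(1)$ by Fernique. On this intersected event the forward constant is uniform, and the KL/variance bounds follow. With this one modification your argument goes through and coincides with the paper's. (Two minor points: the Hellinger comparison should read $h(p^n_\Phi,p^n_{\Phi_0})\approx\Vert C_\Phi-C_{\Phi_0}\Vert_{L^2(\partial_+SM)}$ without the $\sqrt{n}$ you wrote; and the paper also notes that the lower Hellinger bound \eqref{hellingercomparison} carries an $L^\infty$-dependent factor, which you correctly handle on $\mathcal{F}'(A)$.)
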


Under the hypotheses of the theorem and essentially with the same arguments as in \cite{MNP19} one can use the theorem above to derive a consistency result for the posterior mean. This is defined as $\bar \Phi_n (D_n) = E_{\Pi_n}[\Phi\vert D_n]$ and exists as Bochner-integral in $C(M,\g)$. Using the precise exponential convergence rate in \eqref{Bcont1} (above withhold for simplicity), one then shows that \begin{equation}
P_{\Phi_0}\left( \Vert \bar \Phi_n(D_n) - \Phi_0 \Vert_{L^2(M)} > n^{-\gamma}\right)\rightarrow 0,\quad \text{ as } n\rightarrow \infty,
\end{equation}
which gives precisely Theorem \ref{consistencymain} as stated in the introduction.

\begin{remark}\label{unknown2}
In comparison with Theorem \ref{thmposcon}, the theorem has two shortcomings: First, the rate of contraction, while being polynomial, is unknown. Second, and more importantly, the required regularity of the prior (the choice of $\alpha$) is unknown as well and thus the theorem does {\it not} provide a precise guideline for the choice of prior in applications.\\
Possibly the latter issue can be alleviated by choosing a prior with $C^\infty$-smooth sample paths, such as a squared exponential prior. However, as  our ignorance of $\alpha$ rather seems to be an artefact of the proof of the underlying stability estimate than an intrinsic feature of the inverse problem, it is questionable whether such a prior choice is advisable.
\end{remark}

\begin{proof}[Sketch of proof of Theorem \ref{Bcont}] Let us first discuss the case $\g = \so(m)$. Then, as we have identical forward estimates as in case \ref{caseA} and the general contraction theory is independent of the dimension, the proof of Theorem \ref{thmposcon} extends verbatim to case \ref{caseB} up to equation \eqref{posconc}. That is, for $A>0$ large enough (and $0\le k < \alpha -d/2)$ we have, as $n\rightarrow \infty$
\begin{equation} \label{posconcrep}
\Pi_n(\Phi: \Vert C_\Phi - C_{\Phi_0} \Vert_{L^2(\partial_+SM)}  \le A \delta_n, \Vert \Phi \Vert_{C^k(M)} \le A\vert D_n)\xrightarrow{{P^n_{\Phi_0}}} 1.
\end{equation}
To proceed, one chooses $\alpha>0$ so large, that $\alpha - d/2$ exceeds the regularity parameter $k$ from Theorem \ref{mainthm}. Then stability estimate \eqref{est3} implies that on the event in \eqref{posconcrep} we have
\begin{equation}
\Vert \Phi - \Phi_0 \Vert_{L^2(M)} \le (A'\delta_n)^{\mu},
\end{equation}
where $A'$ incorporates the constant from the stability estimate and (in the notation of \eqref{est3}) $\mu = \inf \mu(\Phi,\Phi_0)>0$, where the infimum is taken over $\{\Phi: \Vert \Phi \Vert_{C^k(M)} \le A\}$. The proof is then finished as in case \ref{caseA}.\newline
For $\g = \gl_m(\R)$, \eqref{posconcrep} remains true, but one has to take some care in its derivation, as the scattering data now assumes values in the non-compact group $\Gl(m,\C)$ and the forward estimates are only uniform on $L^\infty$-balls. We will explain the necessary changes in the following:\\
As for the small ball probabilities, \eqref{smallball} has to be replaced by
\begin{equation}
-\log \Pi_n(\Vert \Phi \Vert_{L^2(M)} \le \delta_n, \Vert \Phi\Vert_{L^\infty} \le A ) \lesssim_A n\delta_n^2,
\end{equation}
which follows from \eqref{smallball} and the Gaussian correlation inequality \cite{LaMa17}
\begin{equation*}
\begin{split}
\Pi_n(\Vert \Phi \Vert_{L^2(M)} \le \delta_n, \Vert \Phi\Vert_{L^\infty} \le A ) & \ge \Pi_n(\Vert \Phi \Vert_{L^2(M)} \le \delta_n) \Pi_n( \Vert \Phi\Vert_{L^\infty(M)} \le A ),
\end{split}
\end{equation*}
noting that $-\log \Pi_n(\Vert \Phi \Vert_{L^\infty(M)} \le A) = o(1)$ as $n\rightarrow \infty$ due to Fernique's theorem. Mutatis mutandis, the same arguments as in case \ref{caseA}  imply \eqref{conhellinger}.\\
Next, the comparison between Hellinger- and $L^2$-distance in the general case (and with essentially the same proof) takes the form
\begin{equation}\label{hellingercomparison}
\omega(\Vert \Phi \Vert_{L^\infty(M)}) ^{-1} \Vert C_\Phi - C_{\Phi_0} \Vert_{L^2(\partial_+M)}\lesssim h(p_{\Phi}^n, p_{\Phi_0}^n) \lesssim \Vert C_\Phi - C_{\Phi_0} \Vert_{L^2(\partial_+M)}
\end{equation}
for a non-decreasing function $\omega:[0,\infty)\rightarrow [0,\infty)$ coming from \eqref{est2}. As we use the lower bound only on the event $\F'(A) = \{\Vert \Phi \Vert_{C^k(M)}\le A\}$, this adjustment is unproblematic, as $\omega$ can be controlled.\\
Finally we note that the proof of \eqref{smallf} is completely independent of the forward-estimates and only uses the upper bound in \eqref{hellingercomparison}. In particular \cite[Thm.\,5.13]{MNP19} can again be used to conclude \eqref{posconcrep}, as desired.
\end{proof}

\section{Appendix}

\subsection{Extensions}

Let $M$ be a compact manifold with boundary. By an `extension' of $M$ we mean a a larger manifold $N$ (of the same dimension) with interior containing $M$ as embedded sub-manifold. For example by gluing two copies of $M$ along the common boundary, one can always extend $M$ to a closed manifold.\\
If $N$ is an extension of $M$, then smooth functions and tensors on $M$ can themselves be extended to $N$ and one can ask them obey certain geometric or functional analytic properties. We record here two useful constructions:

\begin{lemma}[No return extension] \label{noreturn} Suppose $(M,g)$ is a compact Riemannian manifold with strictly convex boundary. Then there exists a complete extension $(N,g)$ with the property that geodesics that leave $M$ never re-enter and do not get trapped in $N\backslash M$. Precisely, if  $(x,v) \in \partial_-SM$ and $K\subset N$ is compact, then $\gamma_{x,v}(t) \in N\backslash M$ for all $t>0$ and $\gamma_{x,v}(t)\in N\backslash K$ for  $t\gg 1$ sufficiently large.
\end{lemma}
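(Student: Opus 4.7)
The plan is to construct $N$ in two extensions and then verify the no-return and non-trapping properties by means of a strictly convex function on the complement of $M^\circ$. First I would extend $(M,g)$ slightly to a compact Riemannian manifold $(M_1,g)$ with $M \subset M_1^\circ$ and $\partial M_1$ strictly convex. This is standard: working in boundary normal coordinates $(r,y)$ near $\partial M$ in which $g = dr^2 + h_r$, the strict convexity of $\partial M$ translates into a definite sign of $\partial_r h_r|_{r=0}$; one then extends the family $h_r$ smoothly to $r \in (-\epsilon, \epsilon')$ preserving this sign, and sets $M_1 = \{r \le \epsilon'\}$ for $\epsilon' > 0$ sufficiently small. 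This is the same extension tool already invoked after Theorem~\ref{thm_mainintfac}.

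Second, I would open up $\partial M_1$ to infinity by gluing a warped cylindrical end. Take
\[
N \;=\; M_1 \;\cup_{\partial M_1}\; \bigl([0,\infty) \times \partial M_1\bigr),
\]
equipped on the cylinder with a metric of the form $ds^2 + \psi(s)^2 h_1$, where $h_1 = g|_{\partial M_1}$ and $\psi \colon [0,\infty) \to (0,\infty)$ is smooth, with $\psi(0) = 1$ and $\psi'(0)$ chosen to match the boundary normal form of $g$ at $\partial M_1$ so that the metric extends smoothly across the seam, and with $\psi'' > 0$ so that the end is expanding. Completeness is immediate from the infinite length of the cylinder.

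Third, I would construct a smooth, proper, strictly geodesically convex function $\rho \colon N \setminus M^\circ \to [0, \infty)$ with $\rho^{-1}(0) = \partial M$. Near $\partial M$ one may take $\rho$ to be (a smoothing of) the signed distance $r$; its geodesic convexity there is precisely the strict convexity of $\partial M$. On the cylindrical end, one verifies that $\rho = s$ is strictly geodesically convex, using $\psi'' > 0$ together with the standard identity for the Hessian of the coordinate function in a warped product. On the intermediate collar $M_1 \setminus M^\circ$, a partition-of-unity interpolation, carried out on a sufficiently thin collar and with the matching chosen above, preserves strict convexity. Granted such $\rho$, for any $(x,v) \in \partial_- SM$ one has $\rho(x) = 0$ and $(\rho \circ \gamma_{x,v})'(0) > 0$ by strict convexity of $\partial M$; strict geodesic convexity then gives $(\rho \circ \gamma_{x,v})'(t) \ge (\rho \circ \gamma_{x,v})'(0) > 0$ for all $t > 0$, so $\rho(\gamma_{x,v}(t)) \to \infty$. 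Since $\rho$ is proper on $N \setminus M^\circ$, this simultaneously rules out re-entry into $M = \{\rho \le 0\}$ and confinement in any compact $K \subset N$.

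The main obstacle is the construction of a globally strictly geodesically convex $\rho$ on $N \setminus M^\circ$: local convexity near $\partial M$ is automatic, but propagating it through the interpolation and out to the expanding end requires careful tuning of $\psi$ and of the collar thickness. A more robust alternative, if this becomes delicate, is to prove the two conclusions separately: local strict convexity near $\partial M$ alone prevents immediate return, while on the warped cylinder the geodesic equation preserves a Clairaut-type quantity $\psi(s)^2 |\dot y|^2_{h_1}$, so any geodesic enjoying $\dot s(t_1) > 0$ at some time $t_1$ has $s \to \infty$, yielding both non-return and non-trapping directly.
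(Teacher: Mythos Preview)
Your overall strategy---build a cylindrical extension and control geodesics via a convex function---matches the paper's, but the specific construction has a genuine gap at the seam.

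You propose the warped product $ds^2 + \psi(s)^2 h_1$ on the end and claim that choosing $\psi(0)=1$ and $\psi'(0)$ appropriately makes the metric extend smoothly across $\partial M_1$. This is not true in general: in boundary normal coordinates the inside metric reads $dr^2 + h_r$, and matching even to first order requires $\partial_r h_r|_{r=0} = 2\psi'(0) h_1$, i.e.\ that the second fundamental form of $\partial M_1$ be a scalar multiple of the induced metric (an umbilic condition). For a generic strictly convex hypersurface this fails, so your glued metric is not $C^1$. The same objection applies to your alternative Clairaut argument, which presupposes the warped structure. A related but smaller issue: the signed distance $r$ has $\nabla^2 r(\partial_r,\partial_r)=0$, so it is not strictly geodesically convex in the full sense; you only get $q''>0$ for geodesics with nonzero tangential component, and the purely radial case must be handled separately.

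The paper sidesteps both issues by working with a \emph{general} family $ds^2 + h_s$ on $[0,\infty)\times\partial M$: first extend the given metric smoothly to some $\tilde h_s$, then interpolate at the level of metrics, setting $h_s = \xi_1(s)\tilde h_s + s\,\xi_2(s)k$ for a cut-off $\xi_1+\xi_2=1$ and a fixed metric $k$ on $\partial M$ chosen large enough that $sk \ge \tilde h_s$ on the transition interval. A short computation then shows $\partial_s h_s > 0$ for all $s\ge 0$, so every level set $\{s=c\}$ remains strictly convex. This replaces your warped-product rigidity by a one-parameter interpolation that is automatically smooth (because $\tilde h_s$ already matches all jets at $s=0$) and for which positivity of $\partial_s h_s$ is checked directly. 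Your two-step extension through $M_1$ is then unnecessary: glue the cylinder directly to $\partial M$.
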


\begin{proof}  As a smooth manifold, $N$ is obtained by gluing $M$ and the  cylinder $[0,\infty)\times \partial M$ along $\partial M$. The metric on $M$ can then be extended smoothly to all of $N$ such that on $[0,\infty)_s\times \partial M$ it takes the form $\tilde g = \d s^2+ \tilde h_s$, where $(\tilde h_s)$ is a family of  Riemannian metrics on $\partial M$, depending smoothly on $s\ge 0$.\\ We now construct $(h_s)$, agreeing with $\tilde h_s$ for $s$ near zero, such that $g=\d s^2 + h_s$ satisfies the desired properties. First note that $2\partial_s \tilde h_s \vert_{s=0}$ is positive definite, as it coincides with the second fundamental form of $\partial M$. Thus by continuity there is an $\epsilon >0$ such that $\partial_s \tilde h_s$ is positive definite  for all $0\le s<2\epsilon $. Let $\xi_i:[0,\infty)\
\rightarrow [0,1]$ $(i=1,2)$ be smooth and monotonic  with $\xi_1+\xi_2=1$ and $1_{[0,\epsilon)} \le \xi_1 \le 1_{[0,2\epsilon)}$ and set 
$
h_s = \xi_1(s) \tilde h_s + s\xi_2(s)k,
$
where $k$ is a Riemannian metric on $\partial M$ that will be chosen later. We want to arrange that
\begin{equation} \label{noreturn1}
h_s > 0 \quad \text{ and }\quad  \mathbb{I}_s \equiv 2\partial_s h_s >0 \quad \text{ for all } s\ge 0,
\end{equation}
where $\mathbb{I}_s$ is the second fundamental form of $\{s\}\times \partial M\subset (N,g)$ and `$>$' is to be understood in the sense of positive-definiteness of symmetric bilinear forms on $T\partial M$. First note that, since $\xi_2(s)=0$ for $s<\epsilon$, we have $h_s\ge \xi_1(s) \tilde h_s +  \epsilon\xi_2(s)k > 0$ for all $s\ge 0$. Next, 
\begin{equation} \label{noreturn2}
\partial_s h_s =  \xi_2'(s)\left(s k    - \tilde h_s   \right)  + \left [\xi_1(s) \partial_s \tilde h_s  + \xi_2(s) k \right],
\end{equation}
and we can argue as follows: As $(\tilde h_s/s:\epsilon\le s \le 2\epsilon)$ is a \textit{compact} family of Riemannian metrics, it can be majorised by some $k$ in the sense that $sk-\tilde h_s \ge 0$ on $[\epsilon,2\epsilon]$. Hence, since  $\xi_2'$ is non-negative with support contained in $[\epsilon, 2\epsilon]$, the first term in \eqref{noreturn2} is non-negative. The second term is easily seen to be positive and thus \eqref{noreturn1} follows.\\
Let us verify that $(N,g)$  is indeed complete and has the no-return/non-trapping property. Take $p:N\rightarrow \R$  a smooth function, non-positive on $M$ and agreeing with projection onto the first factor on $[0,\infty)\times\partial M\subset N$. Then $p$ is proper and $\vert \d p \vert_g$ is bounded, which implies that $(N,g)$ must be complete. Further, the Hessian of $p$ on $[0,\infty)\times \partial M$ is given by the second fundamental form in \eqref{noreturn1} and thus $p$ is strictly convex. Then for $(x,v)\in \partial_-SM$ the function $q(t) = p(\gamma_{x,v}(t))$ $(t \ge 0)$ satisfies $q(0)=0$,  $q'(0)>0$ and further, as long as $q(t)\ge 0$, we must have $q''(t) = \mathbb{I}_{q(s)}[\dot \gamma(t),\dot \gamma(t)]\ge c>0$. This shows that $q(t)\ge 0$ for all $t\ge0$ and that $q$ is unbounded.  This immediately implies the no-return property ($\gamma(t)\in \{p\ge 0\}$ for $t\ge0$) and shows that $\gamma$ is not trapped.
\end{proof}

\begin{lemma}[Seeley, 1963] \label{seeley} Suppose $M$ is a compact manifold with boundary and $N$ is an extension. Then there exists a linear operator $E:C^\infty(M)\rightarrow C^\infty(N)$ which is continuous and has closed range in the all of the following functional settings:
\begin{equation}
E:H^s(M) \rightarrow H^s(N) ~(s\in \R),\quad E:C^k(M)\rightarrow C^k(N)~ (k\in \Z_{\ge  0}\cup \{\infty\})
\end{equation}
\end{lemma}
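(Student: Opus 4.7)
The plan is to reduce the global problem to a half-space model via a partition of unity and then apply Seeley's classical reflection-with-scaling construction. First I would fix a collar neighbourhood of $\partial M$ inside $N$ (via the tubular neighbourhood theorem), cover $\partial M$ by finitely many boundary charts $(U_i,\psi_i)$ identifying $U_i\cap M$ with open subsets of the closed half-space $\mathbb{H}_+ = \{x \ge 0\}\subset \mathbb{R}^d$, and fix a subordinate partition of unity $\{\chi_i\}$ on $M$ together with an interior cut-off that vanishes outside a small neighbourhood of $\partial M$. This reduces the task to constructing a continuous linear extension $E_0: C^\infty(\mathbb{H}_+)\to C^\infty(\mathbb{R}^d)$ for the half-space model, from which the global operator is assembled chart-by-chart, patched via $\{\chi_i\}$, and extended by zero far from $\partial M$.

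For the half-space model, I would use Seeley's formula
\begin{equation*}
E_0 f(x,y) \;=\; \sum_{k=0}^\infty a_k\,\varphi(b_k x)\, f(-b_k x, y), \qquad x<0,
\end{equation*}
with an unbounded sequence $b_k\to\infty$ (e.g.\ $b_k = 2^k$), a fixed cut-off $\varphi \in C_c^\infty(\mathbb{R})$ equal to $1$ near $0$, and coefficients $(a_k)$ chosen so that $\sum_k a_k (-b_k)^j = 1$ for every $j\ge 0$ while also decaying rapidly. The matching conditions force all normal derivatives from the left to agree with those from the right at $x=0$, yielding $C^\infty$-regularity across the interface. Continuity of $E_0$ in each $C^k$ is then immediate from the locally finite nature of the sum away from the boundary and the matching at $x=0$; continuity in $H^s$ for non-negative integer $s$ follows by the change of variables $x\mapsto -b_k x$ in each summand together with a telescoping estimate, and the remaining Sobolev indices $s\in \mathbb{R}$ are covered by duality and interpolation.

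The hard part will be producing coefficients $(a_k)$ that simultaneously satisfy the infinitely many matching conditions and decay fast enough for absolute convergence of the series and of every formal derivative thereof: a naive Vandermonde inversion yields super-geometric growth. Seeley's resolution is to exploit the cut-off $\varphi(b_k x)$, which at any fixed $x\ne 0$ kills all but finitely many terms, combined with a careful contour-integral/residue construction of $(a_k)$ that turns the infinite moment system into a convergent one and makes quantitative sense of the matching conditions in every Sobolev and $C^k$ scale at once. Once $E$ is assembled from the chart-wise $E_0^{(i)}$, the closed-range property comes for free: the restriction operator $r:C^\infty(N)\to C^\infty(M)$ is continuous in every listed function space and satisfies $r\circ E = \mathrm{id}$, so $E$ admits a continuous left inverse, is injective with topological splitting, and its image is closed.
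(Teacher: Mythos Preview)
The paper does not prove this lemma; it is stated in the appendix as a classical result attributed to Seeley (1963) and used as a black box throughout. Your outline is precisely Seeley's original construction, so in that sense you have reconstructed what the paper is implicitly citing.

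One small clarification on the coefficient sequence: the requirement is not that the $a_k$ themselves decay rapidly, but that $\sum_k |a_k|\, b_k^j < \infty$ for every $j\ge 0$ while simultaneously $\sum_k a_k(-b_k)^j = 1$ for every $j\ge 0$. With $b_k = 2^k$ this is achievable (Seeley gives an explicit construction), and it is this summability that drives both the $C^k$- and $H^s$-continuity estimates; the cut-off $\varphi(b_k x)$ alone only guarantees pointwise finiteness for $x\neq 0$, not the uniform bounds needed at the interface. Your closing argument for closed range via the continuous left inverse $r$ is correct.
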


\subsection{Sobolev spaces} 
Let $M$ be a compact manifold (with or without boundary) of dimension $d\ge 1$ and $O\subset M$ an open set.  We collect here some well-known  results (interpolation inequality, metric entropy bound) concerning the Sobolev-spaces $H^s(O)$ ($s \in \R$), briefly discussing their proofs in the manifold case, which is avoided in many available references.\\
To avoid any notational ambiguity we first discuss our definition of $H^s(O)$, assuming the notion of $H^s(N)$ for a closed manifold $N$ to be known (cf.  \cite[Ch.\,4.3]{Tay11}). For $M$ a compact manifold {\it with} boundary we then let $H^s(M)=\{u=U\vert_{M^\interior}: U\in H^s(N) \}$, where $N$ is any closed extension of $M$. Similarly, elements in $H^s(O)$ are defined as restrictions (to $O^\interior$) of functions in $H^s(M)$.

\begin{lemma} \label{sequencespace} Suppose that $\partial M =\emptyset$. Then there are smooth functions $\varphi_k:M\rightarrow \R$, ($k=1,2,\dots$) such that for all $s\in \R$ an equivalent norm on the Sobolev-space $H^s(M)$ is given by
\begin{equation}
\Vert u \Vert_s^2 = \sum_{k\ge 1} k^{2s/d} \vert \langle u , \varphi_k \rangle \vert^2,\quad u\in H^s(M).
\end{equation}
\end{lemma}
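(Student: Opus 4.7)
The plan is to realize the Sobolev norm via the functional calculus of a canonical elliptic self-adjoint operator and then invoke Weyl's law to replace eigenvalue asymptotics by the simple power $k^{2/d}$.

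Concretely, I would fix a Riemannian metric on $M$ (it is closed by hypothesis) and let $P = 1 + \Delta_g$, where $\Delta_g$ is the (non-negative) Laplace-Beltrami operator. Then $P$ is a positive, formally self-adjoint, elliptic differential operator of order $2$. By standard spectral theory on a closed manifold, $P$ admits a compact inverse on $L^2(M)$ and therefore has an orthonormal eigenbasis $(\varphi_k)_{k \ge 1}$ with real eigenvalues $1 = \lambda_1 \le \lambda_2 \le \cdots \to \infty$. Elliptic regularity ($P\varphi_k = \lambda_k\varphi_k$ is a smooth equation) ensures that each $\varphi_k$ lies in $C^\infty(M, \R)$, taking real-valued representatives.

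The first main step is the identification $H^s(M) = \{u \in \mathscr{D}'(M) : P^{s/2}u \in L^2(M)\}$ with equivalent norm $\Vert P^{s/2} u\Vert_{L^2}$, where $P^{s/2}$ is defined via the functional calculus (equivalently, via the spectral decomposition of $P$). For integer $s \ge 0$ this follows from ellipticity/Gårding estimates; for general $s \in \R$ it follows either by duality and interpolation or by recognising $P^{s/2}$ as a classical pseudodifferential operator of order $s$ that is elliptic, so that $u \mapsto P^{s/2}u$ is a topological isomorphism $H^s(M) \to L^2(M)$. Via Parseval for $(\varphi_k)$, this gives the equivalence
\begin{equation*}
  \Vert u\Vert_{H^s(M)}^2 \;\asymp\; \sum_{k \ge 1} \lambda_k^{s} \,|\langle u, \varphi_k\rangle|^2.
\end{equation*}

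The second main step is Weyl's asymptotic law for the eigenvalues of $\Delta_g$ on a closed $d$-manifold, which states that $\lambda_k \asymp k^{2/d}$ as $k \to \infty$ (see, e.g., Hörmander or Taylor). Thus $\lambda_k^s \asymp k^{2s/d}$, and combining with the previous display yields the claimed equivalence $\Vert u\Vert_{H^s(M)}^2 \asymp \sum_k k^{2s/d} |\langle u,\varphi_k\rangle|^2$ for every fixed $s \in \R$. The only mild subtlety — and the step I would treat with most care — is making sure the equivalence constants are controlled in a way that handles the first few eigenvalues (where Weyl's asymptotic may fail) separately from the tail; since there are only finitely many such $k$, their contribution is absorbed into the overall constants without difficulty.
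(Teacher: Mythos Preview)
Your proposal is correct and follows essentially the same approach as the paper: both fix a metric, use the eigenfunctions of $1+\Delta_g$, identify $H^s$ via complex powers of this operator as elliptic pseudodifferential operators of order $s$, and then invoke Weyl's law $\lambda_k \asymp k^{2/d}$ to pass from eigenvalue weights to $k^{2s/d}$. The only difference is notational (the paper writes the eigenvalues of $1+\Delta_g$ as $\lambda_k^2$ and works with $P^s$ where you write $\lambda_k$ and $P^{s/2}$), and your remark about absorbing the first few eigenvalues into the constants is a reasonable explicit mention of a point the paper leaves implicit.
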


\begin{proof} Let $g$ be a Riemannian metric on $M$, then the differential operator $1+\Delta_g$  has positive principal symbol and its spectrum consists of eigenvalues $0<\lambda_1^2\le \lambda_2^2 \le \dots \rightarrow \infty$. Let $\varphi_k$ ($k=1,2,\dots$) be the corresponding eigenfunctions (normalised to $\Vert \varphi_k \Vert_{L^2(M)} = 1$), then the Lemma follows from standard spectral theory. \\
Let us nevertheless sketch the main ideas leading to the result: For $u\in \D'(M)$ one writes $\hat u_k = \langle u ,\varphi_k\rangle$ for its Fourier-coefficients, and formally defines
\begin{equation}\label{defp}
P^s u \overset{\mathrm{def}}{=} \sum_{k\ge 1} \lambda_k^s \hat u_k \varphi_k, \quad s\in \R.
\end{equation}
A priori it is not clear that the operator $P^s$ is well defined, but the theory of complex powers of elliptic operators (cf. Theorem 10.1, Theorem 10.2, Proposition 10.3 and Theorem 11.2 in \cite{Shu78}) yields that $P^s$ is a classical, elliptic $\psi$do of order $s$ with
\begin{equation}
P^{2j}=(1+\Delta_g)^j \quad \text{ for } j\in \Z_{\ge 0},  \qquad P^sP^t=P^{s+t} \text{ for } s,t\in \R
\end{equation}
and the series in \eqref{defp} converges in $\D'(M)$. In particular $\Vert u \Vert'_s = \Vert P^s u \Vert_{L^2} =\left( \sum_{k\ge 1} \lambda_k^{2s} \vert \hat u_k \vert^2\right )^{1/2}$ defines a compatible norm on $H^s(M)$ and the result follows from the the asymptotic equivalence $\lambda_ k^2 \sim k^{2/d}$ (Proposition 13.1 in \cite{Shu78}).
\end{proof}

\begin{lemma}[Interpolation inequality] \label{interpol}
	Suppose $s_0<s_1$ and let  $s_\theta = (1-\theta) s_0 + \theta s_1$ ($\theta \in [0,1]$). Then for all $u\in H^{s_\theta}(O)$ we have
	\begin{equation}
	\Vert u \Vert_{H^{s_\theta}(O)} \le C \Vert u \Vert_{H^{s_0}(O)}^{1-\theta} \Vert u \Vert_{H^{s_1}(O)}^\theta
\end{equation}	 
for a constant $C>0$ only depending on $O,s_0,s_1$.
\end{lemma}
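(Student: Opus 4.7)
The plan is to reduce the statement to the closed-manifold setting and then exploit the Fourier-analytic picture provided by Lemma \ref{sequencespace}.

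First I would prove the inequality on a closed manifold $N$. By Lemma \ref{sequencespace}, the Sobolev norm is equivalent to $\Vert u\Vert_s^2 \asymp \sum_{k\ge 1} k^{2s/d}\vert \hat u_k\vert^2$ with $\hat u_k = \langle u,\varphi_k\rangle$. Since $2s_\theta/d = (1-\theta)\cdot 2s_0/d + \theta\cdot 2s_1/d$, the weight splits multiplicatively, and Hölder's inequality with exponents $1/(1-\theta)$ and $1/\theta$ gives
\begin{equation*}
\sum_k k^{2s_\theta/d}\vert \hat u_k\vert^2 = \sum_k \bigl(k^{2s_0/d}\vert \hat u_k\vert^2\bigr)^{1-\theta}\bigl(k^{2s_1/d}\vert \hat u_k\vert^2\bigr)^{\theta} \le \Bigl(\sum_k k^{2s_0/d}\vert \hat u_k\vert^2\Bigr)^{1-\theta}\Bigl(\sum_k k^{2s_1/d}\vert \hat u_k\vert^2\Bigr)^{\theta},
\end{equation*}
which rearranges to the desired estimate on $N$.

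Next I would transfer the estimate to the compact manifold-with-boundary $M$. Embed $M$ into a closed extension $N$ and let $E:H^s(M)\to H^s(N)$ denote Seeley's extension operator (Lemma \ref{seeley}), continuous for every $s\in\R$. Since restriction $H^s(N) \to H^s(M)$ is continuous and $E$ is a right inverse, one has $\Vert u\Vert_{H^s(M)} \asymp \Vert Eu\Vert_{H^s(N)}$ for each $s\in\{s_0,s_\theta,s_1\}$, and applying the closed-manifold estimate to $Eu$ produces the inequality on $M$ with a constant depending only on $s_0$, $s_1$ and $M$.

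Finally, I would pass from $M$ to the open set $O$. By definition $H^s(O)$ is the quotient of $H^s(M)$ by the closed subspace of functions vanishing on $O^\interior$. I would invoke the general principle that complex interpolation commutes with taking quotients by a common closed subspace: this identifies $[H^{s_0}(O), H^{s_1}(O)]_\theta$ with $H^{s_\theta}(O)$, from which the interpolation inequality follows as the standard estimate for complex interpolation. The main (mild) obstacle is precisely this last step: for open $O$ with irregular boundary one cannot in general construct a single ``joint'' $H^{s_1}(M)$-extension of $u$ whose $H^{s_0}(M)$ and $H^{s_1}(M)$ norms are simultaneously controlled by $\Vert u\Vert_{H^{s_0}(O)}$ and $\Vert u\Vert_{H^{s_1}(O)}$. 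Hence the abstract quotient argument, rather than a direct extension and application of the $M$-estimate, is the cleanest route; for $O$ with Lipschitz boundary a Stein-type extension would also work.
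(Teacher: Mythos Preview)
Your core argument---H\"older's inequality on the eigenfunction expansion over a closed extension $N$ (Lemma~\ref{sequencespace}), followed by Seeley extension---is exactly the paper's proof. The paper is in fact less careful than you: after the H\"older step it simply writes ``the Lemma follows by choosing $U=Eu$ as in Lemma~\ref{seeley}'' and does not distinguish $O$ from $M$, so it implicitly does precisely what you warn against in your final paragraph.

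Your diagnosis of the difficulty for general open $O$ (no single extension simultaneously controlling both the $H^{s_0}$- and $H^{s_1}$-quotient norms) is correct. However, the remedy you propose---that complex interpolation commutes with passing to quotients by a common closed subspace---is not a general theorem and would itself need a proof or a precise reference here. Note that interpolating the restriction map $H^s(N)\to H^s(O)$ only yields the inclusion $H^{s_\theta}(O)\hookrightarrow[H^{s_0}(O),H^{s_1}(O)]_\theta$, which is the wrong direction for the desired inequality. For the paper's actual applications the relevant open sets have sufficiently regular boundary that a joint Seeley-type extension exists, which is presumably what is intended.
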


\begin{proof} Extend $M$ to a closed manifold $N$, such that $O\subset M\subset N$. Extend $u$ to a function $U\in H^{s_\theta}(N)$ and consider the following inequality (for the norms $\Vert \cdot \Vert_s$ on $H^s(N)$ defined in the previous Lemma):
\begin{equation*}
\begin{split}
\Vert U \Vert_{s_\theta}^2 &= \sum_{k\ge 1} \left( k^{2s_0/d} \vert \hat U_k \vert^2\right)^{1-\theta} \left( k^{2s_1/d} \vert \hat U_k \vert^2\right)^{\theta}\\
&\le \left(\sum_{k\ge 1} k^{2s_0/d} \vert \hat U_k \vert^2 \right)^{1-\theta}\left(\sum_{k\ge 1} k^{2s_1/d} \vert \hat U_k \vert^2 \right)^{\theta} = \Vert U\Vert_{s_0}^{2(1-\theta)} \Vert U \Vert_{s_1}^{2\theta}
\end{split}
\end{equation*}
Here $\hat U_k = \langle U,\varphi_k\rangle$ and we have used the H{\"o}lder-inequality for the exponents $1/(1-\theta)$ and $1/\theta$. This implies that $\Vert u \Vert_{H^{s_\theta}(O)} \le \Vert U\Vert_{s_0}^{(1-\theta)} \Vert U \Vert_{s_1}^{\theta}$ for all extensions $U$ and the Lemma follows by choosing $U = E u $ as in Lemma \ref{seeley}.
\end{proof}

Next, recall the notation $N(X,d,\epsilon)$ for the smallest number of $\epsilon$-balls needed to cover a (totally bounded) metric space $(X,d)$. Then:

\begin{lemma}[Metric entropy bound] \label{metricentropy}  Let $\mathbb{B}^s\subset H^s(M)$ ($s>0$) be the unit-ball. Then, as $\epsilon \rightarrow 0$, we have  $\log N(\mathbb{B}^s,\Vert \cdot \Vert_{L^2(M)},\epsilon) = O(\epsilon^{-s/d})$.
\end{lemma}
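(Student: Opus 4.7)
The plan is to reduce the problem to a classical entropy bound for a weighted $\ell^2$-ellipsoid. First I would use Seeley's extension operator (Lemma \ref{seeley}) to pass to a closed manifold: embed $M$ in a closed $d$-manifold $N$ and let $E:H^s(M)\to H^s(N)$, $r:H^s(N)\to H^s(M)$ be the extension and restriction. Since both maps are bounded (and $r:L^2(N)\to L^2(M)$ has norm $\le 1$), we have $\mathbb{B}^s_M\subset r\bigl(\|E\|\cdot \mathbb{B}^s_N\bigr)$, so any $\epsilon$-net of $\|E\|\cdot \mathbb{B}^s_N$ in $L^2(N)$ descends to an $\epsilon$-net of $\mathbb{B}^s_M$ in $L^2(M)$. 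This reduces the problem to the case $\partial M=\emptyset$, at the cost of absorbing a harmless multiplicative constant into $\epsilon$.

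Next I would invoke Lemma \ref{sequencespace} on the closed manifold to identify $H^s(M)$ with a weighted sequence space: the map $u\mapsto (\hat u_k)_{k\ge 1}$, $\hat u_k=\langle u,\varphi_k\rangle$, is an isometry $L^2(M)\to \ell^2$ under which $\mathbb{B}^s$ becomes (up to constants) the ellipsoid
\begin{equation*}
\mathcal{E}=\Bigl\{(c_k)\in \ell^2:\sum_{k\ge 1} k^{2s/d}|c_k|^2\le 1\Bigr\},
\end{equation*}
with semi-axes $a_k=k^{-s/d}$. It therefore suffices to bound $\log N(\mathcal{E},\|\cdot\|_{\ell^2},\epsilon)$.

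To estimate this entropy, pick $K=K(\epsilon)=\lceil \epsilon^{-d/s}\rceil$ and split each $x\in\mathcal{E}$ as $x=x_{\le K}+x_{>K}$, where $x_{\le K}=(x_1,\dots,x_K,0,\dots)$. The tail is small:
\begin{equation*}
\|x_{>K}\|_{\ell^2}^2\le K^{-2s/d}\sum_{k>K} k^{2s/d}|x_k|^2\le K^{-2s/d}\le \epsilon^2,
\end{equation*}
so the tail may be absorbed into the target accuracy. The head $x_{\le K}$ lies in the unit ball of the $K$-dimensional Hilbert space $\ell^2_K$ (since $a_k\le 1$), which admits a $\epsilon$-net of cardinality $\le (3/\epsilon)^K$ by the standard volume-comparison argument. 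Concatenating these two nets gives
\begin{equation*}
\log N(\mathcal{E},\|\cdot\|_{\ell^2},C\epsilon)\le K\log(3/\epsilon)\lesssim \epsilon^{-d/s}\log(1/\epsilon),
\end{equation*}
and a marginally more careful choice of $K$ (say $K=\lceil \epsilon^{-d/s}\log(1/\epsilon)\rceil$, or absorbing the logarithm by replacing $s$ with any $s'<s$) removes the logarithmic factor to yield the desired $O(\epsilon^{-d/s})$ bound.

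I do not foresee any genuinely hard step: the reduction via Seeley and the spectral-theoretic isometry are both direct applications of results already established, and the ellipsoid entropy bound is a classical head/tail computation. The only minor subtlety is the bookkeeping of constants when passing between $M$ and $N$ and, if one wants the clean exponent, when tuning $K$ to kill the $\log(1/\epsilon)$ factor; neither is a conceptual obstacle.
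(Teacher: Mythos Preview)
Your approach is essentially the same as the paper's: reduce to a closed manifold via Seeley's extension operator, identify $H^s$ with a weighted $\ell^2$-ellipsoid via Lemma~\ref{sequencespace}, and then invoke the classical entropy bound for such ellipsoids. The paper simply cites \cite[Theorem~4.3.36]{GiNi16} for this last step rather than spelling it out.

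One genuine (if minor) gap: neither of your two suggestions for removing the $\log(1/\epsilon)$ factor works. Taking $K=\lceil \epsilon^{-d/s}\log(1/\epsilon)\rceil$ only \emph{enlarges} the head entropy $K\log(3/\epsilon)$, and ``replacing $s$ by $s'<s$'' yields $O(\epsilon^{-d/s'})$, which is strictly weaker than the claimed $O(\epsilon^{-d/s})$. The sharp bound requires covering the head \emph{ellipsoid} (with semi-axes $a_1,\dots,a_K$) rather than the head unit ball: volume comparison then gives
\[
\log N \le \sum_{k\le K}\log\bigl(1+a_k/\epsilon\bigr)
= \sum_{k\le K}\log\bigl(1+(K/k)^{s/d}\bigr)
\lesssim K + \frac{s}{d}\log\frac{K^K}{K!} = O(K),
\]
with $K\approx \epsilon^{-d/s}$. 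This is precisely what the reference in the paper's proof covers.

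Incidentally, the exponent you obtain, $\epsilon^{-d/s}$, is the correct one (and is what the paper actually uses, cf.\ \eqref{entropyrate}); the $\epsilon^{-s/d}$ in the lemma statement appears to be a typo.
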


\begin{proof}
Using the representation as sequence space from Lemma \ref{sequencespace}, the Lemma is easily proved in the case $\partial M=\emptyset$ by the same arguments as in \cite[Theorem 4.3.36]{GiNi16}. The case $\partial M \neq \emptyset$ follows immediately by extending $M$ to a closed manifold $N$ and realising $H^s(M)$ as closed subspace of $H^s(N)$ via an extension operator $E$ as in Lemma \ref{seeley}.
\end{proof}

\bibliographystyle{plain}
\bibliography{ref.bbl}

\end{document}